\newcommand{\PreserveBackslash}[1]{\let\temp=\\#1\let\\=\temp}
\newcolumntype{C}[1]{>{\PreserveBackslash\centering}p{#1}}
\newcolumntype{R}[1]{>{\PreserveBackslash\raggedleft}p{#1}}
\newcolumntype{L}[1]{>{\PreserveBackslash\raggedright}p{#1}}
\DeclareMathOperator*{\sgn}{\ensuremath{sgn}}
\DeclareMathOperator*{\vol}{\ensuremath{vol}}
\def\wbar{\accentset{{\cc@style\underline{\mskip8mu}}}}
\renewcommand{\vec}[1]{\mbox{\boldmath \small $#1$}}
\newcommand{\image}{\mathrm{image\,}}
\newcommand{\coloneqq}{{:=}{~}}
\newcommand{\R}{\ensuremath{\mathbb{R}}}
\theoremstyle{plain}
\newtheorem{theorem}{Theorem}[section]
\newtheorem{defn}{Definition}[section]
\newtheorem{lemma}{Lemma}[section]
\newtheorem{remark}{Remark}
\newtheorem{pro}{Proposition}[section]
\begin{document}
\bibliographystyle{unsrt}
\title{Cheeger  inequalities on simplicial complexes
}
\author{J\"urgen Jost  
  \and Dong Zhang }

\date{}
\maketitle

\begin{abstract}

 Cheeger-type inequalities in which the decomposability of a graph and the spectral gap of its Laplacian mutually control each other play an important role in graph theory and network analysis, in particular in the context of expander theory. The natural problem to extend such inequalities to simplicial complexes and their higher order Eckmann Laplacians has been open for a long time. Before proving any inequality, however, one needs to identify the right Cheeger-type constant for which such an inequality can hold.  Here, we solve this problem. Our solution involves and combines constructions from simplicial topology, signed graphs, Gromov filling radii and an interpolation between the standard 2-Laplacians and the analytically more difficult 1-Laplacians, for which, however, the inequalities become equalities. 
It is then natural to develop a general theory for  $p$-Laplacians on simplicial complexes and investigate the related Cheeger-type inequalities. 

\vspace{0.2cm}

\noindent\textbf{Keywords:} Cheeger inequality; simplicial complex; Hodge Laplacian; Eckmann Laplacian; $p$-Laplacian

\noindent\textbf{2020 Mathematics Subject Classification:} 05E45, 58J50, 05A20, 47J10, 55U10
\end{abstract}
\tableofcontents

\tikzstyle{startstop} = [rectangle, rounded corners, minimum width=1cm, minimum height=1cm,text centered, draw=black, fill=red!0]
\tikzstyle{io1} = [rectangle, trapezium left angle=80, trapezium right angle=100, minimum width=1cm, minimum height=1cm, text centered, draw=black, fill=blue!0]
\tikzstyle{io2} = [trapezium,  rounded corners, trapezium left angle=100, trapezium right angle=100, minimum width=1cm, minimum height=1cm, text centered, draw=black, fill=yellow!0]
\tikzstyle{process} = [rectangle, minimum width=1cm, minimum height=1cm, text centered, draw=black, fill=orange!0]
\tikzstyle{decision} = [circle, minimum width=1cm, minimum height=1cm, text centered, draw=black, fill=green!0]
\tikzstyle{decision2} = [ellipse, rounded corners=10mm, minimum width=2cm, minimum height=2cm, text centered, draw=black, fill=green!0]
\tikzstyle{arrow} = [thick,->,>=stealth]

\section{Introduction and Background}\label{sec:introduction}

Generalizing the classical Laplace operator, Laplace-type operators have been
defined for functions on various geometric structures, including domains in
Euclidean space or on Riemannian manifolds, and graphs. From their spectra,
one can  usually extract important information about the underlying
structure. In particular, in a seminal paper \cite{Cheeger70}, Cheeger showed
that the first non-vanishing eigenvalue of the Laplace-Beltrami operator of a
compact Riemannian manifold estimates how difficult it is to decompose the
manifold into two pieces. This result has found many generalizations and
extensions, and the discrete analogue, that is, the Cheeger-type inequality
for graphs,  leads to expander theory and  is of fundamental importance in
theoretical computer science and in the analysis of empirical networks when
represented as graphs. And discrete Cheeger-type inequalities can be
generalized, for instance, to weighted or signed graphs, again with diverse
applications. 

But there are also higher-order Laplacians, like the Hodge Laplacian operating
on exterior differential forms on a Riemannian manifold, or its discrete
analogue, the Eckmann Laplacian on a simplicial complex. In this paper, we
look at the simplicial case. 
It is natural to try to generalize the classical spectral results
that are known for graphs to  simplicial complexes. In particular, one can ask
for a version of  the Cheeger inequality for higher dimensional simplicial complexes. 
But it turns out that 
estimating the first non-trivial eigenvalue of the Eckmann Laplacian  on a
simplicial complex is a major long-standing open problem in the field of 
high dimensional expander theory. 
(Also the analogue in Riemannian geometry, to find  
Cheeger inequalities for  differential $k$-forms, is still far from being
understood and solved.)   And such a Cheeger-type estimate for the
Eckmann Laplacian on a simplicial complex is what we shall develop in this paper. A major difficulty
that we had to overcome consists already in the appropriate formulation of the
inequality, and for that, we need to figure out the relevant aspects of the
combinatorial structure of a simplicial complex that could support such an
inequality. This then needs to be combined with insights coming from a
non-linear analogue of the Laplacian, the 1-Laplacian, which involves the
$L^1$- instead of the $L^2$-norm behind the ordinary Laplacian. That operator is
analytically much more difficult than the ordinary Laplacian, but has the
advantage that the Cheeger-type inequality here becomes an equality. 

Our main result is contained in Theorem \ref{thm:Cheeger-manifold-complex}. It says that there is a constant $C$ such that for all simplicial complexes $\Sigma$ that triangulate a $(d+1)$-dimensional manifold in a uniform manner, 
\begin{equation}\label{1}
\frac{h^2(\Sigma_d)}{C}\le \lambda_{+} \le C\cdot h(\Sigma_d).     \end{equation}
where $\lambda_{+}$ is the first non-vanishing eigenvalue of the $d$-th (normalized) up-Laplacian $\delta^\ast \delta$ (when the corresponding homology is non-trivial, there is a couple of vanishing eigenvalues), and $h(\Sigma_d)$ is our Cheeger constant. The inequality \eqref{1} is of the same form as the known Cheeger inequalities on graphs and Riemannian manifolds, but is new for simplicial complexes.  Moreover, we shall also estimate the spectral gap from $d+2$,  which is the largest possible eigenvalue of the $d$-th normalized up-Laplacian. \\
Nevertheless, we need to recall and assemble some background
material before  our main result can be understood and appreciated. This material will concern
the general setting of Cheeger-type inequalities, 
simplicial  complexes and the Eckmann Laplacian, signed graphs, as well as
$p$-Laplacians, the usual Laplacians corresponding to $p=2$, and the
technically useful case being $p=1$. 

 Although the Cheeger inequalities proved in this paper are inspired by an $L^2$-to-$L^1$ analysis known in graph theory, additional and  essential difficulties occur at least in the proof of Theorem \ref{thm:Cheeger-manifold-complex}. Our approach to overcome these problems  is a combination of (1) a   graph-to-manifold approximation theory on Cheeger cuts \cite{{TSBLB-16},TrillosSlepcev-16,TrillosSlepcev-15,TMT20}, and (2) a nonlinear spectral duality theory developed recently by the authors \cite{{Jost/Zhang21c}}. 
The method is not standard, and we will describe these difficulties and our ideas  in detail in Section \ref{sec:gap-0}.

Our main results then can be described as follows:
\begin{itemize}
\item We establish spectral gaps from the largest possible eigenvalue  by using certain  combinatorial Cheeger-type quantities introduced in Section \ref{sec:gap-d+2}. These results are summarised in  Theorem \ref{thm:anti-signed-Cheeger} as a multi-way Cheeger inequality for estimating spectral gaps from $d+2$, the largest possible eigenvalue of the $d$-th normalized up-Laplacian. 
\item For the spectral gap from $0$, as in \eqref{1}, we propose a new combinatorial Cheeger constant, and we  show that in the most general case, it satisfies a  Cheeger inequality  and is, to the best of our knowledge,   the only  geometric quantity  known to have the same vanishing condition as the first nontrivial eigenvalue of the up-Laplacian.  
We should point out that 
the constants $c$ and $C$ appearing in the Cheeger-type  inequality of the form $ch^2\le \lambda\le Ch$ established in Proposition \ref{pro:rough-Cheeger} depend on the size of the simplicial complex. Importantly, however, as shown in Theorem \ref{thm:Cheeger-manifold-complex}, these constants $c$ and $C$ are  universal
for uniform  triangulations of a $(d+1)$-manifold. 
\end{itemize}


\subsection{Simplicial complexes}\label{sec:Sim-complex}

Here, we only consider 
 a finite set $V$ of vertices, leaving the infinite case open. 
We recall some standard terminology.  A \emph{simplicial complex} $\Sigma$ on  $V$  is a
    subset of its power set $\mathcal{P}(V)$ that is
    closed under taking subsets, i.e.\   $\forall \sigma\in\Sigma$, $\forall
    \sigma'\subset \sigma$, $\sigma'\in\Sigma$. The elements of $\Sigma$ are
    called \emph{simplices}. 
 It follows from this setting that all the vertices constituting a
    simplex are different from each other. A simplex $\sigma$ with $d+1$ vertices is called a 
    \emph{$d$-simplex}, and we call  $d$  its dimension.  Its
    subsimplices are called its \emph{faces}, and its $(d-1)$-dimensional
    faces 
    are called its \emph{facets}.
    The dimension of a simplicial complex  is the largest dimension among its simplices. A
    1-dimensional simplicial complex is a \emph{graph}.

     We usually assume that $\Sigma$  is \emph{connected}. This means that for any
 two of its non-empty simplices $\sigma, \sigma'$, there exists a chain of simplices
 $\sigma_0=\sigma, \sigma_1,\ldots, \sigma_m=\sigma'$ with the property that any two adjacent simplices in this chain
 have at least one vertex in common. And we usually and naturally assume that all elements of the vertex set $V$
  participate in the simplicial complex $\Sigma$, that is, every vertex is
  contained in at least one simplex.
  
In order to work with orientations, we need a slight modification or
amplification of our notation. 
  Here, an \emph{orientation}  of a $d$-simplex  is an ordering of its vertices up to even
  permutation. An odd permutation of the vertices changes an oriented
  $d$-simplex $\sigma_d$ into the oppositely oriented simplex
  $-\sigma_d$. Thus, from now on, $\sigma_d$ denotes an ordered simplex. 
    
    Let $\Sigma_d$ be the collection of the
    $d$-simplices of $\Sigma$.  In  particular, $\Sigma_0$ is the vertex set
    $V$. We let $C_d=C_d(\Sigma)$ be the abelian group with
    coefficients in $\R$ generated by the elements of $\Sigma_d$. We also
    write $C^d=C^d(\Sigma)$ for the linear functions from $C_d$ to $\R$ that satisfy
 \begin{equation}
  \label{in11}
  f(-\sigma_d)=-f(\sigma_d),
\end{equation}
  for every oriented $d$-simplex.

 For $f\in C^{d-1}$, we  define its \emph{coboundary} $\delta f:C^{d}\to \R$ as
 \begin{equation}
\label{in6}
\delta f(v_0,v_1,\ldots ,v_d)=\sum_{i=0}^d (-1)^if(v_0,\ldots, \hat{v_i},\ldots ,v_d),
\end{equation}
where, as usual,  a $\hat{\,}$ over a vertex means that it is omitted. 
Sometimes, we write
\begin{equation}\label{in7}
\delta_d: C^d \to C^{d+1},
\end{equation}
in order to specify the dimension.

The $d$-th \emph{cohomology group}\index{cohomology group} of the  simplicial complex $\Sigma$  is
\begin{equation}
\label{ch5a}
H^d(\Sigma):= \ker \delta_d/\image \delta_{d-1}.  
\end{equation}

  \begin{remark}
More generaly, we can consider  the linear space $C_d(\Sigma,\mathbb{F})$ with
    coefficients in an abelian group  $\mathbb{F}$, generated by the elements of $\Sigma_d$, and let 
   $C^d(\Sigma,\mathbb{F})$ be the linear functions from
   $C_d(\Sigma,\mathbb{F})$ to $\mathbb{F}$, satisfying \eqref{in11},  and then we can define the  cohomology group  $H^d(\Sigma,\mathbb{F})$ in the same way. It is usual to take $\mathbb{F}$  to be a commutative ring (e.g.\ the integer ring $\mathbb{Z}$) or even a field (e.g.\ the  field  $\mathbb{C}$ of the complex numbers, or the finite field $\mathbb{Z}_p\coloneqq \mathbb{Z}/p\mathbb{Z}$). As an interesting example, we refer to \cite{Steenbergen14} for the Cheeger constants defined on a simplicial complex which use the cohomology over the finite field $\mathbb{Z}_2$. In this paper, we  work with  real coefficients   or integer coefficients.

 If we pass to the reduced cochain complex, we get the reduced cohomology $\tilde{H}^d$, which can be  defined simply by the relation $\tilde{H}^0(\Sigma,\mathbb{F})\oplus \mathbb{F}=H^0(\Sigma,\mathbb{F})$ and $\tilde{H}^d(\Sigma,\mathbb{F})=H^d(\Sigma,\mathbb{F})$ for $d\ge1$, where $\mathbb{F}$ can be $\mathbb{Z}_2$, 
$\mathbb{Z}$ or $\mathbb{R}$.
\end{remark}
  

To proceed, we choose  positive definite inner products $(\cdot,\cdot)_d$
 on the  $C^d$. We can then define the adjoint
$(\delta_{d})^{*}:C^{d+1}\rightarrow ~C^{d}$  of the
coboundary operator $\delta_{d}$   by 
 $$
 (\delta_{d}f_{1},f_{2})_{d+1}=(f_{1},(\delta_{d})^{*} f_{2})_{d},
 $$
for  $f_{1}\in C^{d}$ and $f_{2}\in C^{d+1}$. We can then go back and forth between the $C^d$, as we have  the  arrows 
\begin{equation} \label{lap1}
C^{d-1}
\begin{array}{l}
\underrightarrow{\delta_{d-1\textrm{ }}}\\ 
 \overleftarrow{ {\delta_{d-1}}^*}
\end{array}
C^{d} \begin{array}{l}
\underrightarrow{\textrm{ }\textrm{ }\delta_{d\textrm{ }\textrm{ }}}\\ 
 \overleftarrow{ {\textrm{ }\textrm{ }\delta_{d}}^{* \textrm{ }}}
\end{array}
 C^{d+1}.
\end{equation}
This allows us to define the following three operators on $C^{d}$ (omitting
the argument $\Sigma$, i.e., writing for instance $L_d$ instead of
$L_d(\Sigma)$, as $\Sigma$ will be mostly kept fixed):
 \begin{enumerate}
\item[(i)] The \emph{$d$-dimensional up Laplace operator}  or simply \emph{$d$-up Laplacian} of the simplicial complex  $\Sigma$ is 
$$
L_{d}^{up}\coloneqq (\delta_{d})^{*}\delta_{d},
$$
\item [(ii)]The \emph{$d$-dimensional  down Laplace operator}  or  \emph{$d$-down Laplacian} is
$$
L_{d}^{down}\coloneqq \delta_{d-1}(\delta_{d-1})^{*},
$$
\item [(iii)]The \emph{$d$-dimensional Laplace operator}  or
  \emph{$d$-Laplacian} is the sum 
$$
L_{d}\coloneqq   L_{d}^{up}+ L_{d}^{down} =(\delta_{d})^{*}\delta_{d}+\delta_{d-1}(\delta_{d-1})^{*}.
$$
\end{enumerate}
 The operators $L_{d}^{up}$, $L_{d}^{down}$ and  $L_{d}$  are  self-adjoint
 and non-negative.  Therefore, their eigenvalues are non-negative real
 numbers.  

The multiplicities of the eigenvalue $0$ of the
Laplacians $L_d(\Sigma)$ contain topological information about $\Sigma$. This is the content of
Eckmann's Theorem  \cite{Eckmann44},
which is a  discrete version of the Hodge theorem. It says that 
 \begin{displaymath}
  \ker L_{d}(\Sigma)\cong {H}^{d}(\Sigma).
 \end{displaymath}
Thus, the multiplicity of the eigenvalue $0$ of the operator $L_d(\Sigma)$
is equal to the Betti number $b_d$, the dimension of ${H}^{d}(\Sigma)$. As a corollary, 
\begin{equation}
    C^d=\image \delta_{d-1}\oplus  \image (\delta_d)^{*} \oplus \ker L_d.
  \end{equation}
  We point out that Eckmann's Theorem does not depend on the choice of scalar products on the spaces $C^d$ (although the harmonic cocycles do).\\
While cohomology groups are defined as quotients, that is, as equivalence
classes of elements of  $C^d$, Eckmann’s Theorem provides us with concrete
representatives in $C^d$ of those equivalence classes, the so-called
\emph{harmonic cocycles}. These are the eigenvectors for  
  the eigenvalue $0$ of the Laplacian. We shall now look at  the non-zero part
  of the spectrum which will depend on the choice of the scalar products. 

Since $\delta_{d}\delta_{d-1}=0$ and ${\delta_{d-1}}^{*}{\delta_{d}}^{*}=0$, 
\begin{align}
&\image L_{d}^{down}(\Sigma) \subset  \ker L_{d}^{up}(\Sigma)\label{hodge},\\
&\image  L_{d}^{up}(\Sigma) \subset  \ker L_{d}^{down}(\Sigma)\label{hodge1}.
\end{align}
This implies that 
$\lambda\neq 0$ is an eigenvalue of $L_d(\Sigma)$ if and only if it is a  eigenvalue of either $L_{d}^{up}(\Sigma)$ or  $L_{d}^{down}(\Sigma)$. 
Therefore, the non-zero parts of the spectra satisfy 
\begin{equation}
\label{com1}
\mathop{\mathrm{spec}}\limits_{\neq 0}(L_{d}(\Sigma))= \mathop{\mathrm{spec}}\limits_{\neq 0}(L_d^{up}(\Sigma))\cup \mathop{\mathrm{spec}}\limits_{\neq 0}(L_d^{down}(\Sigma)).
\end{equation}
The multiplicity of the eigenvalue $0$ may be different, however.

Since $\mathop{\mathrm{spec}}\limits_{\neq 0}(AB)= \mathop{\mathrm{spec}}\limits_{\neq 0}(BA)$, for linear
operators $A$ and $ B$  on  Hilbert spaces, we conclude
\begin{equation}
\label{com2}
\mathop{\mathrm{spec}}\limits_{\neq 0}(L_{d}^{up}(\Sigma))=\mathop{\mathrm{spec}}\limits_{\neq 0}(L_{d+1}^{down}(\Sigma)).
\end{equation}
From (\ref{com1}) and (\ref{com2}) we conclude that each of  the three families of multisets 
$$\{\mathop{\mathrm{spec}}\limits_{\neq 0}(L_{d}(\Sigma))\mid 0\leq d \leq m\}\textrm{,}\; \{\mathop{\mathrm{spec}}\limits_{\neq 0}(L_{d}^{up}(\Sigma))\mid 0\leq d \leq m-1\}\textrm{,} \; \{\mathop{\mathrm{spec}}\limits_{\neq 0}(L_{d}^{down}(\Sigma))\mid 1\leq d \leq m\}$$
determines the other two. Therefore, it suffices to consider only one
of them.

We shall also make use of the following general result, the
Courant-Fischer-Weyl minimax principle. 
\begin{lemma}\label{rayleigh}
  Let the linear operator $A:H\to H$ on a finite dimensional vector space be
  self-adjoint w.r.t. the scalar product $(.,.)$. Then its eigenvalues and
  eigenvectors are the critical values and the critical points of the Rayleigh
  quotient, defined for $f\neq 0$, 
  \begin{equation}
    \label{r1}
    \frac{(Af,f)}{(f,f)}. 
  \end{equation}
\end{lemma}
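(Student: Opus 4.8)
The plan is to prove the Courant–Fischer–Weyl minimax principle in the clean form stated, namely that the eigenvalues of a self-adjoint operator $A$ are exactly the critical values of the Rayleigh quotient $R(f) = (Af,f)/(f,f)$ on $H\setminus\{0\}$, and the eigenvectors are exactly the critical points. First I would observe that $R$ is scale-invariant, $R(tf)=R(f)$ for $t\neq 0$, so it descends to a smooth function on the unit sphere $S = \{f : (f,f)=1\}$, and it suffices to analyze its critical points there. Since $S$ is compact and $R$ is smooth, critical points exist; in fact the minimum and maximum are attained.

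The key computational step is the first-variation calculation. For a smooth curve $f(t)$ in $H$ with $f(0)=f$, differentiating $R(f(t))$ at $t=0$ using self-adjointness of $A$ (so that $\frac{d}{dt}(Af(t),f(t)) = 2(Af, \dot f(0))$) gives
\begin{equation}
\left.\frac{d}{dt}\right|_{t=0} R(f(t)) = \frac{2\,(Af - R(f) f,\ \dot f(0))}{(f,f)}.
\end{equation}
Hence $f\neq 0$ is a critical point of $R$ iff $(Af - R(f)f, g) = 0$ for all $g\in H$ (on the sphere one restricts to $g\perp f$, but the component along $f$ is automatic since $(Af-R(f)f,f)=0$), which by nondegeneracy of the inner product is equivalent to $Af = R(f) f$, i.e. $f$ is an eigenvector with eigenvalue $\lambda = R(f)$. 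This already establishes that critical points are eigenvectors and critical values are eigenvalues.

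For the converse — every eigenvalue arises as a critical value — I would invoke the spectral theorem for self-adjoint operators on a finite-dimensional inner product space: there is an orthonormal basis $e_1,\dots,e_n$ of eigenvectors, $Ae_i = \lambda_i e_i$. Each $e_i$ is then a critical point by the computation above, with $R(e_i) = \lambda_i$, so every eigenvalue is a critical value. Conversely, if $f$ is any critical point, writing $f = \sum c_i e_i$ and using $Af = R(f) f$ forces $c_i(\lambda_i - R(f)) = 0$ for all $i$, so $f$ lies in the eigenspace of the eigenvalue $R(f)$; this pins down the critical points precisely as the (nonzero) eigenvectors.

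The main obstacle here is essentially bookkeeping rather than depth: one must be careful that "critical point of $R$ on $H\setminus\{0\}$" and "critical point of $R|_S$" give the same notion (handled by scale-invariance), and that the Lagrange-multiplier condition on the sphere matches the eigenvalue equation exactly (handled by noting the radial direction contributes nothing to the differential). A secondary point worth stating is that Lemma~\ref{rayleigh} is really just a repackaging of the finite-dimensional spectral theorem, and no iterated min-max over subspaces is needed for this particular formulation — the standard $\lambda_k = \min_{\dim W = k}\max_{f\in W} R(f)$ refinement could be mentioned as a remark but is not required by the statement as given.
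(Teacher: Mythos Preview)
Your argument is correct and is the standard proof: compute the differential of the Rayleigh quotient, use self-adjointness to get $dR_f(g) = 2(Af - R(f)f,g)/(f,f)$, and invoke the spectral theorem for the converse direction. The treatment of scale-invariance and of the radial direction on the sphere is handled properly.

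There is nothing to compare against, however: the paper states Lemma~\ref{rayleigh} as a classical fact (the Courant--Fischer--Weyl principle) and gives no proof at all. It is invoked only once, in the proof of Theorem~\ref{thm:anti-signed-Cheeger}, to pass from an identity between Rayleigh quotients to an identity between eigenvalues. So your write-up is not reproducing or diverging from the paper's approach; it is simply supplying a proof the authors chose to omit. If anything, your closing remark is apt: the lemma as stated only asserts the critical-point characterization, not the full min-max formula over $k$-dimensional subspaces, and your proof matches exactly that.
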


\subsection{Cheeger inequalities}
\label{sec:many-cheeger}

Cheeger\cite{Cheeger70} showed that the first non-vanishing eigenvalue of the
Laplace-Beltrami operator of a compact connected Riemannian manifold can be bounded from below in
terms of a constant introduced by him and thence called the Cheeger constant
which quantifies how difficult it is to cut the manifold into two large pieces
by a small hypersurface. Buser \cite{Buser} then also showed an upper
estimate. Thus, this eigenvalue is controlling and controlled by the
Cheeger constant. It was then realized in \cite{Dodziuk84,Alon,Chung} that an
analogous estimate holds on graphs, for the first non-vanishing eigenvalue of
the graph Laplacian. The analogue of Cheeger's constant had in fact already been introduced
by Polya \cite{Polya}, without  connecting it to eigenvalues. To formulate the
latter inequalities, we consider an undirected and unweighted  graph $\Gamma=(V,E)$ with vertex set $V$
and edge set $E$. The degree $\deg v$ of a vertex $v$ is the number of its
neighbors, that is, the number of vertices directly connected it by edges.
We define the volume of $S\subset V$ is
$\vol (S)= \sum_{v\in S} \deg v$,  
for $V_1,V_2\subset V$,
$|E(V_1,V_2)|$ is the number of edges with one endpoint in $V_1$ and the
other in $V_2$. 
We then put
\begin{equation}\label{che17a}
\eta(S):=\frac{|{\partial S}
  |}{\min(\mathrm{vol}(S),\mathrm{vol}(V\backslash
  S))},
\end{equation}
and introduce the \emph{(Polya)-Cheeger constant}
\begin{equation}\label{che17b}
h= \min_{S\ne\emptyset,V}\eta(S).
\end{equation}
The estimate for the first non-vanishing eigenvalue $\lambda$ of the
normalized graph Laplacian then says
\begin{equation}
  \label{che17c}
  \frac{1}{2} h^2\le \lambda \le 2h. 
\end{equation}
This estimate is important, for instance, in the theory of expander graphs,
because a good expander  family  should have a  uniformly large such $\lambda$.

In fact, one can not only bound the smallest non-vanishing eigenvalue of a
graph from below, but also the largest one from above. The largest eigenvalue
of the normalized Laplacian of a graph is always $\le 2$. Equality is realized
for bipartite graphs, and for non-bipartite graphs, the difference $2-\lambda$
can be controlled \cite{Bauer,Trevisan}. 

Already in the original paper by Cheeger \cite{Cheeger70}, the problem was
proposed to derive an estimate for the smallest non-vanishing eigenvalue of
the Hodge Laplacian on differential $k$-forms on a
closed Riemann manifold. So far, this problem is not solved. Its discrete
version, a Cheeger-type  inequality on simplicial complexes, is also a
long-standing open problem in the area of high dimensional expanders.
  While some partial answers have been proposed and developed in the literature, it seems that none of them gives a complete solution to the problem.  In fact, there are many different definitions of  Cheeger  constants on
simplicial complexes. For example, it is known that the  easier upper bound of \eqref{che17c}   holds for the Cheeger constant suggested by Parzanchevski, Rosenthal and Tessler \cite{Gundert14,Parzanchevski15}. But  none of the constants proposed so far in the literature   can satisfy a  full  Cheeger  inequality as in the
graph setting. In particular, in the field of higher-dimensional expanders,
people use the so-called  $\mathbb{Z}_2$-expander for constructing  the  Cheeger
constants on a simplicial complex  (see \cite{LM06,MW09,Dotterer12,Gundert12,Gromov} and papers which stem from them for $\mathbb{Z}_2$-cohomological Cheeger constants).

 Thus, the problem is, and the essential purpose of this paper  is to establish some good  
 estimate for the first nontrivial eigenvalue of the discrete Eckmann Laplacian
 by introducing some suitable Cheeger-type constants on a simplicial complex
 and proving that this controls, and in turn is controlled by that
 eigenvalue, analogously to \eqref{che17c}. 
 Controlling this eigenvalue from below in terms of the Cheeger-type constant
 is called the Cheeger side, while controlling it from above is called the
 Buser side. Usually, the latter is easier than the former.

Important contributions in this direction come from  Dotterrer and Kahle
\cite{Dotterer12} and Steenbergen, Klivans and  Mukherjee
\cite{Steenbergen14}. 
In \cite{Steenbergen14},  a   Cheeger constant via cochain
complexes is analyzed, 
\begin{equation}
  \label{skm}
h^d(\Sigma):=
\min\limits_{\phi\in
  C^d(\Sigma,\mathbb{Z}_2)\setminus\mathrm{Im\,}\delta}\frac{\|\delta\phi\|}{\min\limits_{\psi\in\mathrm{Im\,}\delta}\|\phi+\psi\|}
\end{equation}
 which satisfies
$$h^d(\Sigma)=0\Longleftrightarrow \tilde{H}^d(\Sigma,\mathbb{Z}_2)\ne0,\;\; \forall d\ge 0,$$
where $\|\cdot\|$ is the Hamming norm on $C^d(\Sigma,\mathbb{Z}_2)$ (i.e.\ the
$l^1$-norm on $\mathbb{Z}_2^{n}$ with $n=\#\Sigma_d$).
 This 
is a natural generalization of the classical graph Cheeger constant
\eqref{che17b} to higher dimensions on simplicial complexes.  
Unfortunately, based on the results in \cite{Gundert12}  and \cite{Steenbergen14}, 
the most straightforward attempt at a higher-dimensional
Cheeger inequality fails, even for the Buser side -- in higher
dimensions, spectral expansion (an eigenvalue gap for the
Laplacian) does not imply combinatorial expansion  \cite{Lubotzky18}. 
In fact, according to the
examples and theorems in
\cite{Dotterer12,Gundert14,Gundert16,Parzanchevski15,Steenbergen14}, all the
 Cheeger constants defined using cohomology (or homology) with
$\mathbb{Z}_2$-coefficients cannot satisfy a general two-sided Cheeger
inequality as in the graph setting. This is a consequence of the  relation
$$\lambda(\Delta_d^{up})=0\Leftrightarrow{\lambda(L_d^{up})=0
  \Longleftrightarrow \tilde{H}^d(\Sigma,\R)\ne0},\;\; d\ge 0,$$
but for $d\ge 1$, the non-vanishing of $\tilde{H}^d(\Sigma,\R)$ is not equivalent
to that of $\tilde{H}^d(\Sigma,\mathbb{Z}_2)$.\\

We should also mention that in \cite{Parzanchevski15}, another Cheeger-type constant is
proposed, and their Theorem 1.2 generalizes the  upper
Cheeger inequality to higher dimensions. 
That modified
Cheeger number  is nonzero only if the simplicial complex has a complete skeleton,
and the Cheeger side of the inequality includes an additive constant. 
We shall adopt a different definition, and therefore do not go into further
detail here.

In this paper, we shall first derive 
Theorem \ref{thm:anti-signed-Cheeger} which contains an estimate for the spectral gap from 
$d+2$, recalling that for the vertex Laplacian of a  graph, i.e., in the case
$d=0$, the spectral gap
at 2 can be controlled.  
We then turn to the more difficult   estimate  for the spectral gap from 
$0$, namely, the Cheeger-type estimate for the first non-trivial eigenvalue of
the Eckmann Laplacian. 
Since such an estimate cannot be derived for the Cheeger-type constants
introduced earlier, our first contribution here is the  introduction of  a new
Cheeger  constant. The key point is that in contrast to the graph case, on
higher dimensional simplices, orientations and multiplicities enter into the coboundary relations
  and therefore implicitly into the
  eigenvalues. We therefore consider generalized (i.e., with both positive and negative multiplicities) multisets
  of $d$-simplices.

\subsection{Signed graphs}\label{signed}
We consider unweighted and undirected graphs $\Gamma$. When $v,v'\in V$, the vertex
set of  $\Gamma$, are connected by an edge, denoted as $(vv')$, we write $v\sim v'$ and
call $v$ and $v'$ neighbors. We shall need an additional structure, a sign
function on the edges. A  \emph{signed graph} thus is a graph $\Gamma$ equipped with a map $s$ from its edge set
to $\pm 1$. We may \emph{switch signs} by taking a vertex and changing the signs of all edges that it is
contained in. A signed  graph is called \emph{balanced} if by switching some vertices, we can make
all signs $=1$, and it is \emph{antibalanced}, if we can make them all $=-1$.

Signed  graphs have many   applications in modeling  biological networks, social relations, ferromagnetism, and general signed networks \cite{Zaslavsky,Harary53, ArefWilson19,ArefMasonWilson20}. 
The spectral theory for signed graphs has led to a number of breakthroughs in theoretical computer science and combinatorial geometry,  including the solutions to the  sensitive conjecture \cite{Huang19} and the open problem on equiangular lines \cite{JTYZZ21,JTYZZ}.

The  Laplacian  of the signed graph $(\Gamma,s)$ is
\begin{equation}
  \label{slap}
  \Delta_s f(v)= f(v)-\frac{1}{\deg v}\sum_{v' \sim v}s(vv')f(v')=\frac{1}{\deg v}\sum_{v' \sim v}(f(v)-s(vv')f(v'))
\end{equation}
We record some basic results about the spectrum of this operator
\cite{Atay20} that can be easily checked.
\begin{lemma}\label{ss}
The eigenvalues of $\Delta_s$ are real and lie in the interval $[0,2]$. In
fact, the smallest eigenvalue is $=0$ if and only if $(\Gamma,s)$ is balanced,
and positive otherwise. Likewise, the largest eigenvalue is $=2$ if and only
if the graph is antibalanced.
\end{lemma}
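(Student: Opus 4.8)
The plan is to apply the Courant--Fischer--Weyl principle (Lemma~\ref{rayleigh}) after writing $\Delta_s$ in a manifestly self-adjoint form and computing its Rayleigh quotient as a sum of squares over edges. First I would note that $\Delta_s=\mathrm{id}-D^{-1}A_s$, where $D=\diag(\deg v)$ and $A_s$ is the symmetric signed adjacency matrix with $(A_s)_{vv'}=s(vv')$ when $v\sim v'$ and $0$ otherwise. Then $\Delta_s$ is self-adjoint with respect to the degree-weighted inner product $\langle f,g\rangle:=\sum_{v\in V}\deg(v)\,f(v)g(v)$ (equivalently, it is conjugate, via $D^{-1/2}$, to the symmetric matrix $\mathrm{id}-D^{-1/2}A_sD^{-1/2}$), so its eigenvalues are real, and by Lemma~\ref{rayleigh} the smallest and the largest of them are the minimum and the maximum of $\langle\Delta_s f,f\rangle/\langle f,f\rangle$ over $f\neq 0$.

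The core step is the pair of identities, obtained by expanding and using $s(vv')^2=1$ together with $\sum_{v}\deg(v)f(v)^2=\sum_{(vv')\in E}\bigl(f(v)^2+f(v')^2\bigr)$,
\[
\langle\Delta_s f,f\rangle=\sum_{(vv')\in E}\bigl(f(v)-s(vv')f(v')\bigr)^2,
\qquad
2\langle f,f\rangle-\langle\Delta_s f,f\rangle=\sum_{(vv')\in E}\bigl(f(v)+s(vv')f(v')\bigr)^2 .
\]
Both right-hand sides are nonnegative, which immediately gives $0\le\lambda\le 2$ for every eigenvalue $\lambda$ of $\Delta_s$.

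For the equality cases I would argue as follows. The smallest eigenvalue is $0$ iff the first sum of squares vanishes for some $f\not\equiv 0$, i.e.\ iff there is a nonzero $f$ with $f(v)=s(vv')f(v')$ for every edge $(vv')$. Such an $f$ has $|f|$ constant along edges, hence (by connectedness of $\Gamma$) constant on $V$ and nonzero; after rescaling, $\varepsilon:=f/|f|$ takes values in $\{\pm 1\}$ and satisfies $s(vv')=\varepsilon(v)\varepsilon(v')$, which says exactly that switching the vertices in $\{v:\varepsilon(v)=-1\}$ makes all signs $+1$, i.e.\ $(\Gamma,s)$ is balanced; conversely, if $(\Gamma,s)$ is balanced then a balancing function $\varepsilon$ gives $f=\varepsilon$ in the kernel, and otherwise the first quadratic form is positive definite and the smallest eigenvalue is positive. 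The statement about the largest eigenvalue is the same statement applied to the signed graph $(\Gamma,-s)$: the second quadratic form above is the first quadratic form of $(\Gamma,-s)$, and $(\Gamma,-s)$ being balanced is precisely the definition of $(\Gamma,s)$ being antibalanced.

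I do not expect a genuine obstacle here; the points that need care are (i) the combinatorial bookkeeping that collapses $\langle\Delta_s f,f\rangle$ into a single sum of squares indexed by edges (each edge is seen from both of its endpoints, producing the factor $2$), and (ii) in the equality analysis, upgrading the pointwise relations $f(v)=\pm s(vv')f(v')$ to a genuine switching function, which is exactly where connectedness of $\Gamma$ enters.
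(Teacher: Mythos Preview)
Your argument is correct and is exactly the standard proof of this fact. The paper does not actually supply a proof of Lemma~\ref{ss}; it merely cites \cite{Atay20} and remarks that the statements ``can be easily checked,'' so there is nothing to compare beyond noting that your Rayleigh-quotient computation and switching analysis are the intended ``easy check.''
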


To proceed, we recall  
the multi-way  Cheeger constant  $h_k^s$ on a signed graph $(\Gamma,s)$ introduced in \cite{Atay20}. 
For  disjoint $V_1,V_2\subset V$, let $E^+(V_1,V_2)=\{\{u,v\}\in E:u\in V_1,v\in V_2,s(uv)=1\}$ and $E^-(V_1)=\{\{u,v\}\in E:u,v\in V_1,s(uv)=-1\}$. The  signed bipartiteness ratio is defined as
$$\beta^s(V_1,V_2)=\frac{2\left(|E^-(V_1)|+|E^-(V_2)|+|E^+(V_1,V_2)|\right)+|\partial(V_1\sqcup V_2)|}{\vol(V_1\sqcup V_2)}.$$
The  signed Cheeger constant 
of the signed graph $(\Gamma,s)$ is then defined as $$h^s=\min\limits_{(V_1,V_2)\ne(\emptyset,\emptyset)}\beta^s(V_1,V_2)$$
where the minimum is taken over all possible sub-bipartitions of
$V$. $\beta^s$ and hence also $h^s$ is switching invariant.  

The Cheeger inequality for signed graphs established in \cite{Atay20} says that 
for a  signed graph $(\Gamma,s)$, we have 
\begin{equation}
  \label{atay}
\frac{\lambda_1(\Delta_s)}{2}\le h^s\le \sqrt{2\lambda_1(\Delta_s)}.  
\end{equation}

The $k$-way signed Cheeger constant is defined as
$$h_k^s=\min\limits_{\{(V_{2i-1},V_{2i})\}_{i=1}^k}\max\limits_{1\le i\le k}\beta^s(V_{2i-1},V_{2i})$$
where the minimum is taken over the set of all possible $k$  pairs of
disjoint sub-bipartitions $(V_1,V_2)$, $(V_3,V_4)$, $\ldots$,
$(V_{2k-1},V_{2k})$.  $h^s_k$ is again switching invariant.  

This definition allowed Atay and Liu 
to generalize and put into perspective the higher-order Cheeger inequality for
ordinary graphs by Lee, Oveis Gharan, and Trevisan \cite{LGT12}. Their
estimate is 
\begin{theorem}[\cite{Atay20}]\label{thm:signed-Cheeger}
 There exists an absolute constant $C$ such that for any
signed graph $(\Gamma,s)$, and any $k\in\{1,\ldots,n\}$, $$\frac{\lambda_k(\Delta_s)}{2}\le h_k^s\le Ck^3 \sqrt{\lambda_k(\Delta_s)}.$$ 
\end{theorem}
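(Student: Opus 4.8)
\textbf{The easy (Buser-type) direction} $\lambda_k(\Delta_s)/2\le h_k^s$. I would argue by Courant--Fischer. Choose a $k$-tuple of pairwise disjoint sub-bipartitions $(V_1,V_2),\dots,(V_{2k-1},V_{2k})$ realising $h_k^s$ and set $\chi_i=\mathbbm{1}_{V_{2i-1}}-\mathbbm{1}_{V_{2i}}$. The supports $V_{2i-1}\sqcup V_{2i}$ being pairwise disjoint, the $\chi_i$ span a $k$-dimensional subspace $W$. Writing the Rayleigh quotient of $\Delta_s$ with respect to the degree inner product as $\langle\Delta_s f,f\rangle=\sum_{\{u,v\}\in E}(f(u)-s(uv)f(v))^2$ and using $(a-b)^2\le 2a^2+2b^2$ on the edges that cross between two different supports or leave $\bigsqcup_i(V_{2i-1}\sqcup V_{2i})$ --- the edges internal to each support contribute exactly the numerator of the corresponding $\beta^s$, up to a factor $2$ --- one checks that $\langle\Delta_s g,g\rangle\le 2\max_i\beta^s(V_{2i-1},V_{2i})\,\langle g,g\rangle=2h_k^s\,\langle g,g\rangle$ for every $g\in W$. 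Lemma \ref{rayleigh} then gives $\lambda_k(\Delta_s)\le\max_{g\in W}\langle\Delta_s g,g\rangle/\langle g,g\rangle\le 2h_k^s$.

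\textbf{The hard (Cheeger-side) direction} $h_k^s\le Ck^3\sqrt{\lambda_k(\Delta_s)}$. I would transplant the higher-order Cheeger machinery of Lee--Oveis Gharan--Trevisan \cite{LGT12} to the signed Laplacian, the new feature being an antipodal symmetry. A useful picture is the \emph{signed double cover} $\tilde\Gamma$ on $V\times\{\pm\}$ (an edge $\{u,v\}$ with sign $\varepsilon=s(uv)$ is lifted to $\{u^+,v^{\varepsilon}\}$ and $\{u^-,v^{-\varepsilon}\}$): functions split into an even and an odd part, $\Delta_{\tilde\Gamma}$ acts as the ordinary $\Delta_\Gamma$ on the first and as $\Delta_s$ on the second, a sub-bipartition $(A,B)$ of $\Gamma$ corresponds up to a factor $2$ in conductance to the ``antipodally free'' set $(A\times\{+\})\cup(B\times\{-\})\subseteq V^{\pm}$, and $k$ disjoint sub-bipartitions correspond to a $2k$-subpartition of $V^{\pm}$ symmetric under the deck involution $\tau$. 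One cannot simply quote unsigned higher-order Cheeger for $\tilde\Gamma$, since $\lambda_{2k}(\Delta_{\tilde\Gamma})$ may be governed by the even sector (i.e.\ by $\lambda_k(\Delta_\Gamma)$) rather than by $\lambda_k(\Delta_s)$; one must run the argument inside the odd sector.

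Concretely: (i) take eigenfunctions $f_1,\dots,f_k$ of $\Delta_s$ for $\lambda_1\le\dots\le\lambda_k$, orthonormal in the degree inner product, and form the spectral embedding $F=(f_1,\dots,f_k):V\to\R^k$; orthonormality is exactly the isotropy $\sum_v\deg(v)F(v)F(v)^\top=I_k$, so $\sum_v\deg(v)\|F(v)\|^2=k$ while $\sum_{\{u,v\}\in E}\|F(u)-s(uv)F(v)\|^2=\sum_i\lambda_i\le k\lambda_k$, i.e.\ in the energy-weighted sense $F(u)$ is close to $+F(v)$ or to $-F(v)$ on most edges --- equivalently $F$ is an antipodally symmetric, almost non-expanding embedding of $\tilde\Gamma$, descending to an almost non-expanding map into a radial neighbourhood of $\mathbb{RP}^{k-1}$; (ii) run the LGT localisation/separation step on this embedding, antipodally symmetrically: carve out $k$ pairwise well-separated regions $R_1,\dots,R_k$ on the sphere (together with their antipodes), each carrying a constant fraction of the $\|F\|^2$-mass, and pull them back and smooth against the radial profile to obtain functions $g_1,\dots,g_k:V\to\R$ with pairwise disjoint supports, each positive on the preimage of $R_i$ and negative on the preimage of $-R_i$, with signed Rayleigh quotient $\le P(k)\lambda_k$ for an explicit polynomial $P$; (iii) apply to each $g_i$ the single-pair signed Cheeger rounding behind \eqref{atay} --- the sweep/threshold argument, which goes through verbatim for a function supported on a subset --- to produce a sub-bipartition $(V_{2i-1},V_{2i})\subseteq\supp g_i$ with $\beta^s(V_{2i-1},V_{2i})\le\sqrt{2P(k)\lambda_k}$. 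Disjointness of the supports makes these $k$ sub-bipartitions disjoint, whence $h_k^s\le\max_i\beta^s(V_{2i-1},V_{2i})\le\sqrt{2P(k)}\,\sqrt{\lambda_k(\Delta_s)}$; carrying the polynomial through the separation step, which in the signed setting loses one power of $k$ more than in the unsigned case because of the antipodal folding, gives $\sqrt{2P(k)}=O(k^3)$, the asserted bound.

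\textbf{Main obstacle.} The only non-routine step is (ii): separating the spectral embedding into $k$ well-separated, mass-carrying pieces with controlled localised energy. This is the technical heart of every higher-order Cheeger theorem and the source of the entire $\mathrm{poly}(k)$ loss; here it must additionally be carried out antipodally symmetrically, so that the pieces come in honest $\pm$ pairs and each bump $g_i$ is supported on a region together with its antipode (equivalently, so that the induced $2k$-subpartition of $\tilde\Gamma$ is $\tau$-symmetric with each half antipodally free). The remaining points --- the easy direction, the energy estimate for the spectral embedding, and the fact that the single-pair signed Cheeger rounding respects supports --- are straightforward.
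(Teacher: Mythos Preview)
The paper does not prove this theorem at all: it is quoted verbatim from Atay--Liu \cite{Atay20} and used as a black box (notably in the proof of Theorem~\ref{thm:anti-signed-Cheeger}). So there is no ``paper's own proof'' to compare against.

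That said, your sketch is a fair outline of how \cite{Atay20} actually proceeds. The easy direction via disjointly supported test functions $\mathbbm{1}_{V_{2i-1}}-\mathbbm{1}_{V_{2i}}$ and Courant--Fischer is exactly right. For the hard direction, Atay--Liu do run the Lee--Oveis Gharan--Trevisan localisation on the signed spectral embedding and then round each localised bump with the one-pair signed Cheeger sweep; your description of the obstacle (making the random-partition/padding step respect the antipodal symmetry so that the resulting regions come in $\pm$ pairs) is the genuine technical point. One small caveat: your heuristic that the signed case ``loses one power of $k$ more than in the unsigned case because of the antipodal folding'' is not quite how the $k^3$ arises in \cite{Atay20}; the exponent there comes from the same padded-decomposition and smoothing losses as in \cite{LGT12}, not from an extra $k$-factor introduced by the symmetrisation, and the double-cover picture, while conceptually helpful, is not the route they take. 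But this is a matter of bookkeeping, not of strategy.
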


\subsection{A relation between simplicial complexes and signed graphs}\label{sec:simp-sign}

Formally, we shall work on an abstract
simplicial complex $\Sigma$ with  vertex set $V=\{1,\cdots,n\}$. For
$\sigma=\{i_0,\cdots,i_d\}\in \Sigma$, we use $[\sigma]:=[i_0,\cdots,i_d]$ to
indicate  the  oriented  $d$-dimensional simplex which is  formed by $\sigma$
when arranging its vertices in the specified order. Now, we choose  an ordering on the  vertices of each $d$-simplex, i.e., we fix an orientation of each simplex. We then let  $[\Sigma_d]=\{[\sigma]:\sigma\in \Sigma_d\}$ be the set of the  oriented  $d$-simplexes,  
in which we have fixed an arbitrary orientation for each simplex. 

Analogously to the cochain group $C^d(\Sigma)$, the $d$-th chain group $C_d(\Sigma)$ of $\Sigma$ is a vector space with the base
 $[\Sigma_d]$. The boundary map $\partial_d:C_d(\Sigma)\to C_{d-1}(\Sigma)$  is a linear operator 
 defined by 
 \[\partial_d[i_0,\cdots,i_d]=\sum_{j=0}^d(-1)^j[i_0,\cdots,i_{j-1},i_{j+1},\cdots,i_d],\]
 which can  also be represented by the incidence matrix $B_d$ of dimension
 $|\Sigma_{d-1}|\times |\Sigma_d|$ whose  elements belong to  $\{-1,0,1\}$.

 With this notation, the $d$-th cochain group $C^d(\Sigma)$ is  the dual of
 the chain group $C_d(\Sigma)$, that is, $C^d(\Sigma)$ collects all the $\R$-valued skew-symmetric function on all ordered $d$-simplices. The  simplicial coboundary map
 $\delta_d:C^d(\Sigma)\to C^{d+1}(\Sigma)$ is a linear operator generated by
 $(\delta_df)([i_0,\cdots,i_{d+1}])=\sum_{j=0}^{d+1}(-1)^jf([i_0,\cdots,i_{j-1},i_{j+1},\cdots,i_{d+1}])$
 for any $f\in C^d(\Sigma)$. It is obvious that $\delta_d=B_{d+1}^\top$, and
 we can then define the adjoint via $\delta_d^*=B_{d+1}$. We
 can therefore  also use the incidence matrices  to express the Laplace operators (see \cite{Horak13a}): 
\begin{enumerate}[-]
\item the $d$-th up Laplace  operator  $L^{up}_d=\delta_d^*\delta_d=B_{d+1} B_{d+1}^\top$ 
\item the $d$-th down Laplace  operator  $L^{down}_d=\delta_{d-1}\delta_{d-1}^*=B_d^\top B_d$ 
\item the $d$-th   Laplace  operator  $L_d=L^{up}_d+L^{down}_d=\delta_d^*\delta_d+\delta_{d-1}\delta_{d-1}^*=B_d^\top B_d+B_{d+1} B_{d+1}^\top$ 
\end{enumerate}

The aim  of the present paper is to provide new Cheeger-type inequalities for
the first nontrivial eigenvalues of $L_d$,  $L_{d}^{up}$ and
$L_{d}^{down}$. As explained in Section \ref{sec:Sim-complex}, it
suffices to consider 
 $L_{d}^{up}$ for every $d$. And as stated in that section, this
 operator depends on the choice of scalar products. With an appropriate
 choice, we obtain the \emph{normalized} Laplacian, which we denote by
 $\Delta_{d}^{up}$, in order to distinguish it from the general case. 
 It is given by
 \[(\Delta_{d}^{up} f)([\sigma])=  f([\sigma]) 
+  \frac{1}{\deg \sigma}\sum_{\substack{\sigma'\in \Sigma_{d}: \sigma'\neq \sigma,\\ \exists!\rho\in\Sigma_{d+1}\text{ s.t. }\sigma,\sigma'\text{ are facets of } \rho }} \sgn([\sigma],\partial [\rho])\sgn([\sigma'],\partial [\rho])f([\sigma']),  \]
Our results will be obtained for this operator, and they only partially
generalize to a general $L_{d}^{up}$.

A key step is to 
express the up-Laplacian of a  simplicial complex in terms of the Laplacian of an associated signed graph. 
The  normalized up-Laplacian of a  simplicial complex $\Sigma$ can be written as
\begin{equation}
\label{asc1}
(\Delta_{d}^{up} f)([\sigma])=  f([\sigma]) 
-  \frac{1}{\deg \sigma}\sum_{\substack{\sigma'\in \Sigma_{d}: \sigma'\neq \sigma,\\ \exists!\rho\in\Sigma_{d+1}\text{ s.t. }\sigma,\sigma'\text{ are facets of } \rho }} s([\sigma],[\sigma'])f([\sigma']),    
\end{equation}
where we have put
\begin{align}
s([\sigma],[\sigma'])&:= 
-\sgn([\sigma],\partial [\rho])\sgn([\sigma'],\partial [\rho]).\label{asc2}
\end{align}

Thus, we may express $\Delta_{d}^{up}$ in terms of  the Laplacian
$\Delta_{(\Gamma_d,s)}$ for the signed graph $(\Gamma_d,s)$ with vertex set
consisting of the $d$-simplices of our simplicial complex, and where two
different such vertices $\sigma, \sigma'$ are connected by an edge, $\sigma
\sim \sigma'$, if there exists a $(d+1)$-simplex $\rho$ in $\Sigma$ with
$\sigma,\sigma'\in\partial  \rho$. 

\begin{remark}
This construction is very natural and  essentially follows from the definition
of the (up/down)  combinatorial  Laplacian matrices of a simplicial complex. A similar idea was already used to define the signed adjacency matrix of a triangulation on a surface \cite{FST08}. 
\end{remark} 
The relation between the up-Laplacian and the signed graph Laplacian
\eqref{slap} is
\begin{equation}
 \label{asc3}
\Delta_{d}^{up}= (d+1)\Delta_{(\Gamma_d,s)} - d\ \mathrm{Id}.
\end{equation} 
By  \eqref{asc3}, the eigenvalues $\mu_j$ of $\Delta_{d}^{up}$ and the eigenvalues $\lambda_j$ of $\Delta_{(\Gamma_d,s)}$ are related by
\begin{equation}
    \label{asc5}
\mu_j =(d+1)\lambda_j -d.
\end{equation}
Since the eigenvalues of $\Delta_{(\Gamma_d,s)}$ lie in the interval $[0,2]$,
those of 
   $\Delta_{d}^{up}$ lie in the interval $[0,d+2]$.
In fact, since $\mu_j \ge 0$ in \eqref{asc5}, the eigenvalues of $\Delta_{(\Gamma_d,s)}$ are $\ge  \frac{d}{d+1}$. Equality
  holds if and only if there is some non-trivial $f$ with $\delta_d
  f=0$. More precisely, the multiplicity of the eigenvalue $\frac{d}{d+1}$ of
  $\Delta_{(\Gamma_d,s)}$ equals the dimension of the kernel of the coboundary
  operator $\delta_d$. 
In particular, for $d>0$, the graph $(\Gamma_d,s)$ is never balanced. 

The next result then is an easy consequence of Lemma \ref{ss}. 
\begin{pro}\label{prop:q+2 eigen}
The spectrum of $\Delta_{d}^{up}$ contains the  eigenvalue $d+2$ if and only if the signed graph $(\Gamma_d,s)$ has an antibalanced component. Moreover, the multiplicity of  $d+2$ equals the number of antibalanced components of $(\Gamma_d,s)$.
\end{pro}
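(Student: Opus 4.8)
The plan is to reduce the statement entirely to the spectral theory of the signed graph Laplacian $\Delta_{(\Gamma_d,s)}$ through the affine relation \eqref{asc3}, and then apply Lemma \ref{ss} componentwise. First I would note that, by \eqref{asc5}, an eigenvalue $\mu_j$ of $\Delta_d^{up}$ equals $d+2$ exactly when the corresponding eigenvalue $\lambda_j$ of $\Delta_{(\Gamma_d,s)}$ satisfies $(d+1)\lambda_j-d=d+2$, i.e.\ $\lambda_j=2$; and since \eqref{asc3} exhibits $\Delta_d^{up}$ as an affine function of $\Delta_{(\Gamma_d,s)}$, the two operators share the same eigenvectors, so the $(d+2)$-eigenspace of $\Delta_d^{up}$ is identical to the $2$-eigenspace of $\Delta_{(\Gamma_d,s)}$. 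It therefore suffices to show that $2$ is an eigenvalue of $\Delta_{(\Gamma_d,s)}$ precisely when $(\Gamma_d,s)$ has an antibalanced component, with multiplicity equal to the number of antibalanced components.

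Next I would use that $\Delta_{(\Gamma_d,s)}$ is block-diagonal with respect to the decomposition of $\Gamma_d$ into connected components $\Gamma^{(1)},\dots,\Gamma^{(r)}$: the formula \eqref{slap} couples only neighbouring vertices, so $\Delta_{(\Gamma_d,s)}=\bigoplus_{\ell}\Delta_{(\Gamma^{(\ell)},s)}$ and the spectrum with multiplicities is the (multiset) union of the spectra of the $\Delta_{(\Gamma^{(\ell)},s)}$. Hence the multiplicity of $2$ in $\Delta_{(\Gamma_d,s)}$ is the sum over $\ell$ of the multiplicities of $2$ in the $\Delta_{(\Gamma^{(\ell)},s)}$, and it remains to analyze a single connected signed graph $(\Gamma',s)$. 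By Lemma \ref{ss}, $2$ is an eigenvalue of $\Delta_{(\Gamma',s)}$ if and only if $(\Gamma',s)$ is antibalanced, so the only remaining point is that in the antibalanced case this multiplicity is exactly $1$. For this I would observe that switching signs conjugates $\Delta_{(\Gamma',s)}$ by a diagonal $\{\pm1\}$-matrix and hence preserves its spectrum; switching so that all edges of $(\Gamma',s)$ carry sign $-1$, formula \eqref{slap} becomes $\Delta_{(\Gamma',s)}=\mathrm{Id}+P$, where $P=D^{-1}A$ is the random-walk operator of the underlying connected unsigned graph. The eigenvalue $2$ of $\mathrm{Id}+P$ corresponds to the eigenvalue $1$ of $P$, which, $P$ being the transition matrix of an irreducible Markov chain on a connected graph, has multiplicity $1$ by Perron--Frobenius.

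Summing these contributions over the components then gives that the multiplicity of $2$ in $\Delta_{(\Gamma_d,s)}$, and therefore of $d+2$ in $\Delta_d^{up}$, equals the number of antibalanced components of $(\Gamma_d,s)$, which simultaneously yields the claimed equivalence. The whole argument is routine once \eqref{asc5} is in place, essentially as the remark preceding Proposition \ref{prop:q+2 eigen} already suggests; the only steps requiring a little care are the componentwise reduction and the verification that each antibalanced component contributes multiplicity exactly one rather than merely a positive multiplicity, which is where Perron--Frobenius (or, equivalently, the fact that the normalized graph Laplacian of a connected graph has a simple zero eigenvalue) enters.
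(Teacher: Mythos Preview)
Your argument is correct and follows exactly the route the paper intends: the paper simply states that the proposition ``is an easy consequence of Lemma \ref{ss}'' via the affine relation \eqref{asc3}--\eqref{asc5}, and you have unpacked precisely that. Your added care about the componentwise decomposition and the Perron--Frobenius step to pin down multiplicity exactly one per antibalanced component is a genuine detail that the paper leaves implicit in the word ``easy''.
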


We consider the opposite $(\Gamma_d,-s)$ of the signed graph $(\Gamma_d,s)$,
with its Laplacian  $\Delta_{(\Gamma_d,-s)}$. Then,  the eigenvalues of the three Laplacians  $\Delta_{d}^{up}$, $\Delta_{(\Gamma_d,s)}$ and $\Delta_{(\Gamma_d,-s)}$ satisfy  the relation:
$$
\begin{matrix}
\text{Spectrum of }\Delta_{d}^{up} & ~& \text{Spectrum of }\Delta_{(\Gamma_d,s)} & ~& \text{Spectrum of }\Delta_{(\Gamma_d,-s)} \\
\\
   &~&   &~&    \\
 0 &~&  \frac{d}{d+1}  &~&  \frac{d+2}{d+1} \\
\vdots  &~& \vdots  &~&  \vdots \\ \lambda &\Longleftrightarrow& \frac{\lambda+d}{d+1}  &\Longleftrightarrow&  \frac{d+2-\lambda}{d+1} \\
\vdots  &~& \vdots  &~&  \vdots \\ d+2 &~& 2 &~&  0 \\
\end{matrix}
$$
that is,
\begin{pro}
$\lambda$ is an eigenvalue of $\Delta_{d}^{up}$ if and only if
$\frac{\lambda+d}{d+1} $ is an eigenvalue of $\Delta_{(\Gamma_d,s)}$ if and
only if $\frac{d+2-\lambda}{d+1} $ is an eigenvalue of
$\Delta_{(\Gamma_d,-s)}$.
\end{pro}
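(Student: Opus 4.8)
The plan is to obtain both equivalences as immediate consequences of the affine operator identities already recorded in the excerpt, so that nothing beyond elementary spectral mapping is needed.

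Recall that for a self-adjoint operator $A$ on a finite-dimensional inner product space and real scalars $\alpha\neq 0$, $\beta$, the operator $\alpha A+\beta\,\mathrm{Id}$ is self-adjoint, has the same eigenvectors as $A$, and $\nu$ is an eigenvalue of $A$ precisely when $\alpha\nu+\beta$ is an eigenvalue of $\alpha A+\beta\,\mathrm{Id}$. Applying this with $A=\Delta_{(\Gamma_d,s)}$, $\alpha=d+1$, $\beta=-d$ and invoking \eqref{asc3} yields the first equivalence: $\lambda$ is an eigenvalue of $\Delta_d^{up}$ if and only if $\lambda=(d+1)\nu-d$ for some eigenvalue $\nu$ of $\Delta_{(\Gamma_d,s)}$, i.e.\ if and only if $\tfrac{\lambda+d}{d+1}$ is an eigenvalue of $\Delta_{(\Gamma_d,s)}$; this is just \eqref{asc5} solved for $\nu$.

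For the second equivalence I would first establish the identity $\Delta_{(\Gamma_d,s)}+\Delta_{(\Gamma_d,-s)}=2\,\mathrm{Id}$, which is immediate from the defining formula \eqref{slap}: for every vertex $v$ of $\Gamma_d$,
\[
(\Delta_s f)(v)+(\Delta_{-s}f)(v)=\frac{1}{\deg v}\sum_{v'\sim v}\bigl(f(v)-s(vv')f(v')+f(v)+s(vv')f(v')\bigr)=2f(v).
\]
Hence $\Delta_{(\Gamma_d,-s)}=2\,\mathrm{Id}-\Delta_{(\Gamma_d,s)}$ is again an affine function of $\Delta_{(\Gamma_d,s)}$ with the same eigenvectors, so $\nu$ is an eigenvalue of $\Delta_{(\Gamma_d,s)}$ if and only if $2-\nu$ is an eigenvalue of $\Delta_{(\Gamma_d,-s)}$. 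Composing the two steps, $\lambda$ is an eigenvalue of $\Delta_d^{up}$ iff $\tfrac{\lambda+d}{d+1}$ is an eigenvalue of $\Delta_{(\Gamma_d,s)}$ iff $2-\tfrac{\lambda+d}{d+1}=\tfrac{d+2-\lambda}{d+1}$ is an eigenvalue of $\Delta_{(\Gamma_d,-s)}$, which is exactly the stated correspondence; since all three operators share a common eigenbasis the bijection is even multiplicity-preserving, consistent with Proposition \ref{prop:q+2 eigen}. I do not expect any genuine obstacle: the single point worth writing out is the identity $\Delta_s+\Delta_{-s}=2\,\mathrm{Id}$, everything else being the spectral mapping theorem for affine functions of a self-adjoint operator.
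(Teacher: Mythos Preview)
Your argument is correct and is exactly the reasoning the paper has in mind: the paper does not give a separate proof of this proposition but presents it as an immediate reading of the spectral table preceding it, which in turn is derived from \eqref{asc3}/\eqref{asc5} and the obvious relation between $\Delta_{(\Gamma_d,s)}$ and $\Delta_{(\Gamma_d,-s)}$. Your explicit derivation of $\Delta_s+\Delta_{-s}=2\,\mathrm{Id}$ from \eqref{slap} and the affine spectral mapping simply makes that implicit step rigorous.
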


In addition, analogously to Proposition  \ref{prop:q+2 eigen}
\begin{pro}
The multiplicity of the eigenvalue $0$ of $\Delta_{d}^{up}$ is
$\ge d+1$ (when the simplicial complex is pure, the multiplicity of the  eigenvalue $0$ of $\Delta_{d}^{up}$ is $d+1$ if and only if the simplicial complex is a simplex of dimension $d+1$). 
And the multiplicity of the eigenvalue $d+2$ of $\Delta_{d}^{up}$ agrees  with
the number of balanced components of $\Delta_{(\Gamma_d,-s)}$.
\end{pro}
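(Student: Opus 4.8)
The plan is to reduce both assertions to dimension counts, using the identity $\ker\Delta_d^{up}=\ker\delta_d$ (valid since $\Delta_d^{up}=(\delta_d)^\ast\delta_d\ge 0$, so $\Delta_d^{up}f=0\iff\|\delta_d f\|^2=0\iff\delta_d f=0$) together with the affine correspondences between $\Delta_d^{up}$ and $\Delta_{(\Gamma_d,\pm s)}$ recorded just above the statement; recall in particular that the multiplicity of $0$ for $\Delta_d^{up}$, the multiplicity of $\tfrac{d}{d+1}$ for $\Delta_{(\Gamma_d,s)}$, and $\dim\ker\delta_d$ all coincide.

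First, for the lower bound $\ge d+1$: from $\delta_d\delta_{d-1}=0$ and $H^d(\Sigma)=\ker\delta_d/\image\delta_{d-1}$ one gets $\dim\ker\delta_d=\mathrm{rank}\,\delta_{d-1}+b_d=\mathrm{rank}\,B_d+b_d$, so it suffices to show $\mathrm{rank}\,B_d\ge d+1$ whenever $\Sigma$ carries a $(d+1)$-simplex $\rho$ (which is implicit here, since the normalized $\Delta_d^{up}$ requires $\deg\sigma\ge1$). The $d+2$ facets of $\rho$ span, under $\partial_d$, a subspace of $C_{d-1}$ of dimension equal to $\mathrm{rank}\,\partial_d$ computed inside the full simplex $\overline\rho$; since $\overline\rho$ is acyclic in positive degrees and $\partial_{d+1}$ is injective on the one-dimensional $C_{d+1}(\overline\rho)$, that rank is $(d+2)-1=d+1$. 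Hence $\mathrm{rank}\,B_d\ge d+1$ and $\dim\ker\delta_d\ge d+1$; for $d=0$ the bound reads $\dim\ker\delta_0\ge1$, true as soon as $\Sigma$ is nonempty.

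For the equality case with $\Sigma$ pure of dimension $d+1$ and $d\ge1$: $\dim\ker\delta_d=d+1$ together with $\mathrm{rank}\,B_d\ge d+1$ forces $b_d=0$ and $\mathrm{rank}\,B_d=d+1$. Since the $d+2$ facets of $\rho$ already achieve this rank and their boundaries are supported on the $(d-1)$-faces of $\rho$, the boundary of every $d$-simplex of $\Sigma$ must be supported on $(d-1)$-faces of $\rho$. A short combinatorial argument — any $d$-simplex not among the facets of $\rho$ has a $(d-1)$-face that is not a face of $\rho$, using $d\ge1$ — then forces $\Sigma_d$ to be exactly the set of $d+2$ facets of $\rho$; purity makes every top simplex share all its facets with $\rho$, hence equal $\rho$, and closedness under subsets gives $\Sigma=\overline\rho$. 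The converse direction is the rank computation above applied to $\overline\rho$ itself. I expect this rigidity step — deducing $\Sigma_d=\{\text{facets of }\rho\}$ from $\mathrm{rank}\,B_d=d+1$ — to be the main obstacle; the rest is bookkeeping. (For $d=0$ only the ``if'' direction of the characterization survives, the ``only if'' being replaced by connectedness.)

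Finally, for the eigenvalue $d+2$: from \eqref{asc3} we have $\Delta_{(\Gamma_d,-s)}=\tfrac{1}{d+1}\big((d+2)\,\mathrm{Id}-\Delta_d^{up}\big)$, an affine function of $\Delta_d^{up}$, so the two operators share eigenspaces and the $(d+2)$-eigenspace of $\Delta_d^{up}$ is precisely $\ker\Delta_{(\Gamma_d,-s)}$. Its dimension equals the number of balanced components of $(\Gamma_d,-s)$: split $\Delta_{(\Gamma_d,-s)}$ over connected components, apply Lemma \ref{ss} to each to see that $0$ is attained exactly on the balanced ones, and on a balanced component switch to all-positive signs to identify the kernel with the (one-dimensional) constants. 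This is the mirror of Proposition \ref{prop:q+2 eigen} under $s\mapsto-s$ and needs no new idea.
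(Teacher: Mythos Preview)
Your argument is correct. The paper does not give a proof of this proposition; it merely declares it ``analogous to Proposition \ref{prop:q+2 eigen}'', which in turn is said to be ``an easy consequence of Lemma \ref{ss}''. Your treatment of the eigenvalue $d+2$ is exactly that analogue: the affine identity $\Delta_{(\Gamma_d,-s)}=\frac{1}{d+1}\big((d+2)\,\mathrm{Id}-\Delta_d^{up}\big)$ matches the $(d+2)$-eigenspace of $\Delta_d^{up}$ with $\ker\Delta_{(\Gamma_d,-s)}$, and Lemma \ref{ss} applied componentwise gives one kernel dimension per balanced component. This is precisely what the paper intends.

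For the first assertion the paper only records (just before Proposition \ref{prop:q+2 eigen}) that the multiplicity of $0$ for $\Delta_d^{up}$ equals $\dim\ker\delta_d$, but never argues why this is at least $d+1$ nor characterizes equality. Your route---writing $\dim\ker\delta_d=\mathrm{rank}\,B_d+b_d$ and bounding $\mathrm{rank}\,B_d\ge d+1$ via the acyclicity of a single $(d+1)$-simplex $\overline\rho$---is a clean self-contained argument that fills this gap. The rigidity step in the equality case is correct: once $\mathrm{rank}\,B_d=d+1$ the column space of $B_d$ coincides with the $(d+1)$-dimensional span of the boundaries of the facets of $\rho$, which sits inside the coordinate subspace indexed by $(d-1)$-faces of $\rho$; any foreign $d$-simplex (having a vertex outside $\rho$) would, for $d\ge1$, contribute a boundary with support outside that subspace. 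Purity then forces $\Sigma_{d+1}=\{\rho\}$ and hence $\Sigma=\overline\rho$. Your remark that the ``only if'' direction fails for $d=0$ (a connected graph has $\dim\ker\delta_0=1$ without being a single edge) is a pertinent observation on the proposition as stated.
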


\subsection{$p$-Laplacians}\label{plap}
An essential feature of Cheeger-type inequalities is that they connect an
$L^2$-quantity, the smallest nontrivial eigenvalue of the Laplacian, with an
$L^1$-quantity, the Cheeger constant. Therefore, it seems natural to
interpolate between the exponents 2 and 1. This can be done, as we shall
briefly explain now, but the case $p=1$, which is the case of most interest,
creates additional difficulties. But in fact, for $p=1$, the inequalities that
we are after become equalities, and this conversely is useful for deriving the
inequality for $p=2$.

Thus, similar to the up and down Laplacians on simplicial complexes (see Section \ref{sec:Sim-complex}), 
we shall now introduce the  $p$-Laplace operators on $C^d(\Sigma)$. For $p>1$,
we put $$\alpha_p:(t_1,t_2,\cdots)\mapsto
(|t_1|^{p-2}t_1,|t_2|^{p-2}t_2,\cdots).$$
Since this becomes undetermined for
$p=1$ when $t=0$, we need to modify the definition and let it be set valued,
that is, $$\alpha_1:(t_1,t_2,\cdots)\mapsto
\{(\xi_1,\xi_2,\cdots):\xi_i\in\mathrm{Sgn}(t_i)\},$$
with
 $$\mathrm{Sgn}(t):=\begin{cases}
 \{1\} & \text{if } t>0,\\
 [-1,1] & \text{if }t=0,\\
 \{-1\} & \text{if }t<0,
 \end{cases}
$$
We can then define 
 the $d$-th up $p$-Laplace  operator
 $$L^{up}_{d,p}:=\delta_d^*\alpha_{p}\delta_d,$$
 having for $f\in C^d(\Sigma)$,
 $$L^{up}_{d,p}f=B_{d+1} \alpha_p(B_{d+1}^\top f)$$
 where we identify $\delta_d$ with its standard matrix representation $B_{d+1}^\top $. 
 Analogously, we can also define 
 the $d$-th down $p$-Laplace  operator
 $L^{down}_{d,p}:=\delta_{d-1}\alpha_p\delta_{d-1}^*$,
 having  for $f\in
 C^d(\Sigma)$,
 $L^{down}_{d,p}f=B_d^\top \alpha_p (B_df)$,
 and  the $d$-th
  $p$-Laplace  operator as
 $ L_{d,p}:=L^{up}_{d,p}+ L^{down}_{d,p}$.

The eigenvalue problem of $L^{up}_{d,p}$ is to find  real numbers $\lambda$ and nonzero functions $f:\Sigma_d\to\R$ satisfying 
\[L^{up}_{d,p}f=\lambda\alpha_p(f),\text{ for the case of }p>1,\]
or  
\[0\in L^{up}_{d,1}f-\lambda\alpha_1(f),\text{ for the case of }p=1.\]
In the case of $d=0$, the above nonlinear eigenproblem is actually the spectral problem for the graph $p$-Laplacian \cite{Amghibech,HeinBuhler2009,HeinBuhler2010,CSZ17}. 
Of most interest for us will be  the min-max eigenvalues, that is, those that
can be obtained from Rayleigh quotients as in Lemma \ref{rayleigh}. Thus, we
look for 
\begin{equation}
  \label{plap1}
\lambda_i(L^{up}_{d,p}):=\inf_{\gamma(S)\ge i}\sup_{f\in
  S}\frac{\|B_{d+1}^\top f\|_p^p}{ \|f\|_{p}^p},\;\;i=1,2,\cdots,n,
\end{equation}
where $n=\#\Sigma_d$, and 
\begin{equation*}
\gamma(S):=\begin{cases}
\min\limits\{k\in\mathbb{Z}^+: \exists\; \text{odd continuous map}\; \varphi: S\to \mathbb{S}^{k-1}\} & \text{if}\; S 
\ne\emptyset,\\
0 & \text{if}\; S  
=\emptyset,
\end{cases}
\end{equation*}
denotes the 
Krasnoselskii 
genus of a centrally symmetric set $S\subset\R^n\setminus \{\mathbf{0}\}
$.  As already indicated, the
important case of \eqref{plap1} will be $p=1$.

Obviously, analogous constructions  work  
for $L^{down}_{d,p}$.
~\\

{\bf Acknowledgements.} The authors are  grateful to the anonymous referee for comments and suggestions which greatly helped us improve the quality of the presentation of our paper. 
Dong Zhang is supported by grants from the National Natural Science Foundation of China (No. 12401443).

\section{Cheeger-type inequalities on $d$-faces of simplicial complexes}

\subsection{Spectral gap from $d+2$}\label{sec:gap-d+2}
Here, we shall build upon  Sections \ref{sec:Sim-complex} and
\ref{sec:simp-sign}. 
Again, the key is to convert a Cheeger problem for
higher dimensional simplices into one for signed graphs. We thus suggest the following Cheeger-type constants.

As always, we consider a simplicial complex $\Sigma$,  
  and we denote the collection of its $d$-dimensional 
  simplices by $\Sigma_d$. 
 We shall need a slight modification of the construction in Section
 \ref{sec:simp-sign}. Hereafter, we will   consider the signed graph
 $(\Gamma_d,s)$ on the vertex set $\Sigma_d$, under the up adjacency relation,
 and with the sign function
 \begin{equation}\label{sign}
   s([\tau],[\tau'])=
   \sgn([\tau],\partial [\sigma])\sgn([\tau'],\partial [\sigma])
 \end{equation}
 which is the opposite of the sign function defined in \eqref{asc2}, where $\sigma$ used in \eqref{sign} is the unique $(d+1)$-simplex such that both $\tau$ and $\tau'$ are facets of $\sigma$. 
  
  For disjoint $A,A'\subset \Sigma_d$, let $|E^+(A,A')|=\#\{\{\tau,\tau'\}:\tau\in A,\tau'\in A',s([\tau],[\tau'])=1\}$  and $|E^-(A)|=\#\{\{\tau,\tau'\}:\tau,\tau'\in A,s([\tau],[\tau'])=-1\}$. Let 
$$\beta(A,A')=\frac{2\left(|E^-(A)|+|E^-(A')|+|E^+(A,A')|\right)+|\partial(A\sqcup A')|}{\vol(A\sqcup A')}$$
where $|\partial A|$ is the number of the edges of $(\Gamma_d,s)$ that cross $A$ and $\Sigma_d\setminus A$,   $\vol(A)=\sum_{\tau\in A}\deg \tau$ and $\deg \tau=\#\{\sigma\in \Sigma_{d+1}:\tau\subset \sigma\}$. 

Then we introduce the $k$-th  Cheeger constant   on $\Sigma_d$:
$$h_k(\Sigma_d)=\min\limits_{\text{disjoint } A_1,A_2,\ldots,A_{2k-1},A_{2k}\text{ in }\Sigma_d}\max\limits_{1\le i\le k}\beta(A_{2i-1},A_{2i}).$$ 
 $h_k(\Sigma_d)=0$ if and only if $(\Gamma_d,s)$ has exactly $k$ balanced components.

\begin{remark}
 For $d=0$,  the constant $h_k(\Sigma_0)$ reduces to the $k$-way Cheeger
 constant 
 of a graph  \cite{LGT12}.
\end{remark}

\begin{theorem}\label{thm:anti-signed-Cheeger}
For any simplicial complex and every $d\ge 0$,
\begin{equation}\label{eq:Cheeger-1-complex}
\frac{ h_1(\Sigma_d)^2}{2(d+1)}\le d+2-\lambda_n(\Delta^{up}_d)\le 2h_1(\Sigma_d),
\end{equation}
where $n=\#\Sigma_d$. Moreover, there exists an absolute constant $C$  such that for any simplicial complex, and for any $k\ge 1$, 
\begin{equation}\label{eq:Cheeger-k-complex}
\frac{ h_k(\Sigma_d)^2}{Ck^6(d+1)}\le d+2-\lambda_{n+1-k}(\Delta^{up}_d)\le 2h_k(\Sigma_d).
\end{equation}

\end{theorem}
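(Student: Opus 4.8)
The plan is to deduce Theorem~\ref{thm:anti-signed-Cheeger} entirely from the signed-graph machinery already set up, by transporting everything through the identity \eqref{asc3}, i.e.\ $\Delta_{d}^{up}=(d+1)\Delta_{(\Gamma_d,s)}-d\,\mathrm{Id}$, and then applying the known signed Cheeger inequalities for $(\Gamma_d,-s)$. First I would observe that the sign function \eqref{sign} used to define $h_k(\Sigma_d)$ is exactly the opposite of the one in \eqref{asc2}, so the signed graph relevant to $d+2-\lambda$ is the \emph{antipodal} graph $(\Gamma_d,-s)$; indeed, by the spectral correspondence table in Section~\ref{sec:simp-sign}, $\lambda$ is an eigenvalue of $\Delta_{d}^{up}$ iff $\frac{d+2-\lambda}{d+1}$ is an eigenvalue of $\Delta_{(\Gamma_d,-s)}$. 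Consequently $d+2-\lambda_{n+1-k}(\Delta_{d}^{up})=(d+1)\lambda_k(\Delta_{(\Gamma_d,-s)})$, where on the right $\lambda_k$ denotes the $k$-th smallest eigenvalue — the key bookkeeping point being that the decreasing enumeration of $\Delta_{d}^{up}$ translates into the increasing enumeration of $\Delta_{(\Gamma_d,-s)}$ under the orientation-reversing affine map $\lambda\mapsto\frac{d+2-\lambda}{d+1}$. This step requires care with multiplicities but is otherwise a direct translation.

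Next I would check that the Cheeger constant $h_k(\Sigma_d)$ is literally the $k$-way signed Cheeger constant $h_k^{-s}$ of $(\Gamma_d,-s)$ as defined in Section~\ref{signed}: comparing the definition of $\beta(A,A')$ with $\beta^s(V_1,V_2)$, the sets $E^+(A,A')$ and $E^-(A)$ here are defined using $s([\tau],[\tau'])=+1$ and $=-1$ respectively, which — since the graph of interest carries the sign $-s$ — match $E^+$ and $E^-$ for $(\Gamma_d,-s)$ after the obvious relabelling. Thus $h_k(\Sigma_d)=h_k^{-s}(\Gamma_d)$ and $h_1(\Sigma_d)=h^{-s}(\Gamma_d)$ verbatim. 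With these two identifications in hand, \eqref{eq:Cheeger-1-complex} follows from the signed Cheeger inequality \eqref{atay} applied to $(\Gamma_d,-s)$: that inequality gives $\tfrac12\lambda_1(\Delta_{(\Gamma_d,-s)})\le h^{-s}\le\sqrt{2\lambda_1(\Delta_{(\Gamma_d,-s)})}$; substituting $\lambda_1(\Delta_{(\Gamma_d,-s)})=\frac{d+2-\lambda_n(\Delta_d^{up})}{d+1}$ and squaring the right inequality yields exactly $\frac{h_1(\Sigma_d)^2}{2(d+1)}\le d+2-\lambda_n(\Delta_d^{up})\le 2h_1(\Sigma_d)$. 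Similarly \eqref{eq:Cheeger-k-complex} follows from Theorem~\ref{thm:signed-Cheeger} applied to $(\Gamma_d,-s)$, namely $\tfrac12\lambda_k\le h_k^{-s}\le Ck^3\sqrt{\lambda_k}$; substituting and squaring gives $\frac{h_k(\Sigma_d)^2}{2C^2k^6(d+1)}\le d+2-\lambda_{n+1-k}(\Delta_d^{up})\le 2h_k(\Sigma_d)$, which is of the claimed form after renaming the absolute constant.

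The main obstacle, and the only place where something genuinely needs to be verified rather than quoted, is the index/multiplicity translation in the first step: one must be sure that the $i$-th eigenvalue in the min--max enumeration $\lambda_i(\Delta_d^{up})$ corresponds under \eqref{asc5} and the reflection to the correctly indexed eigenvalue of $\Delta_{(\Gamma_d,-s)}$, including when $\Delta_{(\Gamma_d,-s)}$ has a large kernel (equivalently, when $\Delta_d^{up}$ has the eigenvalue $d+2$ with high multiplicity) — see Proposition~\ref{prop:q+2 eigen}. Once the dictionary $\lambda_{n+1-k}(\Delta_d^{up})\leftrightarrow\lambda_k(\Delta_{(\Gamma_d,-s)})$ is nailed down and the constant identification $h_k(\Sigma_d)=h_k^{-s}(\Gamma_d)$ is confirmed, the rest is a purely mechanical substitution into \eqref{atay} and Theorem~\ref{thm:signed-Cheeger}. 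A minor additional point is to note that $(\Gamma_d,s)$ is never balanced for $d>0$ (as already observed in the excerpt), so $\lambda_n(\Delta_d^{up})<d+2$ unless $(\Gamma_d,-s)$ has a balanced component, which is consistent with the stated equivalence $h_k(\Sigma_d)=0$ iff $(\Gamma_d,s)$ has exactly $k$ balanced components — again just a restatement of Lemma~\ref{ss}.
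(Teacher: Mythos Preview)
Your approach is essentially the paper's: pass to the associated signed graph, match eigenvalues via the affine relation, match Cheeger constants, and invoke \eqref{atay} and Theorem~\ref{thm:signed-Cheeger}. The paper establishes the eigenvalue correspondence by a direct Rayleigh-quotient computation rather than quoting the spectral table, but that difference is cosmetic.

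There is, however, one concrete slip. You assert $h_k(\Sigma_d)=h_k^{-s}(\Gamma_d)$ ``verbatim'', but the two volumes differ by a factor of $d+1$: in the definition of $\beta(A,A')$ one has $\vol(A)=\sum_{\tau\in A}\deg\tau$ with $\deg\tau=\#\{\sigma\in\Sigma_{d+1}:\tau\subset\sigma\}$, whereas the signed-graph quantity $\beta^s$ uses the \emph{graph} degree of $\tau$ in $\Gamma_d$, which is $\widetilde{\deg}\,\tau=(d+1)\deg\tau$ (each $(d+1)$-simplex containing $\tau$ contributes $d+1$ distinct neighbours, and the containing $(d+1)$-simplex is uniquely determined by the pair $\tau,\tau'$). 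The numerators agree, so the correct relation is
\[
h_k^{-s}(\Gamma_d)=\frac{1}{d+1}\,h_k(\Sigma_d),
\]
exactly as the paper uses. With your stated identification a careful substitution would give $\frac{(d+1)h_1^2}{2}\le d+2-\lambda_n\le 2(d+1)h_1$, not \eqref{eq:Cheeger-1-complex}; the missing factor $\frac{1}{d+1}$ on the Cheeger side is precisely what cancels the $(d+1)$ coming from $\lambda_1(\Delta_{(\Gamma_d,-s)})=\frac{d+2-\lambda_n}{d+1}$ to produce the sharp constants. Once you correct this normalization, the rest of your argument goes through verbatim and coincides with the paper's proof.
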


\begin{proof}
We first show
\begin{equation}
  \label{r2}
  d+2-\lambda_{n-i+1}(\Delta^{up}_d)=(d+1)\lambda_i(\Delta_{(\Gamma_d,s)}),\;\;\;i=1,\ldots,n.
  \end{equation}
We have
\begin{eqnarray*}
 &&(d+2)\sum_{\tau\in \Sigma_d}\deg \tau\cdot f(\tau)^2-\sum\limits_{\sigma\in
  \Sigma_{d+1}}\left(\sum_{\tau\in
    \Sigma_d,\tau\subset\sigma}\sgn([\tau],\partial[\sigma])f(\tau)\right)^2\\
  &=&\sum_{[\tau]\sim[\tau']}\left(f(\tau)-\sgn([\tau],\partial[\sigma])\sgn([\tau'],\partial[\sigma])f(\tau'))\right)^2.
  \end{eqnarray*}
Recalling \eqref{sign}, this yields  the identity 
$$d+2-\frac{\sum\limits_{\sigma\in \Sigma_{d+1}}\left(\sum_{\tau\in
      \Sigma_d,\tau\subset\sigma}\sgn([\tau],\partial[\sigma])f(\tau)\right)^2}{\sum_{\tau\in
    \Sigma_d}\deg \tau\cdot
  f(\tau)^2}=(d+1)\frac{\sum_{[\tau]\sim[\tau']}\left(f(\tau)-s(\tau,\tau')f(\tau'))\right)^2}{\sum_{\tau\in
    \Sigma_d}\widetilde{\deg}\,\tau f(\tau)^2}$$
for  the  Rayleigh quotients, 
where $[\tau]\sim[\tau']$ represents an  edge in the underlying  signed graph
${(\Gamma_d,s)}$, and $\widetilde{\deg}\,\tau:=(d+1)\deg \tau$ is the degree of
$\tau$ in  ${(\Gamma_d,s)}$. (Whenever $\tau \subset \sigma \in \Sigma_{d+1}$,
this connects $\tau$ with $d+1$ other $d$-simplices.) Recalling Lemma
\ref{rayleigh}, this shows \eqref{r2}. 

Moreover, since  $\frac{1}{d+1}h_k(\Sigma_d)$ is the $k$-th Cheeger constant of the signed graph $(\Gamma_d,s)$, by 
the Cheeger inequality \eqref{atay} for signed graphs, 
we have  $$\frac{\lambda_1(\Delta_{(\Gamma_d,s)})}{2}\le \frac{h_1(\Sigma_d)}{d+1}\le \sqrt{2\lambda_1(\Delta_{(\Gamma_d,s)})}.$$ And by Theorem \ref{thm:signed-Cheeger},  there exists  an absolute constant $C$  such that for any signed graph and any $k\ge 1$, $$\frac{\lambda_k(\Delta_{(\Gamma_d,s)})}{2}\le \frac{h_k(\Sigma_d)}{d+1}\le Ck^3 \sqrt{\lambda_k(\Delta_{(\Gamma_d,s)})}.$$ In consequence, we obtain
$$ \frac{d+2-\lambda_{n}(\Delta^{up}_d)}{2}\le h_1(\Sigma_d)\le \sqrt{2(d+1)(d+2-\lambda_{n}(\Delta^{up}_d))}$$
and
$$\frac{d+2-\lambda_{n+1-k}(\Delta^{up}_d)}{2}\le h_k(\Sigma_d)\le Ck^3\sqrt{(d+1)(d+2-\lambda_{n+1-k}(\Delta^{up}_d))} .$$
Then, we have verified \eqref{eq:Cheeger-1-complex} and \eqref{eq:Cheeger-k-complex}. 
\end{proof}
By Theorem \ref{thm:anti-signed-Cheeger}, $\lambda_n(\Delta^{up}_d)=d+2$ if
and only if  $h_1(\Sigma_d)=0$,  if and only if the associated  signed graph
${(\Gamma_d,s)}$ has a  balanced component. The latter fact follows from
Proposition \ref{prop:q+2 eigen}, remembering that the sign we are currently using
is the opposite of the one in that proposition.

\subsection{Spectral gap from 0}
\label{sec:gap-0}
Theorem \ref{thm:anti-signed-Cheeger} of  the previous section contains the estimates for the spectral gap from 
$d+2$. 
However, the more  important  estimate is the one for the spectral gap from 
$0$, namely, the Cheeger-type estimate for the first non-trivial eigenvalue of
the Eckmann Laplacian. 
For that purpose, we shall now introduce a new  Cheeger  constant. The key point is that we consider generalized (i.e., with both positive and negative multiplicities) multisets
  of $d$-simplices, in order to be able to take account of (positive or
  negative) multiplicities, as these also enter into the coboundary relations
  and therefore implicitly into the
  eigenvalues.
  
\begin{itemize}
\item[(D1)] A \emph{(generalized) multiset} is a  pair $(S,m)$, where $S$ is the underlying set of the
  multiset, formed from its distinct elements, and $m:S\to\mathbb{Z}$ is an
  integer-valued function, giving the \emph{multiplicity}. We point out that
  this multiplicty is allowed to also take negative values, in order to
  account for orientations. For convenience, we usually write $S$ instead of
  $(S,m)$, and simply speak of a multiset,  and we use $|S|
:=
\sum_{s\in S}|m(s)|$ to indicate the \emph{size} of the multiset $S$. 

As the underlying set, we take  $\Sigma_d$. We write $S\subset_M \Sigma_d$
when  $S$ is a multiset on  the underlying set $\Sigma_d$ with
multiplicities in $\{-M,\ldots,0,\ldots,M\}$. The coboundary  $\partial^*_{d+1}S$ of such a multiset $S$ is defined as the multiset of  all $(d+1)$-simplices that have a member of $S$ in its boundary, together with the appropriate multiplicities. Thus, 
  each $\sigma\in \Sigma_{d+1}$ has the multiplicity $\sum_{\tau\in
    \Sigma_d}m(\tau)\mathrm{sgn}([\tau],\partial[\sigma])$, where $m(\tau)$ is
  the multiplicity of $\tau$ in $S$. And the support of $\partial^*_{d+1}S$
  then consists of all such simplices with non-zero multiplicity. We  define
  $\vol(S):=\sum_{\tau\in \Sigma_d}\deg \tau\cdot |m(\tau)|$  as the volume  of the multiset $S$.  
\begin{defn} For  $d\ge 0$, 
\begin{equation}\label{eq:combinatorial-h(S_d)}h(\Sigma_d)=\min\limits_{\substack{S\subset_M \Sigma_d \\ S\neq \partial^*_{d}(T),\forall T\subset_M \Sigma_{d-1}}}\frac{|\partial^*_{d+1} S|}{\min\limits_{S'\ne\emptyset:\partial^*_{d+1}S'=\partial^*_{d+1}S}\vol(S')}
\end{equation}
is constant when $M$ is sufficiently large. And for such a large number $M$, we call $h(\Sigma_d)$ the \emph{Cheeger constant} on $\Sigma_d$.
\end{defn}
\item[(D2)]  We shall now give several different definitions of
    $h(\Sigma_d)$, and then  show that these definitions all
    agree. First, we describe the Cheeger constant  as the
  $\mathbb{Z}$-expander:
  \begin{defn}
Let $$h(\Sigma_d)=\min\limits_{\phi\in
  C^d(\Sigma,\mathbb{Z})\setminus\mathrm{Im\,}\delta}\frac{\|\delta\phi\|_1}{\min\limits_{\psi\in\mathrm{Im\,}\delta}\|\phi+\psi\|_{1,\deg}}.$$
\end{defn}
We point out that in contrast to the
definition \eqref{skm} of $h^d(\Sigma)$, here  we use  $\mathbb{Z}$- instead of   $\mathbb{Z}_2$-coefficients, and we
use\ the  (weighted) $l^1$-norm,   where $\|\phi \|_{1,\deg}:=\sum_{\tau\in
  \Sigma_d}\deg \tau\cdot |\phi(\tau)|$, instead of the Hamming norm.

\item[(D3)]  
Anticipating Section \ref{sec:p-lap}, and similar to the graph 1-Laplacian, we define the up 1-Laplacian eigenvalue problem on $\Sigma_d$ as the nonlinear eigenvalue problem 
\begin{equation}\label{eq:1-lap}
 0\in\nabla \|B_{d+1}^\top\mathbf{x}\|_1-\lambda\nabla\|\mathbf{x}\|_{1,\deg}   
\end{equation}
where $\nabla $ represents the usual subgradient \cite{Clarke}. Here, given a convex function $F$ on a Hilbert space, the subgradient of $F$ at $\mathbf{x}$, denoted by $\nabla F(\mathbf{x})$,  is defined as $\nabla F(\mathbf{x})=\{\mathbf{v}\;:F(\mathbf{z})-F(\mathbf{x})\ge \langle \mathbf{v},\mathbf{z}-\mathbf{x}\rangle\}$.

We let $\lambda_{I_d}(\Delta^{up}_{d,1})$ be the smallest non-trivial eigenvalue of the up 1-Laplacian, where  $I_d:=\dim \mathrm{Image}(B_d^\top)+1=\mathrm{rank}(B_d)+1$. 
To describe  $\lambda_{I_d}(\Delta^{up}_{d,1})$, we first introduce orthogonality w.r.t.\ a given norm. 
        For a norm $\|\cdot\|$ on a real linear space with an inner product $\langle \cdot ,\cdot \rangle$,  we say that  $\mathbf{x}$ is \emph{$\|\cdot\|$-orthogonal} to $\mathbf{y}$ if there exists $\mathbf{u}\in \nabla\|\mathbf{x}\|$ satisfying $\langle \mathbf{u},\mathbf{y}\rangle=0$. We say $\mathbf{x}$ is $\|\cdot\|$-orthogonal to a non-empty set $Y$ if $\mathbf{x}$ is $\|\cdot\|$-orthogonal to all    $\mathbf{y}\in Y$. Clearly, if $\|\cdot\|=\|\cdot\|_2$ is the standard $l^2$-norm, then the $\|\cdot\|_2$-orthogonality  reduces to the usual orthogonality w.r.t.\ the standard inner product. 
\begin{defn}Let
$$h(\Sigma_d)=\lambda_{I_d}(\Delta^{up}_{d,1})=
\min\limits_{\mathbf{x}\bot^1 \mathrm{Image}(B_d^\top)}\frac{\|B_{d+1}^\top\mathbf{x}\|_1}{\|\mathbf{x}\|_{1,\deg}}$$
where $\mathbf{x}\bot^1\mathrm{Image}(B_d^\top)$ indicates that $\mathbf{x}$ is $\|\cdot\|_{1,\deg}$-orthogonal to $\mathrm{Image}(B_d^\top)$, i.e.\ $\mathbf{u}\in\mathrm{Image}(B_d^\top)^\bot$ for some $ \mathbf{u}\in\nabla \|\mathbf{x}\|_{1,\deg}$.
\end{defn}

\item[(D4)] The norm $\|\cdot\|_{1,\deg}$ on $C^{d}(\Sigma)$ induces a quotient norm on
$C^{d}(\Sigma)/\mathrm{image}(\delta_{d-1})$, which will be  denoted by $\|\cdot\|$
for simplicity.  More precisely, for any equivalence class $[\mathbf{x}]\in C^{d}(\Sigma)/\mathrm{image}(\delta_{d-1})$,  let $\| [\mathbf{x}]\|=\inf\limits_{x'\in [x]}\|\mathbf{x}'\|_{1,\deg}$. Then 
$$h(\Sigma_d)=
\min\limits_{0\ne [\mathbf{x}]\in C^{d}(\Sigma)/\mathrm{image}(\delta_{d-1})}\frac{\|\delta_d\mathbf{x}\|_1}{\| [\mathbf{x}]\|}=\min\limits_{0\ne [\mathbf{x}]\in C^{d}(\Sigma,\mathbb{Z})/\mathrm{image}(\delta_{d-1})}\frac{\|\delta_d\mathbf{x}\|_1}{\| [\mathbf{x}]\|}.$$
\begin{defn}
In the case of $\tilde{H}^{d}(\Sigma,\R)= 0$, let 
$$h(\Sigma_d)=\min\limits_{\mathbf{y}\in \mathrm{image}(\delta_{d})\setminus\{\mathbf{0}\}}\frac{\|\mathbf{y}\|_1}{\|\mathbf{y}\|_{\mathrm{fil}}}=\frac{1}{\max\limits_{\mathbf{y}\in \mathrm{image}(\delta_{d})\setminus\{\mathbf{0}\}}\|\mathbf{y}\|_{\mathrm{fil}}/\|\mathbf{y}\|_1}=\frac{1}{\|\delta_d^{-1}\|_{\mathrm{fil}}}$$
where $\|\mathbf{y}\|_{\mathrm{fil}}:=
\inf\limits_{x\in\delta_d^{-1} (\mathbf{y})}\|\mathbf{x}\|_{1,\deg}$ is the filling norm of $\mathbf{y}$, and\\ 
$\|\delta_d^{-1}\|_{\mathrm{fil}}:=\max\limits_{\mathbf{y}\in \mathrm{image}(\delta_{d})\setminus\{\mathbf{0}\}}\|\mathbf{y}\|_{\mathrm{fil}}/\|\mathbf{y}\|_1$ is called the  filling profile by Gromov (see Section 2.3 in \cite{Gromov}).  
\end{defn}
\end{itemize}

\begin{theorem}
The four definitions in (D1)--(D4)  are equivalent.
\end{theorem}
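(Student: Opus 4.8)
The plan is to route every definition through a dictionary between generalized multisets and integer cochains, prove (D2)$\Leftrightarrow$(D4)$\Leftrightarrow$(D3) first, and then attack (D1)$\Leftrightarrow$(D2) separately. To a multiset $S\subset_M\Sigma_d$ with multiplicity function $m$ I associate the integer cochain $\phi\in C^d(\Sigma,\mathbb Z)$ with $\phi([\tau])=m(\tau)$, which is a bijection onto the cochains with $\|\phi\|_\infty\le M$. Under it, $\vol(S)=\|\phi\|_{1,\deg}$, the coboundary $\partial^*_{d+1}S$ corresponds to $\delta_d\phi=B_{d+1}^\top\phi$ with $|\partial^*_{d+1}S|=\|\delta_d\phi\|_1$, and $S=\partial^*_d(T)$ for some $T\subset_M\Sigma_{d-1}$ iff $\phi$ is an integral coboundary $\delta_{d-1}\psi$ with $\|\psi\|_\infty$ bounded (a bounded integral preimage always exists since $B_d$ is a fixed integer matrix). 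The uniform engine throughout is that $\|\cdot\|_1$, $\|\cdot\|_{1,\deg}$, the induced quotient norm on $C^d(\Sigma)/\mathrm{Im}\,\delta_{d-1}$ and the filling norm on $\mathrm{Im}\,\delta_d$ are all polyhedral with rational vertex data, so the extremum of a ratio of two of them is attained at a rational — hence, after scaling, an integral — point.

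For (D2)$\Leftrightarrow$(D4): the quotient norm is $\|[\mathbf x]\|=\inf_{\psi\in\mathrm{Im}\,\delta_{d-1}}\|\mathbf x+\psi\|_{1,\deg}$, so the first formula of (D4) is literally the real-coefficient version of the ratio in (D2); the polyhedrality engine, together with the invariance of the ratio under $\mathbf x\mapsto c\mathbf x$ and $\mathbf x\mapsto\mathbf x+\psi$, shows the real and integral minima coincide, which is the second equality in (D4). When $\tilde H^d(\Sigma,\R)=0$ we have $\ker\delta_d=\mathrm{Im}\,\delta_{d-1}$, so $[\mathbf x]\mapsto\delta_d\mathbf x$ is a linear isomorphism $C^d(\Sigma)/\mathrm{Im}\,\delta_{d-1}\to\mathrm{Im}\,\delta_d$ carrying $\|[\mathbf x]\|$ to $\|\delta_d\mathbf x\|_{\mathrm{fil}}$; substituting $\mathbf y=\delta_d\mathbf x$ turns the quotient formula into $\min_{\mathbf y\in\mathrm{Im}\,\delta_d\setminus\{0\}}\|\mathbf y\|_1/\|\mathbf y\|_{\mathrm{fil}}$, and passing to the reciprocal gives $1/\|\delta_d^{-1}\|_{\mathrm{fil}}$.

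For (D3)$\Leftrightarrow$(D4): by convex subdifferential calculus, $0$ minimizes $\psi\mapsto\|\mathbf x+\psi\|_{1,\deg}$ over $W=\mathrm{Im}(B_d^\top)=\mathrm{Im}\,\delta_{d-1}$ iff $0\in\nabla\|\mathbf x\|_{1,\deg}+W^\bot$, i.e.\ iff $\mathbf x\bot^1 W$. So the feasible $\mathbf x$ in (D3) are exactly the $\|\cdot\|_{1,\deg}$-minimal representatives of the classes in $C^d(\Sigma)/\mathrm{Im}\,\delta_{d-1}$; on such $\mathbf x$ one has $\|\mathbf x\|_{1,\deg}=\|[\mathbf x]\|$ while $\|B_{d+1}^\top\mathbf x\|_1$ depends only on $[\mathbf x]$, so the minimum in (D3) equals the first formula of (D4). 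The identity $h(\Sigma_d)=\lambda_{I_d}(\Delta^{up}_{d,1})$ is the $p=1$ instance of the variational eigenvalue formula \eqref{plap1} for the eigenproblem \eqref{eq:1-lap}: since $\mathrm{Im}(B_d^\top)\subset\ker\delta_d$ has dimension $I_d-1$ and lies in the zero-eigenspace, the genus-$\ge I_d$ infimum-of-suprema collapses to the orthogonality-constrained Rayleigh quotient written in (D3).

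It remains to prove (D1)$\Leftrightarrow$(D2), where a real subtlety appears: (D1)'s inner minimum runs over $\phi+\ker\delta_d$ while (D2)'s runs over $\phi+\mathrm{Im}\,\delta_{d-1}$, and these agree only modulo cohomology. If $\tilde H^d(\Sigma,\R)\ne0$, pick a primitive integral $\phi_0\in\ker\delta_d\setminus\mathrm{Im}\,\delta_{d-1}$; then $\delta_d\phi_0=0$ forces both constants to be $0$, and for (D1) this witness is admissible once $M\ge\|\phi_0\|_\infty$. If $\tilde H^d(\Sigma,\R)=0$, then $\ker\delta_d=\mathrm{Im}\,\delta_{d-1}$, so for every feasible $\phi$ the two inner minimizations agree, and since $\|\cdot\|_{1,\deg}$ is a genuine norm the coset minimizer — and, by the polyhedrality engine, an integral one attaining it — lies in a bounded set; likewise the outer optimizer $\phi$ may be taken integral and bounded (its coboundary being a bounded rational vector). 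Hence, once $M$ exceeds the sup-norms of these finitely many bounded witnesses and of the bounded integral preimages under $\delta_{d-1}$ needed to encode the constraint $\phi\notin\mathrm{Im}\,\delta_{d-1}$, the bounded-multiplicity minimum in (D1) stabilizes and equals the unrestricted integral minimum, which is (D2) — exactly the assertion in (D1) that $h(\Sigma_d)$ becomes constant for $M$ large. I expect this last step — the interplay of the cohomological case split with the ``$M$ large enough'' bookkeeping, resting on the polyhedrality/rationality argument that also powers the integer-equals-real equalities in (D2) and (D4) — to be the main obstacle.
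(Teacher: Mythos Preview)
Your proposal is correct and follows essentially the same route as the paper: the multiset/cochain dictionary, the subdifferential characterization of $\bot^1$ as picking out $\|\cdot\|_{1,\deg}$-minimal coset representatives (hence (D3)$\Leftrightarrow$(D4)), the polyhedrality/rationality engine for the real-versus-integer equalities, and the cohomology case split $\tilde H^d(\Sigma,\R)=0$ versus $\ne 0$ to reconcile (D1)'s inner minimum over $\ker\delta_d$ with (D2)'s over $\mathrm{Im}\,\delta_{d-1}$. The only notable difference is packaging: the paper outsources the (D2)/(D3)/(D4) equivalences and the extreme-point rationality to Theorem~2.1, Proposition~2.3 and Theorem~2.4 of \cite{Jost/Zhang21c}, whereas you argue them directly --- your one hand-wavy sentence is the ``collapse'' of the genus-$I_d$ min--max formula \eqref{plap1} to the $\bot^1$-constrained minimum, which the paper likewise does not prove in-line but absorbs into the cited Theorem~2.1.
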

 
\begin{proof}
We start with (D3). Since $\mathrm{Image}(B_d^\top)\subset\mathrm{Ker}(B_{d+1}^\top)$, by Theorem 2.1 in \cite{Jost/Zhang21c},
\begin{align}
\lambda_{I_d}(\Delta^{up}_{d,1})&=\inf\limits_{\mathbf{x}\in \R^n\setminus \mathrm{Image}(B_d^\top)}\frac{\|B_{d+1}^\top\mathbf{x}\|_1}{\inf\limits_{\mathbf{z}\in \mathrm{Image}(B_d^\top)}\|\mathbf{x}+\mathbf{z}\|_{1,\deg}}\label{eq:B-inf}
\\&=\inf\limits_{[\mathbf{x}]\in \R^n/ \mathrm{Image}(B_d^\top)}\frac{\|B_{d+1}^\top\mathbf{x}\|_1}{\| [\mathbf{x}]\|}\label{eq:B-quotient}
\\&=\inf\limits_{\mathbf{x}\in \R^n:\nabla \|\mathbf{x}\|_{1,\deg}\bigcap\mathrm{Image}(B_d^\top)^\bot\ne\emptyset}\frac{\|B_{d+1}^\top\mathbf{x}\|_1}{\|\mathbf{x}\|_{1,\deg}} \label{eq:B-constraint} 
\end{align}
where $n=\#\Sigma_d$, 
$$[\mathbf{x}]=\left\{\mathbf{y}\in\R^n:\mathbf{y}-\mathbf{x}\in\mathrm{Image}(B_d^\top)\right\}$$
and $$\| [\mathbf{x}]\|=\inf\limits_{\mathbf{x}'\in  [\mathbf{x}]}\|\mathbf{x}+\mathbf{z}\|_{1,\deg}.$$ 
In fact, the definition of the norm $\|\cdot\|$ on the quotient space $\R^n/ \mathrm{Image}(B_d^\top)$ implies  $$\|[\mathbf{x}]\|=\inf\limits_{\mathbf{z}\in \mathrm{Image}(B_d^\top)}\|\mathbf{x}+\mathbf{z}\|_{1,\deg}.$$ 
Moreover,  Proposition 2.3 in \cite{Jost/Zhang21c} yields that $\|[\mathbf{x}]\|=\|\mathbf{x}\|_{1,\deg}$ if and only if $\mathbf{x}$ satisfies $\nabla \|\mathbf{x}\|_{1,\deg}\bigcap\mathrm{Image}(B_d^\top)^\bot\ne\emptyset$, 
that is, the minimization problem  \[\inf\limits_{\mathbf{x}'\in\R^n:\mathbf{x}'-\mathbf{x}\in \mathrm{Image}(B_d^\top)}\|\mathbf{x}'\|_{1,\deg}\] reaches its minimum at some points  in the set  $\{\mathbf{x}\in \R^n:\nabla \|\mathbf{x}\|_{1,\deg}\bigcap\mathrm{Image}(B_d^\top)^\bot\ne\emptyset\}$. 
So, the above three quantities \eqref{eq:B-inf}, \eqref{eq:B-quotient} and \eqref{eq:B-constraint}  coincide. 
Using the $l^1$-type  orthogonal notation $\bot^1$, since  $\mathbf{x}\bot^1\mathrm{Image}(B_d^\top)$ means that  $\mathbf{u}\bot \mathrm{Image}(B_d^\top)$  for some  $  \mathbf{u}\in\nabla \|\mathbf{x}\|_{1,\deg}$, the constraint $\{\mathbf{x}\in \R^n:\nabla \|\mathbf{x}\|_{1,\deg}\bigcap\mathrm{Image}(B_d^\top)^\bot\ne\emptyset\}$ in  \eqref{eq:B-constraint} can be reduced to $\{\mathbf{x}\in \R^n:\mathbf{x}\bot^1\mathrm{Image}(B_d^\top)\}$ as shown in (D3).

Similar to the proof of Proposition 3.7 in \cite{Jost/Zhang21b}, 
we can apply Theorem 2.4 in  \cite{Jost/Zhang21c} 
to derive that every eigenvalue of the up 1-Laplacian eigenproblem \eqref{eq:1-lap}  has an eigenvector in the set of the extreme points associated with   the function pair  $(\|B_{d+1}^\top\cdot\|_1,\|\cdot\|_{1,\deg})$ since both $\|B_{d+1}^\top\cdot\|_1$ and $\|\cdot\|_{1,\deg}$ are piecewise linear. 
We shall now describe   these extreme points in more detail.

The unit $l^1$-sphere  $\{\mathbf{x}\in\R^n:\|\mathbf{x}\|_{1,\deg}=1\}$ can be represented as a union $P_1\cup\cdots\cup P_k$ of finitely many convex polytopes of dimension $(n-1)$ on which  both $\|B_{d+1}^\top\cdot\|_1$ and $\|\cdot\|_{1,\deg}$ are linear, and we let $k$ here be the smallest such integer. 
For $i=1,\dots ,k$, let  $\mathrm{Ext}(\|B_{d+1}^\top\cdot\|_1,\|\cdot\|_{1,\deg})$  be the vertex set  of $P_i$. 
Clearly, $\mathrm{Ext}(\|B_{d+1}^\top\cdot\|_1,\|\cdot\|_{1,\deg})$ is a finite set, and its elements are called the extreme points determined by the  function pair $(\|B_{d+1}^\top\cdot\|_1,\|\cdot\|_{1,\deg})$. 

Since all the entries of the  matrix $B_{d+1}^\top$ and the degrees are rational numbers,  by the theory of systems of linear equations, 
$\mathrm{Ext}(\|B_{d+1}^\top\cdot\|_1,\|\cdot\|_{1,\deg})\subset \mathbb{Q}^n$. 
Let $M$ be a sufficiently  large natural number that is greater than the least  common multiple of all the denominators of the  components of all points in $\mathrm{Ext}(\|B_{d+1}^\top\cdot\|_1,\|\cdot\|_{1,\deg})$. 
Then, 
$\mathrm{Ext}(\|B_{d+1}^\top\cdot\|_1,\|\cdot\|_{1,\deg})\subset \left\{t\mathbf{x}:t\ge0\text{ and }\mathbf{x}\in \{-M,\ldots,-1,0,1,\ldots,M\}^{n}\right\}$, and thus every eigenvalue has an eigenvector in $\left\{t\mathbf{x}:t\ge0\text{ and }\mathbf{x}\in \{-M,\ldots,-1,0,1,\ldots,M\}^{n}\right\}$.  
Since both $\|B_{d+1}^\top\cdot\|_1$ and $\|\cdot\|_{1,\deg}$ are positively one-homogeneous,  we further derive that  every eigenvalue has an eigenvector in the set $\{-M,\ldots,-1,0,1,\ldots,M\}^{n}$. 
Moreover, the minimizations   \eqref{eq:B-inf} and \eqref{eq:B-constraint} can reach their minima at some points in $\{-M,\ldots,-1,0,1,\ldots,M\}^{n}$, and the minimization problem \eqref{eq:B-quotient}  achieves its minima at some equivalence  class $[\mathbf{x}]$ for  some  $\mathbf{x}\in \{-M,\ldots,-1,0,1,\ldots,M\}^{n}$. 
That means, we can use $\{-M,\ldots,-1,0,1,\ldots,M\}^n$ instead of $\R^n$ in the constraints of these three minimization problems \eqref{eq:B-inf}, \eqref{eq:B-quotient} and \eqref{eq:B-constraint}. 
It follows from  $\{-M,\ldots,-1,0,1,\ldots,M\}^n\subset \mathbb{Z}^n\subset \R^n$ that one can also replace $\R^n$ by $\mathbb{Z}^n$ in  the constraints of these three minimization problems \eqref{eq:B-inf}, \eqref{eq:B-quotient} and \eqref{eq:B-constraint}. 

We now proceed to prove the equivalence of (D1)--(D4). 

By using $\mathbb{Z}^n$ instead of $\R^n$ in \eqref{eq:B-inf} and equivalently replacing $\mathbb{Z}^n$ by $C^d(\Sigma,\mathbb{Z})$, and $\mathrm{Image}(B_d^\top)$ by $\mathrm{Im\,}\delta$, we obtain that (D2) is a reformulation of \eqref{eq:B-inf}. 
Similarly, (D4) is a reformulation of  \eqref{eq:B-quotient}. And, if $\tilde{H}^{d}(\Sigma,\R)= 0$, then $\mathrm{image}(\delta_{d-1})=\mathrm{ker}(\delta_{d})$, which implies  $C^{d}(\Sigma)/\mathrm{image}(\delta_{d-1})=C^{d}(\Sigma)/\mathrm{ker}(\delta_{d})\cong \mathrm{image}(\delta_{d})$ and  $\delta_d^{-1}(\mathbf{y})=[\mathbf{x}]$ for any $\mathbf{y}\in \mathrm{image}(\delta_{d})$. 
Note that the filling norm  $$\|\mathbf{y}\|_{\mathrm{fil}}\coloneqq \inf_{x\in\delta_d^{-1} (\mathbf{y})}\|\mathbf{x}\|_{1,\deg}$$ coincides with $\|[\mathbf{x}]\|$, and $\|\mathbf{y}\|_1=\|\delta_d\mathbf{x}\|_1=\|B_{d+1}^\top\mathbf{x}\|_1$. Therefore, (D4) and \eqref{eq:B-quotient} coincide. 

By using  $\{-M,\ldots,-1,0,1,\ldots,M\}^n$ instead of $\R^n$ in \eqref{eq:B-inf}, we can similarly identify every generalized multiset  $S\subset_M\Sigma_d$ with a unique  $\mathbf{x}\in \{-M,\ldots,-1,0,1,\ldots,M\}^n$ by identifying  $x_\tau$ with $m(\tau)$ for any $\tau\in\Sigma_d$, where $m(\tau)$ is the generalized multiplicity of $\tau$ in $S$. 
For such $S$ and $\mathbf{x}$, we then have $\vol(S)=\|\mathbf{x}\|_{1,\deg}$ and $|\partial^*_{d+1} S|=\|B_{d+1}^\top\mathbf{x}\|_1$. 
If $\tilde{H}^{d}(\Sigma,\R)\ne 0$,  then $\mathrm{image}(B_{d}^\top)$ is a proper subset of $ \mathrm{ker}(B_{d+1}^\top)$, and thus \eqref{eq:B-inf} is zero. In this case, there exists  $S'\ne\emptyset$ such that  $\partial^*_{d+1}S'=\partial^*_{d+1}S=\emptyset$, which means that 
\eqref{eq:combinatorial-h(S_d)} also equals zero. 
If $\tilde{H}^{d}(\Sigma,\R)= 0$,  then $\mathrm{image}(B_{d}^\top)= \mathrm{ker}(B_{d+1}^\top)$,  and thus for such $S$ and $\mathbf{x}$ with $\mathbf{x}\not\in\mathrm{ker}(B_{d+1}^\top)$, we have\[\inf\limits_{\mathbf{z}\in \mathrm{image}(B_d^\top)}\|\mathbf{x}+\mathbf{z}\|_{1,\deg}=\inf\limits_{\mathbf{x}'\in\R^n:\mathbf{x}'-\mathbf{x}\in \mathrm{ker}(B_{d+1}^\top)}\|\mathbf{x}'\|_{1,\deg}=\min\limits_{S'\ne\emptyset:\partial^*_{d+1}S'=\partial^*_{d+1}S}\vol(S').\] 
Therefore, \eqref{eq:combinatorial-h(S_d)} and \eqref{eq:B-inf} actually represent  the same quantity which has been  denoted by $h(\Sigma_d)$. 
This proves the equivalence between (D1), (D2), (D3) and (D4).
\end{proof}


Hence, the four equivalent definitions in (D1)--(D4) represent the same Cheeger constant $h(\Sigma_d)$ from different viewpoints:

(D1) provides a  combinatorial formulation of the Cheeger constant $h(\Sigma_d)$ using the language of multi-sets  in combinatorics, which means that our Cheeger constant is actually a combinatorial quantity.  

(D2) presents  $h(\Sigma_d)$ as a $\mathbb{Z}$-expander, and makes it clear that 
$$h(\Sigma_d)=0 \Longleftrightarrow \tilde{H}^d(\Sigma,\R)\ne0,\;\; \forall d\ge 0.$$

As we have discussed, the Cheeger constant defined as a $\mathbb{Z}_2$-expander  violates the Cheeger inequality on simplicial complexes. However, as a  $\mathbb{Z}$-expander, it is possible to get a Cheeger inequality. 

(D3) shows that $h(\Sigma_d)$ coincides with the smallest nontrivial 1-Laplacian eigenvalue, which was already known in both the graph and the domain settings.

(D4) reveals the non-obvious fact that  $h(\Sigma_d)$ has a deep relation with Gromov's filling profile. This is an equivalent reformulation of \eqref{eq:combinatorial-h(S_d)} using the language of norms on cochain groups, which helps us to further  understand the formula \eqref{eq:combinatorial-h(S_d)}.  

In addition, for sufficiently  large numbers $M\in\mathbb{Z}_+$,
$$
h(\Sigma_d)\xlongequal[]{\text{if } \tilde{H}^{d}(\Sigma,\R)= 0}\min\limits_{\substack{S\subset_M \Sigma_d \\ \partial^*_{d+1}S\ne \emptyset}}\frac{|\partial^*_{d+1} S|}{\min\limits_{S':\partial^*_{d+1}S'=\partial^*_{d+1}S}\vol(S')}>0.   
$$

For the case of $d=0$, we can take $M=1$, and then $h(\Sigma_0)$ reduces to the usual Cheeger constant on graphs. 
 The following result shows that the constant $h(\Sigma_d)$ satisfies Cheeger-type inequalities, and therefore provides a  solution to the problem formulated in the introduction.

\begin{pro}\label{pro:rough-Cheeger}
Suppose that $\deg \tau>0$, $\forall \tau\in \Sigma_d$. Then, $$\frac{h^2(\Sigma_d)}{|\Sigma_{d+1}|}\le \lambda_{I_d}(\Delta_d^{up})\le \vol(\Sigma_d)h(\Sigma_d).$$
\end{pro}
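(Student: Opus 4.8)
The plan is to prove the two inequalities separately, using the 1-Laplacian characterization (D3) of $h(\Sigma_d)$ as the bridge between the combinatorial constant and the $L^2$-eigenvalue $\lambda_{I_d}(\Delta_d^{up})$. Recall that by Theorem 2.1 in \cite{Jost/Zhang21c} and the equivalence of (D1)--(D4) just established,
$$h(\Sigma_d)=\lambda_{I_d}(\Delta^{up}_{d,1})=\inf_{\mathbf{x}\notin\mathrm{Image}(B_d^\top)}\frac{\|B_{d+1}^\top\mathbf{x}\|_1}{\inf_{\mathbf{z}\in\mathrm{Image}(B_d^\top)}\|\mathbf{x}+\mathbf{z}\|_{1,\deg}},$$
while $\lambda_{I_d}(\Delta_d^{up})$ is given by the analogous $L^2$-Rayleigh quotient $\|B_{d+1}^\top\mathbf{x}\|_2^2/\|\mathbf{x}\|_{2,\deg}^2$ minimized over $\mathbf{x}$ orthogonal (in the usual $l^2$-sense) to $\mathrm{Image}(B_d^\top)$. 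So both constants are "Cheeger/eigenvalue" quantities for the same pair of maps, one with $p=1$ and one with $p=2$, and the game is to compare $p=1$ and $p=2$ Rayleigh quotients by standard $l^1$-vs-$l^2$ norm estimates on a space of dimension $|\Sigma_{d+1}|$ (the target of $B_{d+1}^\top$) and $n=|\Sigma_d|$ (the source).

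\textbf{Upper (Buser) side.} I would start from an optimal $\mathbf{x}$ for $\lambda_{I_d}(\Delta_d^{up})$, i.e.\ $\mathbf{x}\perp_{2}\mathrm{Image}(B_d^\top)$ with $\|B_{d+1}^\top\mathbf{x}\|_2^2=\lambda_{I_d}(\Delta_d^{up})\,\|\mathbf{x}\|_{2,\deg}^2$. Feeding this same $\mathbf{x}$ into the variational characterization of $h(\Sigma_d)$ (dropping down to the $l^1$-quotient, which only decreases by passing to the infimum over $\mathbf z$), one gets $h(\Sigma_d)\le \|B_{d+1}^\top\mathbf{x}\|_1/\|[\mathbf{x}]\|_{1}$; then bound $\|B_{d+1}^\top\mathbf{x}\|_1\le \|B_{d+1}^\top\mathbf{x}\|_2\cdot\sqrt{|\Sigma_{d+1}|}$ (Cauchy--Schwarz in $\R^{|\Sigma_{d+1}|}$) and $\|[\mathbf{x}]\|_1\ge$ something comparable to $\|\mathbf{x}\|_{1,\deg}\ge\|\mathbf{x}\|_{2,\deg}$, up to the $\vol(\Sigma_d)$ factor; combined with the eigenvalue identity this yields $h(\Sigma_d)\le\sqrt{\vol(\Sigma_d)}\sqrt{\lambda_{I_d}(\Delta_d^{up})}$-type bound, i.e.\ $\lambda_{I_d}(\Delta_d^{up})\ge h^2(\Sigma_d)/\vol(\Sigma_d)$ — wait, that is actually the Cheeger side; so for the Buser side $\lambda_{I_d}(\Delta_d^{up})\le\vol(\Sigma_d)\,h(\Sigma_d)$ I instead take an optimal \emph{integer} multiset $\mathbf{x}$ realizing $h(\Sigma_d)$ as in (D1), use it as a test vector in the $l^2$-Rayleigh quotient, and estimate $\|B_{d+1}^\top\mathbf{x}\|_2^2\le\|B_{d+1}^\top\mathbf{x}\|_1^2$ against $\|\mathbf{x}\|_{2,\deg}^2\ge\|\mathbf{x}\|_{1,\deg}^2/\vol(\Sigma_d)$ — the coarse bound $\|\mathbf v\|_2^2\le\|\mathbf v\|_1^2$ and $\|\mathbf v\|_1^2\le(\text{number of nonzero entries})\cdot\|\mathbf v\|_2^2$ are exactly where the size-dependent constants $|\Sigma_{d+1}|$ and $\vol(\Sigma_d)$ enter. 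One must also check that the chosen test vector is genuinely orthogonal to $\mathrm{Image}(B_d^\top)$ in the $l^2$-sense, or replace it by its $l^2$-orthogonal projection off that subspace without increasing the numerator (the numerator only sees $B_{d+1}^\top\mathbf x$ and $B_{d+1}^\top B_d^\top=0$, so projecting does not change it) and without decreasing the denominator by more than a controlled amount.

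\textbf{Cheeger side.} For $h^2(\Sigma_d)/|\Sigma_{d+1}|\le\lambda_{I_d}(\Delta_d^{up})$, take an optimal $l^2$-eigenvector $\mathbf{x}\perp_2\mathrm{Image}(B_d^\top)$ and push it through the $h(\Sigma_d)$ variational formula: $h(\Sigma_d)\le\|B_{d+1}^\top\mathbf{x}\|_1/\|[\mathbf{x}]\|_{1,\deg}$. Now $\|B_{d+1}^\top\mathbf{x}\|_1\le\sqrt{|\Sigma_{d+1}|}\,\|B_{d+1}^\top\mathbf{x}\|_2$, and $\|[\mathbf{x}]\|_{1,\deg}=\inf_{\mathbf z}\|\mathbf{x}+\mathbf z\|_{1,\deg}\ge\inf_{\mathbf z}\|\mathbf{x}+\mathbf z\|_{2,\deg}$; the latter infimum over $\mathbf z\in\mathrm{Image}(B_d^\top)$ of the $l^2(\deg)$-norm is attained exactly at $\mathbf z=0$ since $\mathbf{x}$ is already $l^2(\deg)$-orthogonal to that subspace, so $\|[\mathbf{x}]\|_{1,\deg}\ge\|\mathbf{x}\|_{2,\deg}$. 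Hence $h(\Sigma_d)\le\sqrt{|\Sigma_{d+1}|}\,\|B_{d+1}^\top\mathbf{x}\|_2/\|\mathbf{x}\|_{2,\deg}=\sqrt{|\Sigma_{d+1}|}\sqrt{\lambda_{I_d}(\Delta_d^{up})}$, which rearranges to the claim.

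\textbf{Main obstacle.} The routine part is the $l^1$-vs-$l^2$ juggling; the genuinely delicate point is matching the \emph{quotient} norms / orthogonality conditions on the two sides. The constant $h(\Sigma_d)$ is defined with a quotient by $\mathrm{Image}(B_d^\top)$ in the $l^1(\deg)$-metric, whereas $\lambda_{I_d}(\Delta_d^{up})$ uses $l^2(\deg)$-orthogonality; one has to make sure that an $l^2$-orthogonal vector, when plugged into the $l^1$-quotient, still has its $l^1(\deg)$-quotient norm controlled below by its $l^2(\deg)$-norm (this uses that $l^2$-orthogonality to a subspace makes $0$ the minimizer of $\|\mathbf x+\mathbf z\|_{2,\deg}$, then $\|\cdot\|_1\ge\|\cdot\|_2$ after the inf), and conversely that a test multiset for $h(\Sigma_d)$ can be adjusted to be $l^2$-orthogonal to $\mathrm{Image}(B_d^\top)$ without hurting the numerator (immediate since $B_{d+1}^\top$ kills that subspace) and with only a $\vol(\Sigma_d)$-loss in the denominator. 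Keeping track of these normalizations — and confirming that $I_d=\mathrm{rank}(B_d)+1$ is consistently the index at which the "first nontrivial" eigenvalue sits for both the $1$- and $2$-Laplacians — is where the care is needed; everything else is Cauchy--Schwarz.
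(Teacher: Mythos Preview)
Your approach is correct and reaches the same conclusion, but it is genuinely different from the paper's. The paper does \emph{not} do any test-vector swapping or orthogonality bookkeeping. Instead it observes that both
\[
\lambda_{k}(\Delta^{up}_{d,1})=\inf_{\gamma(S)\ge k}\sup_{\mathbf{x}\in S}\frac{\|B_{d+1}^\top\mathbf{x}\|_1}{\|\mathbf{x}\|_{1,\deg}}
\quad\text{and}\quad
\lambda_{k}(\Delta^{up}_{d})=\inf_{\gamma(S)\ge k}\sup_{\mathbf{x}\in S}\frac{\|B_{d+1}^\top\mathbf{x}\|_2^2}{\|\mathbf{x}\|_{2,\deg}^2}
\]
are Krasnoselskii-genus min--max values taken over the \emph{same} family of centrally symmetric sets $S$. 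The pointwise norm inequalities
\[
1\le \frac{\|B_{d+1}^\top\mathbf{x}\|_1^2}{\|B_{d+1}^\top\mathbf{x}\|_2^2}\le |\Sigma_{d+1}|,\qquad
\min_\tau\deg\tau\le \frac{\|\mathbf{x}\|_{1,\deg}^2}{\|\mathbf{x}\|_{2,\deg}^2}\le \vol(\Sigma_d)
\]
then transfer directly through the inf--sup to give
$\frac{1}{\vol(\Sigma_d)}\lambda_k(\Delta^{up}_d)\le \lambda_k(\Delta^{up}_{d,1})^2\le \frac{|\Sigma_{d+1}|}{\min_\tau\deg\tau}\lambda_k(\Delta^{up}_d)$
for every $k$, and one simply takes $k=I_d$ together with $h\le 1$ and $\deg\tau\ge 1$. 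No projections, no matching of $l^1$- versus $l^2$-orthogonality are needed at all, and as a bonus one gets the inequality for every index $k$, not just $I_d$.

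Your route works too, but the step you flag as the ``main obstacle'' really does require one more sentence: on the Buser side, after projecting your optimal $l^1$-multiset $\mathbf{x}$ to $\mathbf{x}'\perp_2\mathrm{Image}(B_d^\top)$, the bound $\|\mathbf{x}'\|_{2,\deg}^2\ge\|\mathbf{x}\|_{1,\deg}^2/\vol(\Sigma_d)$ holds precisely because $\|\mathbf{x}'\|_{2,\deg}=\inf_{\mathbf z}\|\mathbf{x}+\mathbf z\|_{2,\deg}\ge \vol(\Sigma_d)^{-1/2}\inf_{\mathbf z}\|\mathbf{x}+\mathbf z\|_{1,\deg}$ and the last infimum equals $\|\mathbf{x}\|_{1,\deg}$ by the $l^1(\deg)$-minimality of $\mathbf{x}$ in its coset (this is what choosing $\mathbf{x}$ ``optimal for (D1)/(D3)'' buys you). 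With that made explicit, your argument is complete; what the paper's genus route buys is that this bookkeeping simply never arises.
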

\begin{proof}
For simplicity, we let  $h=h(\Sigma_d)$ and $\lambda=\lambda_{I_d}(\Delta_d^{up})$. 
Note that  $\lambda$ and $h$ are the $I_d$-th min-max eigenvalues of the $d$-th normalized up Laplacian and  the $d$-th up 1-Laplacian, respectively. 
By the following  elementary inequalities
$$\min\limits_\tau\deg \tau  \le \frac{\|\mathbf{x}\|_{1,\deg}^2}{\|\mathbf{x}\|_{2,\deg}^2}\le \sum_{\tau\in \Sigma_d}\deg \tau  \; \text{ 
and }\; 1\le \frac{\|B_{d+1}^\top\mathbf{x}\|_1^2}{\|B_{d+1}^\top\mathbf{x}\|_2^2}\le \#\Sigma_{d+1},$$
we have \begin{equation}\label{eq:inequal-for-min/max}
\frac{1}{\sum_{\tau\in \Sigma_d}\deg \tau}\frac{\|B_{d+1}^\top\mathbf{x}\|_2^2}{\|\mathbf{x}\|_{2,\deg}^2} \le\frac{\|B_{d+1}^\top\mathbf{x}\|_1^2}{\|\mathbf{x}\|_{1,\deg}^2}\le   \frac{\#\Sigma_{d+1}}{\min\limits_\tau\deg \tau}\frac{\|B_{d+1}^\top\mathbf{x}\|_2^2}{\|\mathbf{x}\|_{2,\deg}^2}.
\end{equation}

Recalling the min-max eigenvalues \eqref{plap1} and \eqref{plap1+}, since the $k$-th min-max eigenvalue of the $d$-th up $1$-Laplacian $\Delta^{up}_{d,1}$ is 
$$\lambda_{k}(\Delta^{up}_{d,1})=\inf_{\gamma(S)\ge k}\sup\limits_{f\in S\setminus \{\mathbf{0}\}}\frac{\|B_{d+1}^\top\mathbf{x}\|_1}{\|\mathbf{x}\|_{1,\deg}},$$
while the $k$-th smallest eigenvalue of  $d$-th normalized up Laplacian is
$$\lambda_{k}(\Delta^{up}_d)=\inf_{\gamma(S)\ge k}\sup\limits_{f\in S\setminus \{\mathbf{0}\}}\frac{\|B_{d+1}^\top\mathbf{x}\|_2^2}{\|\mathbf{x}\|_{2,\deg}^2},$$
we can apply \eqref{eq:inequal-for-min/max} to derive
\begin{equation}\label{eq:m-lambda-k-up}
\frac{1}{\sum\limits_{\tau\in \Sigma_d}\deg\tau}\lambda_{k}(\Delta^{up}_{d})\le \left(\lambda_{k}(\Delta^{up}_{d,1})\right)^2 \le \frac{\#\Sigma_{d+1}}{\min\limits_\tau\deg\tau}\lambda_{k}(\Delta^{up}_{d}). 
\end{equation} 
This is a general inequality relating $\lambda_{k}(\Delta^{up}_{d,1})$ and $\lambda_{k}(\Delta^{up}_d)$. By taking $k=I_d$ in \eqref{eq:m-lambda-k-up}, and by observing that $h=h(\Sigma_d)=\lambda_{I_d}(\Delta^{up}_{d,1})$, we finally obtain
\begin{align*}
    \frac{1}{\vol(\Sigma_d)}\lambda=\frac{1}{\sum\limits_{\tau\in \Sigma_d}\deg\tau}\lambda\le h^2 \le \frac{\#\Sigma_{d+1}}{\min\limits_\tau\deg\tau}\lambda.
    \end{align*}
The proof 
is then  completed by noting that $h\le 1\le \deg \tau$,  $\forall \tau\in \Sigma_d$.  
\end{proof}

\begin{remark}\label{remark:down-Cheeger}
 We can also define the down Cheeger constant {(for $d\ge1$)}
$$h_{down}(\Sigma_d)\coloneqq \min\limits_{x\bot^1 \mathrm{Image}(B_{d+1})}\frac{\|B_{d}\mathbf{x}\|_1}{\|\mathbf{x}\|_{1,\deg}}=\lambda_{I_{d+1}}(\Delta^{down}_{d,1})$$
which  possesses a combinatorial  reformulation that is similar to  \eqref{eq:combinatorial-h(S_d)},  where   $I_{d+1}:=\dim \mathrm{Image}(B_{d+1})+1=\mathrm{rank}(B_{d+1})+1$. 

Consider a $d$-dimensional  combinatorial manifold $\Sigma$, that is, a $d$-dimensional topological manifold that has a triangulation as a simplicial complex. 
As a manifold, we assume that $\Sigma$ is connected and  has no  boundary. Then, $B_{d+1}$ is a $|\Sigma_d|\times 1$ matrix of rank $1$, and  $I_{d+1}=\dim\mathrm{Image}(B_{d+1})+1=\mathrm{rank}(B_{d+1})+1=1+1=2$. Therefore, in particular, $\lambda_{I_{d+1}}=\lambda_2$. Moreover,   the down adjacency relation  induces a graph on $\Sigma_d$, and we can infer the following Cheeger inequality: 
$$\frac{h^2_{down}(\Sigma_d)}{2}\le \lambda_{2}(\Delta_d^{down}) 
\le 2h_{down}(\Sigma_d).$$
In fact, Theorem 2.7 in \cite{Steenbergen14}   closely resembles the above
inequality, and the assumption made there for the lower bound that every $(d-1)$-dimensional
simplex is incident to at most two $d$-simplices is satisfied for a
combinatorial manifold. 
\end{remark}

In the sequel, $M$ will be used to denote a manifold. 

\begin{defn}
Let $M$ be a $d$-dimensional orientable compact closed Riemannian manifold, and let $c>1$. A triangulation $T$  of $M$ is \emph{$c$-uniform} if 
for any two $d$-simplexes $\triangle$ and $\triangle'$ in the triangulation $T$, $$\frac1c <\frac{\mathrm{diam}(\triangle)}{\mathrm{diam}(\triangle')}<c\;\;\text{ and }\;\;\frac1c <\frac{\mathrm{diam}(\triangle)}{\mathrm{vol}(\triangle)^{\frac1d}}<c
.$$

A set $\mathcal{T}$ of triangulations   of $M$ is \emph{uniform} if there exist $N>1$ and  $c>1$  such that for each triangulation in $\mathcal{T}$, either its  number of vertices  is smaller than $N$, or it  is $c$-uniform. In this case,  these triangulations are simply said to be \emph{uniform}, and the constants $N$ and $c$ are called the \emph{uniform parameters} of these triangulations. 
\end{defn}
\begin{theorem}\label{thm:Cheeger-manifold-complex}
Let $M$ be an  orientable, compact, closed Riemannian manifold of dimension $(d+1)$. 
There is a constant $C$ such that for all simplicial complexes $\Sigma$ that are combinatorially equivalent to some prescribed uniform triangulations of $M$, 
\begin{equation}\label{eq:upL-Chee}
\frac{h^2(\Sigma_d)}{C}\le \lambda_{I_{d}}(\Delta_{d}^{up}) \le C\cdot h(\Sigma_d).     
\end{equation} 
In addition, $h(\Sigma_d)>0$ if and only if $H_1(\Sigma)=0$ (or equivalently, $H_1(M)=0$).
\end{theorem}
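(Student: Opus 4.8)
The strategy is to reduce \eqref{eq:upL-Chee} to the classical graph Cheeger inequality applied to the \emph{dual graph} of the triangulation, and then to absorb the gap between that graph's Cheeger constant and $h(\Sigma_d)$ by a graph-to-manifold comparison; the genuine work lies entirely in the last step.

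First, since combinatorial equivalence preserves both $h(\Sigma_d)$ (use (D1)) and the normalized operator $\Delta_d^{up}$, I may assume $\Sigma$ is itself one of the prescribed uniform triangulations of $M$. Triangulations coarser than any fixed mesh threshold form finitely many combinatorial types and affect only the size of $C$, so I may take $\Sigma$ to be $c$-uniform with mesh $\epsilon$ as small as needed. Since $\Sigma\cong M$ and $M$ is orientable, Poincar\'e duality over $\R$ gives $\tilde H^d(\Sigma,\R)\cong H_1(M;\R)$ for $d\ge 1$, which with (D2) yields the ``in addition'' statement; moreover the number of zero eigenvalues of $\Delta_d^{up}$ in excess of $\dim\image\delta_{d-1}=I_d-1$ equals $b_d:=\dim H^d(\Sigma,\R)$, so if $H_1(M)\ne 0$ then $b_d>0$, both $h(\Sigma_d)$ and $\lambda_{I_d}(\Delta_d^{up})$ vanish, and \eqref{eq:upL-Chee} is trivial. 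Hence I assume henceforth that $H_1(M)=0$, so that $\lambda_{I_d}(\Delta_d^{up})$ is the first non-zero eigenvalue of $\Delta_d^{up}$.

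Next I pass to the dual graph $G^\ast$: its vertices are the top simplices $\Sigma_{d+1}$, and two are joined by an edge for each shared $d$-face. As $M$ is a closed manifold, every $\tau\in\Sigma_d$ lies in exactly two top simplices, so $G^\ast$ is a connected $(d+2)$-regular graph. All degrees being constant ($\deg\tau\equiv 2$, $\deg\sigma\equiv d+2$), the normalized operators satisfy $\Delta_d^{up}=\tfrac12 L_d^{up}$ and $\Delta_{d+1}^{down}=\tfrac1{d+2}L_{d+1}^{down}$, so the isospectrality \eqref{com2} gives that the first non-zero eigenvalue of $\Delta_d^{up}$ equals $\tfrac{d+2}2$ times that of $\Delta_{d+1}^{down}$; the latter is the normalized Laplacian of a signed structure on $G^\ast$ which, because $M$ is orientable, is \emph{balanced} (a coherent orientation of the top simplices makes all signs $+1$ after switching), hence by Lemma \ref{ss} shares its spectrum with the ordinary normalized Laplacian $\Delta_{G^\ast}$ (compare the top-dimensional case of Remark \ref{remark:down-Cheeger}). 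The classical graph Cheeger inequality \eqref{che17c} then gives
\[
\tfrac{d+2}4\, h(G^\ast)^2\ \le\ \lambda_{I_d}(\Delta_d^{up})\ \le\ (d+2)\, h(G^\ast),
\]
where $h(G^\ast)$ is the Polya--Cheeger constant \eqref{che17b} of $G^\ast$. In parallel, reading (D4) through Poincar\'e--Lefschetz duality --- $C^d(\Sigma)$ is the group of $1$-chains of the dual cell complex, $\delta_d$ its boundary operator, $\image\delta_{d-1}$ the $1$-boundaries, and the norms agree up to the constant weight $\deg\tau\equiv 2$ --- shows (using $H_1(M)=0$) that the minimizing cochain in (D4) is a shortest edge-path between two diametral vertices of $G^\ast$, so that
\[
h(\Sigma_d)\ =\ \frac1{\mathrm{diam}(G^\ast)};
\]
equivalently, $h(\Sigma_d)=\lambda_{I_d}(\Delta^{up}_{d,1})$ is the reciprocal of Gromov's filling profile of $\delta_d$, which on a triangulated manifold is the weighted diameter of the dual skeleton.

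It therefore remains to prove $h(G^\ast)\asymp 1/\mathrm{diam}(G^\ast)$ with constants depending only on $M$ and the uniform parameters; this is the crux, and the place where the graph-to-manifold theory on Cheeger cuts \cite{TrillosSlepcev-15,TrillosSlepcev-16,TMT20,TSBLB-16} enters, now for simplicial meshes rather than point clouds. Here $c$-uniformity forces every top simplex to have diameter, inradius and Riemannian volume comparable to $\epsilon$, $\epsilon$, $\epsilon^{d+1}$; comparing combinatorial paths with curves in $M$ and geodesics with chains of adjacent simplices gives $\mathrm{diam}(G^\ast)\asymp \mathrm{diam}(M)/\epsilon$. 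For the lower bound, a vertex cut $S\subset\Sigma_{d+1}$ determines the region $\Omega_S=\bigcup_{\sigma\in S}\sigma$ with $\mathrm{Vol}(\Omega_S)\asymp\epsilon^{d+1}\vol(S)$ and $\mathrm{Area}(\partial\Omega_S)\asymp\epsilon^{d}|\partial S|$, so the isoperimetric inequality of $M$ (i.e.\ positivity of the Riemannian Cheeger constant $h_{\mathrm{Cheeger}}(M)$ in the sense of \cite{Cheeger70}) forces $\eta(S)\gtrsim \epsilon\, h_{\mathrm{Cheeger}}(M)$, whence $h(G^\ast)\gtrsim \epsilon\, h_{\mathrm{Cheeger}}(M)$. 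For the upper bound, sweeping geodesic balls centred at a diametral vertex produces a balanced cut whose separating $d$-faces all lie in a thin tube around a single geodesic sphere of $M$, whose area is bounded in terms of $M$ alone, so $h(G^\ast)\le\eta(S)\lesssim\epsilon$. Combining, $h(G^\ast)\cdot\mathrm{diam}(G^\ast)\asymp h_{\mathrm{Cheeger}}(M)\cdot\mathrm{diam}(M)$, a fixed positive finite constant of $M$; hence $h(G^\ast)\asymp h(\Sigma_d)$, and substituting into the displayed bounds for $\lambda_{I_d}(\Delta_d^{up})$ yields \eqref{eq:upL-Chee}. I expect the main obstacle to be precisely this last step: the tube and volume estimates and the discretization of cuts must be made rigorous and, crucially, \emph{uniform} over the whole family of prescribed triangulations, whereas the spectral reductions of the two previous paragraphs are routine once the normalizations and the duality are in place.
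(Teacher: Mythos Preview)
Your proof is correct and its architecture matches the paper's exactly: both reduce to $H_1(M)=0$ and $c$-uniform mesh, pass via \eqref{com2} and the constant degrees to the classical Cheeger inequality on the dual graph $G^\ast$ (the paper's Claim~1 and Remark~\ref{remark:down-Cheeger}), identify $h(\Sigma_d)$ with $1/\mathrm{diam}(G^\ast)$, and then close the loop with a graph-to-manifold comparison through $h(M)$ (the paper's Claim~2).

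The one substantive difference is how $h(\Sigma_d)=1/\mathrm{diam}(G^\ast)$ is obtained. The paper invokes the nonlinear spectral duality of \cite{Jost/Zhang21c,TZ22+} (Lemmas~\ref{Lemma_duality}--\ref{Lemma_duality2}) to turn the $1$-Laplacian quantity (D3) into the $\infty$-Laplacian eigenvalue on $G^\ast$, which is known to equal $1/\mathrm{diam}(G^\ast)$. You instead read (D4) through Poincar\'e--Lefschetz duality so that $h(\Sigma_d)^{-1}$ becomes the maximal ratio of filling norm to $\ell^1$-norm for $0$-boundaries on $G^\ast$; to make your assertion ``the minimizer is a shortest path between diametral vertices'' rigorous you need the standard optimal-transport decomposition $y=\sum_j t_j(e_{\alpha_j}-e_{\beta_j})$ with $\sum_j t_j=\tfrac12\|y\|_1$, which yields $\|y\|_{\mathrm{fil}}\le\tfrac12\mathrm{diam}(G^\ast)\|y\|_1$ and hence the matching lower bound. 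Your route is more elementary and self-contained here, while the paper's duality lemma is what later powers the $p$-Laplacian generalization (Theorem~\ref{thm:p-lap-Cheeger}). You also correctly flag that the uniform graph-to-manifold comparison is where the genuine analytic work lies; the paper treats this exactly as you anticipate, citing \cite{TSBLB-16,TrillosSlepcev-16,TrillosSlepcev-15,TMT20}.
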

\begin{proof}
By Proposition \ref{pro:rough-Cheeger}, $\lambda_{I_{d}}(\Delta_{d}^{up})=0$ if and only if  $h(\Sigma_d)=0$. So, it suffices to assume that $h(\Sigma_d)>0$, i.e., $\tilde{H}^d(M)=\tilde{H}^d(\Sigma)=0$. Since $M$ and $\Sigma$ are of  dimension $(d+1)$, Poincar\'e duality implies that  $\tilde{H}_1(M)=\tilde{H}^d(M)=0$.  

Since there are only finitely many   simplicial complexes with less than a given number of vertices, the existence of the constant $C>0$ in \eqref{eq:upL-Chee} for these simplicial complexes  follows immediately from Proposition \ref{pro:rough-Cheeger}. 

In the sequel, we may assume  without loss of generality that $M$ is  simply connected, and the  triangulation is $c$-uniform for some $c>1$, and $\Sigma_d$ has $n$ elements, where $n$ is a sufficiently large integer. 

For any $\epsilon>0$, there exists $N>0$ such that any $c$-uniform triangulation with at least $N$ maximal faces satisfies   $\frac{1}{2c}\epsilon'<\mathrm{diam}(\triangle)<\epsilon'$  for any maximal face $ \triangle$ in the triangulation, where $\epsilon'$ is some small number in $(0,\epsilon)$. 
We shall also regard the uniform triangulation as a uniform  $\epsilon$-net.

With slight abuse of notation, later on, we will use $\epsilon:=\max_{\triangle\in T}\mathrm{diam}(\triangle)$ to indicate the maximum diameter of the maximal faces in the  triangulation. In other words, we fix a  small  $\epsilon>0$ and a sufficiently large $N>0$,  we consider any  simplicial complex $\Sigma$ which is combinatorially equivalent to a $c$-uniform triangulation with at least $N$ maximal faces, and we let  $\epsilon\coloneqq\max_{\sigma\in \Sigma_{d+1}}\mathrm{diam}(|\sigma|).$

\begin{enumerate}
\item[Claim 1] For the down Cheeger constant $h_{down}(\Sigma_{d+1})$, we have
$$ \frac{d+2}{4}h^2_{down}(\Sigma_{d+1})\le \lambda_{I_d}(\Delta_d^{up}) \le (d+2)h_{down}(\Sigma_{d+1}). $$

Proof: This is derived by the Cheeger inequality
$$ \frac{h^2_{down}(\Sigma_{d+1})}{2}\le \lambda_{2}(\Delta_{d+1}^{down}) \le 2h_{down}(\Sigma_{d+1}) $$
proposed in Remark \ref{remark:down-Cheeger}, 
and the equality 
\begin{equation}\label{eq:(un)normal-up/down}\lambda_{I_d}(\Delta_d^{up})=\frac12\lambda_{I_d}(L_d^{up})= \frac12\lambda_{2}(L_{d+1}^{down})=\frac{d+2}{2}\lambda_{2}(\Delta_{d+1}^{down}).\end{equation} 
Here,  $\lambda_{I_d}(L_d^{up})$ and $  \lambda_{2}(L_{d+1}^{down})$  denote the smallest non-zero eigenvalues of the unnormalized up-Laplacian $L_d^{up}$ and the unnormalized down-Laplacian $L_{d+1}^{down}$, respectively.  By \eqref{com2}, we infer the second equality in \eqref{eq:(un)normal-up/down}. Moreover, the first and the last equality in \eqref{eq:(un)normal-up/down} are based on the fact that the degrees appearing in the expression of the $d$-th normalized up-Laplacian $\Delta_d^{up}$ are all equal to 2, while the degrees  in the expression of the $(d+1)$-th normalized down-Laplacian $\Delta_{d+1}^{down}$ are all equal to $d+2$.  In fact, since $M$ is a compact manifold without boundary,  each $d$-face is contained in exactly two  $(d+1)$-faces, and every $(d+1)$-face contains exactly $(d+2)$ different  $d$-faces. 
\item[Claim 2] The Cheeger constant $h(\Sigma_d)$ and the down Cheeger constant  $h_{down}(\Sigma_{d+1})$ satisfy $h(\Sigma_d)\sim  h_{down}(\Sigma_{d+1})$,  
i.e., there exists a uniform constant  $C>1$ such that 
\[\frac1C h_{down}(\Sigma_{d+1})\le h(\Sigma_d)\le C\,  h_{down}(\Sigma_{d+1}).\]

The proof is further divided into the following two claims.
\begin{enumerate}
\item[Claim 2.1] $\frac1\epsilon h_{down}(\Sigma_{d+1})\sim  h(M)$

Proof: Let $G$ be the graph with $n\coloneqq \#\Sigma_{d+1}$ vertices located in the barycenters of all $(d+1)$-simplexes, such that two vertices form an edge in $G$ if and only if these two $d$-simplexes are down adjacent. We may call $G$ the underlying graph of the  triangulation. 

Note that $ h_{down}(\Sigma_{d+1})$ also indicates the Cheeger constant of the  unweighted underlying graph $G$. 
An approximation  approach developed in \cite{TMT20,TrillosSlepcev-16} implies  that the Cheeger constant of a   uniform triangulation should approximate the
Cheeger constant of the manifold when we equip the edges of the underlying graph of the triangulation with  appropriate  weights (related to $\epsilon$).   In fact, since $G$ is  the underlying graph of the triangulation, we may assume that   $G$ is embedded in the manifold $M$. 
Then, according to the approximation theorems in \cite{TMT20,TrillosSlepcev-16}, by adding  appropriate  weights (related to $\epsilon$)\footnote{The weight of an edge $\{u,v\}$ is determined by the distance of $u$ and $v$ in $M$, which is about $O(\epsilon)$.} on $G$, the Cheeger constant of $G$ (with appropriate  edge weights) would approximate $h(M)$ (i.e., the difference of $h(M)$ and the Cheeger constant of the weighted graph $G$ is bounded by  $h(M)/2$ whenever $\epsilon$ is sufficiently small). 
We can then adopt the same approximation  approach as in
\cite{TMT20,TrillosSlepcev-16} (more precisely, a slight modification of the  approximation theorem 
in \cite{TMT20,TrillosSlepcev-16,TrillosSlepcev-15}) 
 to derive that $ \frac1\epsilon h_{down}(\Sigma_{d+1})\sim  h(M)$.

\item[Claim 2.2] $\frac1\epsilon h(\Sigma_d)\sim  h(M)$ whenever $H_1(M)=0$.

Proof: It is well-known that  $H_1(M)=0$ if and only if $H^d(M)=0$ if and only if $\mathrm{Ker}(\delta_d)=\mathrm{Im}(\delta_{d-1})$,  as $M$ is a compact closed manifold of dimension $(d+1)$. Thus,  
by applying \eqref{eq:B-inf}, we have  $$h(\Sigma_d)= \min\limits_{x\not\in  \mathrm{Ker}(\delta_{d})}\frac{\sum\limits_{\sigma\in \Sigma_{d+1}}\left|\sum\limits_{\tau\in \Sigma_d}\mathrm{sgn}([\tau],\partial[\sigma])x_\tau\right|}{\min\limits_{z\in \mathrm{Ker}(\delta_d)}\sum\limits_{\tau\in \Sigma_d}2|x_\tau+z_\tau|}.$$
By the duality theorems established in \cite{Jost/Zhang21c,TZ22+} (see Section \ref{sub:duality} for an explanation), 
we  further obtain
$$h(\Sigma_d)=\min\limits_{y\text{ non-constant}}\frac{\max\limits_{\sigma \mathop{\sim}\limits^{\text{down}} \sigma'}\frac12|y_\sigma-y_{\sigma'}|}{\min\limits_{t\in\R}\max\limits_{\sigma\in \Sigma_{d+1}}|y_\sigma+t|}$$
where $\sigma \mathop{\sim}\limits^{\text{down}} \sigma'$ means $\sigma$ and $\sigma'$ are down adjacent, i.e., they share a common facet. 
And then by elementary techniques, there is no difficulty to check that the  optimization in the right hand side coincides with  
$$\min\limits_{\min\limits_{\sigma} y_\sigma+\max\limits_\sigma y_\sigma=0}\frac{\max\limits_{\sigma \mathop{\sim}\limits^{\text{down}} \sigma'}|y_\sigma-y_{\sigma'}|}{2\max\limits_{\sigma}|y_\sigma|}=\frac{1}{\mathrm{diam}(G)}$$
where $\mathrm{diam}(G)$ indicates the 
diameter of $G$. We remark here that we indeed rewrite   $h(\Sigma_d)$ as the smallest non-trivial eigenvalue of the $\infty$-Laplacian, which agrees with  $1/\mathrm{diam}(G)$.  This argument is similar to a result  in  \cite{Juutinen99}.

Finally, since the  triangulation is $c$-uniform, we obtain 
$$\frac1\epsilon h(\Sigma_d)= \frac{1}{\epsilon\cdot \mathrm{diam}(G)}\sim \frac{1}{ \mathrm{diam}(M)} .$$
Hence,  $\frac1\epsilon h(\Sigma_d)\sim h(M)$.
\end{enumerate}
\end{enumerate}
The proof is then  completed by combining all the statements above.
\end{proof}
\begin{remark}
We conclude this section with some observations regarding the latest theorem. 
\begin{itemize}
\item The constant $C$ in  Theorem \ref{thm:Cheeger-manifold-complex} depends
  on the uniform parameters of the triangulations, and the ambient
  manifold. We hope that it is possible to find a new approach to get a uniform
  constant that only depends on the dimension $d$.
\item Under the same condition as in  Theorem \ref{thm:Cheeger-manifold-complex},
  we further have $$\frac{\lambda_{k_{d}}(\Delta_{d,1}^{up})^2}{C}\le
  \lambda_{k_{d}}(\Delta_{d}^{up}) \le
  C\lambda_{k_{d}}(\Delta_{d,1}^{up}),$$ where $k_d:=\dim \mathrm{Ker}(B_{d+1}^\top)+1$. This inequality  coincides with the
  Cheeger inequality in Theorem \ref{thm:Cheeger-manifold-complex} if and only
  if $H_1(M)=0$.

\item A modification of the proof can deduce that $\frac{1}{\mathrm{diam}(G)}\sim \lambda_2(G)$ whenever $G$ can be uniformly embedded  into such a typical manifold, where $\lambda_2(G)$ is the second smallest  eigenvalue of the normalized  Laplacian on $G$.  
\item Inspired by the approximation theory for Laplacians on triangulations  of manifolds proposed by Dodziuk \cite{Dodziuk76} and  Dodziuk-Patodi \cite{Dodziuk76a}, we hope that it is possible to develop an approximation theory for our Cheeger constants on triangulations of manifolds. 
\end{itemize}
\end{remark}

\subsection{Background
needed for the proof of Theorem \ref{thm:Cheeger-manifold-complex}}
\label{sub:duality}

Since duality is an important ingredient in the proof, we shall give some details on it. 

\begin{lemma}[\cite{Jost/Zhang21c,TZ22+}]\label{Lemma_duality}
Assume we have a linear map $A:\R^n\to\R^m$ and two convex one-homogeneous  functions $\Phi:\R^m\rightarrow [0,+\infty)$ and $\Psi:\R^n\rightarrow [0,+\infty)$ with $\Phi(\vec y)=0$ iff $\vec y=0$ and $\Psi(\vec x)=0$ iff $\vec x=0$.
Then the nonzero  eigenvalues of the following nonlinear eigenproblem \begin{equation}\label{eq:eigen}
 0\in\nabla  \Phi(A\mathbf{x})-\lambda \nabla\Psi( \mathbf{x})  
\end{equation}
and  the nonzero  eigenvalues of the dual eigenproblem
\begin{equation}\label{eq:dual-eigen}
 0\in\nabla  \Psi_*(A^\top\mathbf{y})-\lambda \nabla\Phi_*( \mathbf{y})  
\end{equation}
coincide exactly, 
where $\Phi_*(\mathbf{y})=\sup\limits_{\mathbf{y}'\ne0} \langle \mathbf{y}, \mathbf{y}'\rangle/\Phi(\mathbf{y}')$ and $\Psi_*(\mathbf{x})=\sup\limits_{\mathbf{x}'\ne0} \langle \mathbf{x}, \mathbf{x}'\rangle/\Psi(\mathbf{x}')$. 
\end{lemma}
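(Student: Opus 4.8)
The plan is to pass from the eigen-inclusion to a purely algebraic compatibility condition between an eigenvector $\mathbf{x}$ and its image $A\mathbf{x}$, and then read off a dual eigenvector by moving to the polar gauges. Since $\Phi,\Psi$ are finite, convex and one-homogeneous, they are gauges (Minkowski functionals of $\{\Phi\le 1\}$, $\{\Psi\le 1\}$), $\Phi_*,\Psi_*$ are the polar gauges, they are finite-valued, and $\Phi_{**}=\Phi$, $\Psi_{**}=\Psi$. The subdifferential chain rule (valid because $\Phi$ is finite-valued and $A$ linear) gives $\nabla(\Phi\circ A)(\mathbf{x})=A^\top\nabla\Phi(A\mathbf{x})$, so \eqref{eq:eigen} says exactly: there exist $\mathbf{x}\neq 0$ and $\mathbf{w}\in\nabla\Phi(A\mathbf{x})$ with $A^\top\mathbf{w}\in\lambda\,\nabla\Psi(\mathbf{x})$; and \eqref{eq:dual-eigen} is the same statement for the triple $(\Psi_*,\Phi_*,A^\top)$.

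The first step is a short gauge/polar-gauge dictionary, each item a one-line consequence of the subgradient inequality and one-homogeneity: (i) the generalized Cauchy--Schwarz inequality $\langle\mathbf{w},\mathbf{u}\rangle\le\Phi_*(\mathbf{w})\Phi(\mathbf{u})$; (ii) $\mathbf{w}\in\nabla\Phi(\mathbf{u})$ iff $\langle\mathbf{w},\mathbf{u}\rangle=\Phi(\mathbf{u})$ and $\Phi_*(\mathbf{w})\le 1$, and if moreover $\Phi(\mathbf{u})>0$ then in fact $\Phi_*(\mathbf{w})=1$ (forced by (i)) and $\mathbf{u}\in\Phi(\mathbf{u})\,\nabla\Phi_*(\mathbf{w})$ (check the two defining conditions for $\mathbf{u}/\Phi(\mathbf{u})\in\nabla\Phi_*(\mathbf{w})$ using $\Phi_{**}=\Phi$); (iii) the positive-scaling invariance $\nabla\Phi(t\mathbf{u})=\nabla\Phi(\mathbf{u})$ for $t>0$. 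The same holds with $\Phi$ replaced by $\Psi$, $\Phi_*$, $\Psi_*$.

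The second step is the primal-to-dual construction. Start from a solution $(\lambda,\mathbf{x})$ of \eqref{eq:eigen} with $\lambda\neq 0$, and write $A^\top\mathbf{w}=\lambda\mathbf{v}$ with $\mathbf{v}\in\nabla\Psi(\mathbf{x})$. Since $\mathbf{x}\neq 0$ we have $\Psi(\mathbf{x})>0$ and $\langle\mathbf{w},A\mathbf{x}\rangle=\langle A^\top\mathbf{w},\mathbf{x}\rangle=\lambda\langle\mathbf{v},\mathbf{x}\rangle=\lambda\Psi(\mathbf{x})\neq 0$, whence $A\mathbf{x}\neq 0$, $\Phi(A\mathbf{x})>0$, $\mathbf{w}\neq 0$, $A^\top\mathbf{w}=\lambda\mathbf{v}\neq 0$; combining with $\langle\mathbf{w},A\mathbf{x}\rangle=\Phi(A\mathbf{x})$ from (ii) gives $\lambda=\Phi(A\mathbf{x})/\Psi(\mathbf{x})>0$. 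Now put $\mathbf{y}:=\mathbf{w}$ and $\mathbf{p}:=\mathbf{x}/\Psi(\mathbf{x})$. By (ii), $A\mathbf{x}\in\Phi(A\mathbf{x})\nabla\Phi_*(\mathbf{y})=\lambda\Psi(\mathbf{x})\nabla\Phi_*(\mathbf{y})$, so $A\mathbf{p}\in\lambda\,\nabla\Phi_*(\mathbf{y})$; and by (ii)--(iii) applied to $\Psi$, $\mathbf{p}\in\nabla\Psi_*(\mathbf{v})=\nabla\Psi_*(\lambda\mathbf{v})=\nabla\Psi_*(A^\top\mathbf{y})$. Therefore $0=A\mathbf{p}-A\mathbf{p}\in A\,\nabla\Psi_*(A^\top\mathbf{y})-\lambda\,\nabla\Phi_*(\mathbf{y})=\nabla(\Psi_*\circ A^\top)(\mathbf{y})-\lambda\,\nabla\Phi_*(\mathbf{y})$, i.e. $(\lambda,\mathbf{y})$ solves \eqref{eq:dual-eigen} with $\mathbf{y}\neq 0$.

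The converse implication is then free: $(\Phi,\Psi,A)\mapsto(\Psi_*,\Phi_*,A^\top)$ is an involution on such triples (since $\Phi_{**}=\Phi$, $\Psi_{**}=\Psi$, $(A^\top)^\top=A$), so applying the construction just described to $(\Psi_*,\Phi_*,A^\top)$ turns a solution of \eqref{eq:dual-eigen} into one of \eqref{eq:eigen} with the same $\lambda$; in fact the two constructions are mutually inverse, so they give a bijection between nonzero eigenvectors up to positive scaling, matching the nonzero spectra with multiplicities. The main obstacle is not a single deep step but the careful bookkeeping in the dictionary of step one --- pinning down the normalization $\Phi_*(\mathbf{w})=1$, the exposed-face identity $\mathbf{u}\in\Phi(\mathbf{u})\nabla\Phi_*(\mathbf{w})$, and the positive-homogeneity scaling $\nabla\Phi(t\mathbf{u})=\nabla\Phi(\mathbf{u})$ --- together with verifying that once $\lambda\neq 0$ is imposed the degenerate cases ($A\mathbf{x}=0$, $\mathbf{w}=0$, $A^\top\mathbf{w}=0$) are automatically excluded; these are precisely the places where a careless argument would divide by zero or drop a scalar factor.
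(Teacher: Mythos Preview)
The paper does not actually prove this lemma: it is quoted verbatim from the references \cite{Jost/Zhang21c,TZ22+} and used as a black box in the proof of Claim~2.2. So there is no ``paper's own proof'' to compare against. Your argument is therefore a genuine, self-contained proof of a result the paper only cites.

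Your proof is correct. The dictionary in step one is exactly the standard subdifferential calculus for gauges, and the crucial reversal $w\in\nabla\Phi(u)\Rightarrow u\in\Phi(u)\,\nabla\Phi_*(w)$ (when $\Phi(u)>0$) is checked cleanly. In step two the chain $\langle w,Ax\rangle=\lambda\Psi(x)\neq 0$ rules out all degeneracies at once, forces $\lambda=\Phi(Ax)/\Psi(x)>0$, and the two inclusions $Ap\in\lambda\,\nabla\Phi_*(y)$ and $p\in\nabla\Psi_*(A^\top y)$ then give the dual eigen-inclusion with $y=w\neq 0$. The involution argument for the converse is the right way to avoid repeating the computation.

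One small overreach: your final sentence asserts that the two constructions are mutually inverse and yield a bijection of eigenvectors ``matching the nonzero spectra with multiplicities''. That is more than the lemma claims and more than you have shown, because the passage $x\mapsto y$ depends on a \emph{choice} of $w\in\nabla\Phi(Ax)$, which is not unique when the unit ball of $\Phi_*$ has a face containing more than one point exposed by $Ax$. The lemma as stated only asks that the \emph{sets} of nonzero eigenvalues agree, and for that your construction (one direction) plus the involution (the other) is already enough; you can simply drop the multiplicity remark.
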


\begin{lemma}[\cite{Jost/Zhang21c,TZ22+}]\label{Lemma_duality2}
The smallest nonzero eigenvalue of \eqref{eq:eigen} equals
$$
\min_{\mathbf{x}\not\in \mathrm{Ker}(A)}\frac{\Phi(A\mathbf{x})}{\min_{\mathbf{z} \in \mathrm{Ker}(A)}\Psi( \mathbf{x}+\mathbf{z})},
$$
and dually, the smallest nonzero eigenvalue of  \eqref{eq:dual-eigen} is equal to
$$
\min_{\mathbf{y}\not\in \mathrm{Ker}(A^\top)}\frac{\Psi_*(A^\top\mathbf{y})}{\min_{\mathbf{u} \in \mathrm{Ker}(A^\top)}\Phi_*( \mathbf{y}+\mathbf{u})}.
$$
\end{lemma}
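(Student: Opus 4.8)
The plan is to establish the two displayed identities separately and then to read off the dual one from the first. Throughout, write $F(\mathbf{x}):=\Phi(A\mathbf{x})$ and $G(\mathbf{x}):=\min_{\mathbf{z}\in\mathrm{Ker}(A)}\Psi(\mathbf{x}+\mathbf{z})$; both are convex, positively one-homogeneous and nonnegative, with $F(\mathbf{x})=0$ iff $\mathbf{x}\in\mathrm{Ker}(A)$, and $G$ descends to a genuine norm on the quotient $\R^n/\mathrm{Ker}(A)$ (so $G(\mathbf{x})=0$ iff $\mathbf{x}\in\mathrm{Ker}(A)$, using positivity of $\Psi$ off $\mathbf 0$). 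Let $\mu$ denote the right-hand side $\min_{\mathbf{x}\notin\mathrm{Ker}(A)}F(\mathbf{x})/G(\mathbf{x})$. On the quotient, $F/G$ becomes a continuous one-homogeneous function that is positive off the origin, so the minimum is attained on the compact unit sphere of the quotient norm; choosing a minimizing coset and then the representative $\bar{\mathbf{x}}$ inside it with $G(\bar{\mathbf{x}})=\Psi(\bar{\mathbf{x}})$ (possible since $\Psi$ is coercive), we obtain a point satisfying $F(\bar{\mathbf{x}})=\mu\,\Psi(\bar{\mathbf{x}})$ and $F(\mathbf{x})\ge\mu\,G(\mathbf{x})$ for all $\mathbf{x}$.

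First I would show $\mu$ is an eigenvalue of \eqref{eq:eigen} with eigenvector $\bar{\mathbf{x}}$. Write $F=\sigma_K$, $G=\sigma_L$ as support functions of the compact convex sets $K=\partial F(\mathbf 0)$, $L=\partial G(\mathbf 0)$. The global inequality $F\ge\mu G$ is equivalent to the inclusion $\mu L\subseteq K$, and the equality at $\bar{\mathbf{x}}$ forces a maximizer $\mathbf w^\ast$ of $\langle\cdot,\bar{\mathbf{x}}\rangle$ over $\mu L$ to also maximize $\langle\cdot,\bar{\mathbf{x}}\rangle$ over $K$, whence $\mathbf w^\ast\in\partial F(\bar{\mathbf{x}})\cap\mu\,\partial G(\bar{\mathbf{x}})$. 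By the chain rule $\partial F(\bar{\mathbf{x}})=A^\top\partial\Phi(A\bar{\mathbf{x}})$ (no qualification needed, $\Phi$ being finite), and $\partial G(\bar{\mathbf{x}})\subseteq\partial\Psi(\bar{\mathbf{x}})$ since $\bar{\mathbf{x}}$ already minimizes $\Psi$ over its coset (infimal-projection subdifferential rule). Hence there is $\boldsymbol\eta\in\partial\Phi(A\bar{\mathbf{x}})$ with $A^\top\boldsymbol\eta=\mathbf w^\ast\in\mu\,\partial\Psi(\bar{\mathbf{x}})$, i.e. $0\in\nabla\Phi(A\bar{\mathbf{x}})-\mu\nabla\Psi(\bar{\mathbf{x}})$; and $\mu=F(\bar{\mathbf{x}})/\Psi(\bar{\mathbf{x}})>0$ because $\bar{\mathbf{x}}\notin\mathrm{Ker}(A)$.

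Conversely, for any nonzero eigenvalue $\lambda$ with eigenvector $\mathbf{x}$, say $A^\top\boldsymbol\eta=\lambda\mathbf v$ with $\boldsymbol\eta\in\partial\Phi(A\mathbf{x})$, $\mathbf v\in\partial\Psi(\mathbf{x})$, Euler's relation for one-homogeneous convex functions gives $\Phi(A\mathbf{x})=\langle\boldsymbol\eta,A\mathbf{x}\rangle=\langle A^\top\boldsymbol\eta,\mathbf{x}\rangle=\lambda\langle\mathbf v,\mathbf{x}\rangle=\lambda\Psi(\mathbf{x})$; if $\mathbf{x}\in\mathrm{Ker}(A)$ this forces $\lambda=0$, so $\mathbf{x}\notin\mathrm{Ker}(A)$, and $\mathbf v=\lambda^{-1}A^\top\boldsymbol\eta\in\mathrm{Image}(A^\top)=\mathrm{Ker}(A)^\perp$, which by Proposition 2.3 in \cite{Jost/Zhang21c} yields $\Psi(\mathbf{x})=G(\mathbf{x})$. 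Thus $\lambda=\Phi(A\mathbf{x})/\Psi(\mathbf{x})=F(\mathbf{x})/G(\mathbf{x})\ge\mu$, so $\mu$ is the smallest nonzero eigenvalue of \eqref{eq:eigen}. The dual formula then follows by applying this same result to \eqref{eq:dual-eigen}, which has exactly the same shape with $A,\Phi,\Psi$ replaced by $A^\top,\Psi_\ast,\Phi_\ast$ (again convex, one-homogeneous, finite, and positive off the origin); by Lemma \ref{Lemma_duality} the resulting number coincides with the smallest nonzero eigenvalue of \eqref{eq:eigen}. I expect the first direction to be the main obstacle: because the denominator is itself an infimum over $\mathrm{Ker}(A)$, one must pass to a minimal coset representative and use the support-function picture to extract a genuine eigen-inclusion from the first-order optimality; the accompanying observation that every eigenvector is automatically a minimal representative is what closes the converse inequality.
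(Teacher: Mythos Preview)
The paper does not prove this lemma; it is quoted from \cite{Jost/Zhang21c,TZ22+} and used as a black box in Section~\ref{sub:duality}. So there is no in-paper argument to compare against.

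Your argument is correct and self-contained. The support-function device in the forward direction is clean: writing $F=\sigma_K$ and $G=\sigma_L$ turns the global inequality $F\ge\mu G$ into the set inclusion $\mu L\subseteq K$, from which a common subgradient at the minimizer $\bar{\mathbf x}$ drops out immediately; the chain rule $\partial F(\bar{\mathbf x})=A^\top\partial\Phi(A\bar{\mathbf x})$ and the infimal-projection inclusion $\partial G(\bar{\mathbf x})\subseteq\partial\Psi(\bar{\mathbf x})$ (valid because $\bar{\mathbf x}$ was chosen minimal in its coset) then convert this into the eigen-inclusion. The converse via Euler's identity and the observation that $\mathbf v=\lambda^{-1}A^\top\boldsymbol\eta\in\mathrm{Ker}(A)^\perp$ forces $\Psi(\mathbf{x})=G(\mathbf{x})$ is exactly the right closing step. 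Note that this last implication does not actually require the external Proposition~2.3 of \cite{Jost/Zhang21c}: directly from the subgradient inequality, for any $\mathbf{z}\in\mathrm{Ker}(A)$ one has $\Psi(\mathbf{x}+\mathbf{z})\ge\Psi(\mathbf{x})+\langle\mathbf v,\mathbf{z}\rangle=\Psi(\mathbf{x})$. The final invocation of Lemma~\ref{Lemma_duality} is superfluous: the dual identity is simply the first identity applied verbatim to the triple $(A^\top,\Psi_\ast,\Phi_\ast)$, which satisfies the same hypotheses since the dual of a norm is again a norm.
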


Combining Lemmas \ref{Lemma_duality} and \ref{Lemma_duality2} implies 
\begin{equation}\label{eq:important}
\min_{\mathbf{x}\not\in \mathrm{Ker}(A)}\frac{\Phi(A\mathbf{x})}{\min_{\mathbf{z} \in \mathrm{Ker}(A)}\Psi( \mathbf{x}+\mathbf{z})}
=\min_{\mathbf{y}\not\in \mathrm{Ker}(A^\top)}\frac{\Psi_*(A^\top\mathbf{y})}{\min_{\mathbf{u} \in \mathrm{Ker}(A^\top)}\Phi_*( \mathbf{y}+\mathbf{u})}.
\end{equation}

Take $\Phi(\mathbf{y})=\|\mathbf{y}\|_1$ for $\mathbf{y}\in\R^{\Sigma_{d+1}}$,   $\Psi(\mathbf{x})=2\|\mathbf{x}\|_1$ for $\mathbf{x}\in\R^{\Sigma_{d}}$,  and let $A=\delta_d$. Then regarding $A$ as a matrix, the $(\sigma,\tau)$-entry of $A$ is $\mathrm{sgn}([\tau],\partial[\sigma])$, where $\sigma\in \Sigma_{d+1}$ and $\tau\in\Sigma_d$. 
Furthermore,  
$\Phi_*(\mathbf{y})=\|\mathbf{y}\|_\infty$, $\Psi_*(\mathbf{x})=\frac12\|\mathbf{x}\|_\infty$, and $|(A^\top \mathbf{y})_\tau|=|y_\sigma-y_{\sigma'}|$ whenever $\tau$ is the common facet of $\sigma$ and $\sigma'$.   The connectivity of the graph on $\Sigma_{d+1}$ equipped with the down-adjacency $\mathop{\sim}\limits^{\text{down}}$  implies $\mathrm{Ker}(A^\top)=\{t\mathbf{1}:t\in\R\}$, where $\mathbf{1}$ is constant 1 on all $\sigma\in \Sigma_{d+1}$. 
Then, \eqref{eq:important}  can be reformulated as:
$$\min\limits_{x\not\in  \mathrm{Ker}(\delta_{d})}\frac{\sum\limits_{\sigma\in \Sigma_{d+1}}\left|\sum\limits_{\tau\in \Sigma_d}\mathrm{sgn}([\tau],\partial[\sigma])x_\tau\right|}{\min\limits_{z\in \mathrm{Ker}(\delta_d)}\sum\limits_{\tau\in \Sigma_d}2|x_\tau+z_\tau|}=
\min\limits_{y\text{ non-constant}}\frac{\max\limits_{\sigma \mathop{\sim}\limits^{\text{down}} \sigma'}\frac12|y_\sigma-y_{\sigma'}|}{\min\limits_{t\in\R}\max\limits_{\sigma\in \Sigma_{d+1}}|y_\sigma+t|}$$
which gives the most important transition in Claim 2.2. 

\section{Cheeger-type  inequalities for $p$-Laplacians on simplicial complexes} 
\label{sec:p-lap}

In this section, we shall study the nonlinear eigenvalue problems for the $p$-Laplacians on simplicial complexes introduced in Section \ref{plap}. Importantly, this will provide a perspective to  unify some  Cheeger-type inequalities.

According to the main theorem in \cite{TZ22+}, the  spectral duality that we
had used for the $2$-Laplacian now becomes
\begin{pro}\label{pro:nonzero-p-Lap}
The nonzero
eigenvalues of the up $p$-Laplacians are in  one-to-one correspondence  with those of the 
down  $p^*$-Laplacians:
$$\{\lambda^{\frac 1p}:\lambda\text{ is a nonzero eigenvalue of }L_{d,p}^{up}\}=\{\lambda^{\frac {1}{p^*}}:\lambda\text{ is a nonzero eigenvalue of }L_{d+1,p^*}^{down}\},$$
where $p^*$ is the H\"older conjugate of $p$, i.e., $\frac1p+\frac{1}{p^*}=1$. 
Moreover, $\lambda^{\frac 1p}_{n-i}(L_{d,p}^{up})=\lambda^{\frac {1}{p^*}}_{m-i}(L_{d+1,p^*}^{down})$ for any $i=0,1,\cdots,\min\{n,m\}-1$, where $n=|\Sigma_d|$ and $m=|\Sigma_{d+1}|$. 
\end{pro}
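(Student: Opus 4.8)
The plan is to deduce Proposition~\ref{pro:nonzero-p-Lap} from the general nonlinear spectral duality of \cite{TZ22+} --- the same machinery recalled in Lemmas~\ref{Lemma_duality} and~\ref{Lemma_duality2} for the self-dual case $p=p^*=2$ --- after rewriting each homogeneous $\alpha_p$-eigenproblem as a one-homogeneous eigenproblem for a pair of norms. First I would record the dictionary between the two formulations. For $1<p<\infty$ and $\mathbf{x}\neq\mathbf{0}$ one has $\nabla\|\mathbf{x}\|_p=\|\mathbf{x}\|_p^{1-p}\alpha_p(\mathbf{x})$ (with the set-valued $\mathrm{Sgn}$-analogue when $p=1$). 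Hence, if $\mathbf{x}$ is an eigenvector of $L^{up}_{d,p}=B_{d+1}\alpha_p(B_{d+1}^\top\,\cdot\,)$ for a nonzero eigenvalue $\lambda$, then $B_{d+1}^\top\mathbf{x}\neq\mathbf{0}$; pairing $B_{d+1}\alpha_p(B_{d+1}^\top\mathbf{x})=\lambda\alpha_p(\mathbf{x})$ with $\mathbf{x}$ gives $\lambda=\|B_{d+1}^\top\mathbf{x}\|_p^p/\|\mathbf{x}\|_p^p$, and multiplying the eigenvalue equation by $\|B_{d+1}^\top\mathbf{x}\|_p^{1-p}$ shows that $\mathbf{x}$ solves $0\in\nabla\|B_{d+1}^\top\mathbf{x}\|_p-\theta\,\nabla\|\mathbf{x}\|_p$ with $\theta=\lambda^{1/p}$; the converse is the same computation read backwards. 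Thus the nonzero eigenvalues of $L^{up}_{d,p}$ are exactly the $p$-th powers of the nonzero eigenvalues of the norm-pair eigenproblem $(\|B_{d+1}^\top\,\cdot\,\|_p,\|\cdot\|_p)$, with identical eigenvectors, and likewise the nonzero eigenvalues of $L^{down}_{d+1,p^*}=B_{d+1}^\top\alpha_{p^*}(B_{d+1}\,\cdot\,)$ are the $p^*$-th powers of those of $(\|B_{d+1}\,\cdot\,\|_{p^*},\|\cdot\|_{p^*})$.

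Next I would apply the duality with $A=\delta_d=B_{d+1}^\top\colon\R^{\Sigma_d}\to\R^{\Sigma_{d+1}}$, $\Phi=\|\cdot\|_p$ on $\R^{\Sigma_{d+1}}$ and $\Psi=\|\cdot\|_p$ on $\R^{\Sigma_d}$; both are convex, positively one-homogeneous, and vanish only at the origin, so the hypotheses of \cite{TZ22+} are satisfied. Since the conjugate $\Phi_*(\mathbf{y})=\sup_{\mathbf{y}'\neq\mathbf{0}}\langle\mathbf{y},\mathbf{y}'\rangle/\Phi(\mathbf{y}')$ of the $\ell^p$-norm is the $\ell^{p^*}$-norm, the dual eigenproblem $0\in\nabla\Psi_*(A^\top\mathbf{y})-\theta\,\nabla\Phi_*(\mathbf{y})$ is precisely $0\in\nabla\|B_{d+1}\mathbf{y}\|_{p^*}-\theta\,\nabla\|\mathbf{y}\|_{p^*}$, the norm-pair problem for the down $p^*$-Laplacian. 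The duality theorem of \cite{TZ22+} then gives that these two problems have the same nonzero eigenvalues and matches their genus-indexed min--max values from the top, $\theta_{n-i}=\theta_{m-i}$ in the appropriate range, where $n=|\Sigma_d|$ and $m=|\Sigma_{d+1}|$. Feeding this back through the dictionary, a nonzero eigenvalue $\lambda$ of $L^{up}_{d,p}$ corresponds to $\theta=\lambda^{1/p}$, which corresponds to the nonzero eigenvalue $\theta^{p^*}$ of $L^{down}_{d+1,p^*}$; i.e.\ $\lambda^{1/p}=(\theta^{p^*})^{1/p^*}$, which yields both the claimed equality of sets and, after translating indices, the identity $\lambda^{1/p}_{n-i}(L^{up}_{d,p})=\lambda^{1/p^*}_{m-i}(L^{down}_{d+1,p^*})$.

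It remains to pin down the index range. The genus-indexed spectra of $L^{up}_{d,p}$ and of $L^{down}_{d+1,p^*}$ each contain exactly $\mathrm{rank}(B_{d+1})$ nonzero values, the remaining $n-\mathrm{rank}(B_{d+1})$, respectively $m-\mathrm{rank}(B_{d+1})$, values being $0$, because $\|B_{d+1}^\top\mathbf{x}\|_p=0$ iff $\mathbf{x}\in\mathrm{Ker}(B_{d+1}^\top)$ (resp.\ $\|B_{d+1}\mathbf{y}\|_{p^*}=0$ iff $\mathbf{y}\in\mathrm{Ker}(B_{d+1})$) and the Krasnoselskii genus of a centrally symmetric linear slice equals its dimension. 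Hence for $0\le i<\mathrm{rank}(B_{d+1})$ both $\lambda_{n-i}(L^{up}_{d,p})$ and $\lambda_{m-i}(L^{down}_{d+1,p^*})$ are positive and the displayed identity is the content of the duality, whereas for $\mathrm{rank}(B_{d+1})\le i\le\min\{n,m\}-1$ both sides vanish and the identity is trivial.

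The step I expect to be the main obstacle is the bookkeeping in the first two paragraphs: verifying that the passage between the $p$-homogeneous $\alpha_p$-eigenproblem and the one-homogeneous norm-pair eigenproblem is a genuine bijection at the level of eigenvectors for \emph{all} nonzero eigenvalues --- not only the variational ones, which for $p\neq2$ need not exhaust the spectrum --- and confirming that the min--max indexing in \cite{TZ22+} aligns under this change of variable. The endpoint $p=1$, $p^*=\infty$, which is the case actually exploited in Claim~2.2, is covered by the same argument, since $\|\cdot\|_1$ and $\|\cdot\|_\infty$ remain mutually conjugate convex one-homogeneous functions and $\alpha_1$, $\nabla\|\cdot\|_\infty$ are the corresponding set-valued subdifferentials; one need only carry the $\mathrm{Sgn}$-valued subgradients through the computation.
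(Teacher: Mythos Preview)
Your proposal is correct and follows essentially the same approach as the paper: the paper simply invokes the main theorem of \cite{TZ22+}, and you have carefully unpacked exactly that citation---recasting the $\alpha_p$-eigenproblem as a one-homogeneous norm-pair problem, applying the nonlinear spectral duality of \cite{TZ22+} with $(\Phi,\Psi)=(\|\cdot\|_p,\|\cdot\|_p)$ and $(\Phi_*,\Psi_*)=(\|\cdot\|_{p^*},\|\cdot\|_{p^*})$, and then doing the index bookkeeping via $\mathrm{rank}(B_{d+1})$. Your treatment is in fact more explicit than the paper's own one-line appeal to \cite{TZ22+}.
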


The case  $p=2$ of   Proposition \ref{pro:nonzero-p-Lap}
is of course the well-known relation between up and down  Laplacians that we
had already noted in Section \ref{sec:Sim-complex}, that is, the nonzero
eigenvalues of $L^{up}_d$ and  $L^{down}_{d+1}$ coincide. 

So, we can concentrate on the up $p$-Laplacian  for investigating the
spectra of simplicial complexes. To get more concise 
results,  we will work with the {\sl normalized up $p$-Laplace operator}
$\Delta^{up}_{d,p}$, whose eigenvalues are determined by the critical values 
of the $p$-Rayleigh quotient
\[f\mapsto\frac{\|B_{d+1}^\top f\|_p^p}{ \|f\|_{p,\deg}^p}\]
where $\|f\|_{p,\deg}^p=\sum_{\tau\in \Sigma_d}\deg \tau\cdot |f(\tau)|^p$. {Similar to \eqref{plap1}, we shall  focus on the min-max eigenvalues
\begin{equation}
\label{plap1+}\lambda_i(\Delta^{up}_{d,p}):=\inf_{\gamma(S)\ge i}\sup_{f\in
  S}\frac{\|B_{d+1}^\top f\|_p^p}{ \|f\|_{p,\deg}^p},\;\;i=1,2,\cdots,n.
\end{equation}
}

\begin{theorem}\label{thm:anti-signed-Cheeger-p}
For any simplicial complex and every $d\ge 0$, for any $p\in (1,+\infty)$, there exist uniform positive constants $C_{p,d}$ and $c_{p,d}$  such that
\begin{align*}\label{eq:Cheeger-1-complex-p}
c_{p,d}h_1(\Sigma_d)^p&\le (d+2)^{p-1}-\lambda_n(\Delta^{up}_{d,p})\;\text{ when }p\ge2
\\
C_{p,d}h_1(\Sigma_d)&\ge (d+2)^{p-1}-\lambda_n(\Delta^{up}_{d,p})\;\text{ when }1<p\le2
\end{align*}
where $n=|\Sigma_d|$. 

\end{theorem}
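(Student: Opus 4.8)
The plan is to run the same two‑sided scheme as for Theorem~\ref{thm:anti-signed-Cheeger}, but since for $p\neq2$ the sum‑of‑squares identity behind \eqref{r2} is unavailable, one has to argue simplex by simplex. Fix an orientation and, for $f\in C^d(\Sigma)$ and a $(d+1)$‑simplex $\sigma$, write $g^\sigma_\tau=\sgn([\tau],\partial[\sigma])\,f(\tau)$ for $\tau\subset\sigma$, and set
\[
F^{(p)}_\sigma(g)=(d+2)^{p-1}\sum_{\tau\subset\sigma}|g_\tau|^p-\Bigl|\sum_{\tau\subset\sigma}g_\tau\Bigr|^p ,\qquad
G^{(p)}_\sigma(g)=\sum_{\{\tau,\tau'\}\subset\sigma}|g_\tau-g_{\tau'}|^p .
\]
Summing over $\sigma$ gives the two identities I will use throughout: $(d+2)^{p-1}\|f\|_{p,\deg}^p-\|B_{d+1}^\top f\|_p^p=\sum_\sigma F^{(p)}_\sigma(g^\sigma)$, so that $(d+2)^{p-1}-\lambda_n(\Delta^{up}_{d,p})=\min_{f}\sum_\sigma F^{(p)}_\sigma(g^\sigma)/\|f\|_{p,\deg}^p$; and $\sum_\sigma G^{(p)}_\sigma(g^\sigma)=\sum_{[\tau]\sim[\tau']}|f(\tau)-s(\tau,\tau')f(\tau')|^p$, the $p$‑Dirichlet energy of the signed graph $(\Gamma_d,s)$ of Section~\ref{sec:gap-d+2}. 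At $p=2$ this reduces to the computation in the proof of Theorem~\ref{thm:anti-signed-Cheeger}.

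For the Buser (upper) bound I would first establish the purely local estimate $F^{(p)}_\sigma(g)\le C_{p,d}\,G^{(p)}_\sigma(g)$ for all $g\in\R^{d+2}$: both sides are continuous, positively $p$‑homogeneous, and vanish exactly on the diagonal $\{g_0=\dots=g_{d+1}\}$, and $F^{(p)}_\sigma(g+t\mathbf 1)\to0$ as $|t|\to\infty$, so $F^{(p)}_\sigma$ stays bounded on $\{G^{(p)}_\sigma=1\}$ and $C_{p,d}:=\sup\{F^{(p)}_\sigma/G^{(p)}_\sigma:G^{(p)}_\sigma\neq0\}<\infty$ depends only on $p,d$. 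Testing $(d+2)^{p-1}-R_p(\cdot)$ against $f=\mathbbm 1_{A_1}-\mathbbm 1_{A_2}$, where $(A_1,A_2)$ realizes $h_1(\Sigma_d)$, and using $|f(\tau)|\in\{0,1\}$ to evaluate $\sum_\sigma G^{(p)}_\sigma(g^\sigma)=2^p\bigl(|E^-(A_1)|+|E^-(A_2)|+|E^+(A_1,A_2)|\bigr)+|\partial(A_1\sqcup A_2)|\le 2^{p-1}\beta(A_1,A_2)\vol(A_1\sqcup A_2)$, one gets $(d+2)^{p-1}-\lambda_n(\Delta^{up}_{d,p})\le(d+2)^{p-1}-R_p(f)\le 2^{p-1}C_{p,d}\,h_1(\Sigma_d)$, which is the right‑hand inequality.

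The Cheeger (lower) bound is the heart of the matter, and I would derive it from a coarea/layer‑cake argument applied to an eigenfunction $f$ for $\lambda_n(\Delta^{up}_{d,p})$. For $t\ge0$ put $A_1(t)=\{f\ge t\}$, $A_2(t)=\{f\le -t\}$; since $h_1(\Sigma_d)$ is a minimum, $\beta(A_1(t),A_2(t))\,\vol(A_1(t)\sqcup A_2(t))\ge h_1(\Sigma_d)\,\vol(A_1(t)\sqcup A_2(t))$ for every $t$. A per‑simplex bookkeeping of how each edge of $(\Gamma_d,s)$ changes status as $t$ grows shows that integrating against $p\,t^{p-1}\,dt$ turns the left side into $\sum_\sigma\sum_{\{\tau,\tau'\}\subset\sigma}\bigl|\phi_p(g^\sigma_\tau)-\phi_p(g^\sigma_{\tau'})\bigr|$ with $\phi_p(x)=|x|^{p-1}x$, and the right side into $h_1(\Sigma_d)\,\|f\|_{p,\deg}^p$. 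One then bounds this $\phi_p$–total‑variation term: from $|\phi_p(a)-\phi_p(b)|\le p\max(|a|,|b|)^{p-1}|a-b|$, combined with the behaviour of $F^{(p)}_\sigma$ near the flat configuration (where, up to $p,d$‑constants, $F^{(p)}_\sigma(g)$ equals $\max_{\tau}|g_\tau|^{\,p-2}$ times $\sum_{\{\tau,\tau'\}}|g_\tau-g_{\tau'}|^2$), one controls each simplex term by $F^{(p)}_\sigma(g^\sigma)$ and $\max_{\tau\subset\sigma}|f(\tau)|$, and a Hölder step over the simplices — using $\sum_\sigma(\max_{\tau\subset\sigma}|f(\tau)|)^p\le\|f\|_{p,\deg}^p$ and $\vol(\Sigma_d)=(d+2)|\Sigma_{d+1}|$ to keep the constants free of the size of $\Sigma$ — converts the inequality $h_1(\Sigma_d)\|f\|_{p,\deg}^p\le\sum_\sigma\sum_{\{\tau,\tau'\}}|\phi_p(g^\sigma_\tau)-\phi_p(g^\sigma_{\tau'})|$ into a Cheeger‑type lower bound for $(d+2)^{p-1}-\lambda_n(\Delta^{up}_{d,p})$ in terms of a power of $h_1(\Sigma_d)$.

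The main obstacle — and where the $p$‑ and $d$‑dependent constants unavoidably enter — is precisely this last step. Unlike $p=2$, the local gap functional $F^{(p)}_\sigma$ is \emph{not} bounded below by $G^{(p)}_\sigma$: it degenerates quadratically in the deviation from the flat state while $G^{(p)}_\sigma$ degenerates only to the $p$‑th power, so the most naive pairing (a Cauchy–Schwarz over the simplices) produces a cruder exponent, and obtaining the exponent $p$ on $h_1$ requires carefully tracking this degeneracy through the Hölder estimates and through the choice of weight in the coarea formula rather than comparing Rayleigh quotients globally. A shortcut through a Cheeger inequality for $p$‑Laplacians on signed graphs (in the spirit of Atay–Liu, Theorem~\ref{thm:signed-Cheeger}) is not available for $d\ge1$, because the genuine nonlinearity means $\Delta^{up}_{d,p}$ is then not an affine image of a signed‑graph $p$‑Laplacian, which is why the simplexwise route seems forced.
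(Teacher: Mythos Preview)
For the upper bound your route and the paper's coincide: both establish the one–sided simplex estimate $F^{(p)}_\sigma\le M_{p,d+2}\,G^{(p)}_\sigma$ and then pass to the signed graph $(\Gamma_d,s)$.

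For the lower bound the paper takes exactly the shortcut you rule out. It asserts a \emph{two--sided} simplex estimate
\[
m_{p,d+2}\,G^{(p)}_\sigma(g)\ \le\ F^{(p)}_\sigma(g)\ \le\ M_{p,d+2}\,G^{(p)}_\sigma(g)\qquad(1<p\le2)
\]
and then invokes the Cheeger inequality for the signed--graph $p$--Laplacian (Amghibech; Ge--Liu--Zhang), obtaining $(d+2)^{p-1}-\lambda_n\ge m_{p,d+2}(d+1)\,\lambda_1(\Delta_p(\Gamma_d,s))\ge c_{p,d}\,h_1^p$. So the obstruction you name---that $\Delta^{up}_{d,p}$ is not an affine image of a signed $p$--Laplacian---is not the point; the link is at the level of Rayleigh quotients, via the two--sided local bound. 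What \emph{is} the point is precisely your degeneracy observation: for $1<p<2$ the lower half of that local bound is false. With $k=2$ and $g=(c{+}1,c{-}1)$ one has $G^{(p)}=2^p$ while $F^{(p)}=2^{p-1}\bigl((c{+}1)^p+(c{-}1)^p\bigr)-(2c)^p\sim 2^{p-1}p(p{-}1)\,c^{\,p-2}\to0$ as $c\to\infty$, so no $m_{p,2}>0$ exists. (The paper's argument that $t\mapsto g(\mathbf{x},t,p)$ is eventually increasing uses, in effect, convexity of $s\mapsto s^{p-1}$, valid only for $p\ge2$; for $p<2$ the function is concave and $g$ is eventually decreasing in $t$.) Thus the paper's own proof of the Cheeger side of \eqref{eq:Cheeger-1-complex-p} appears to have a gap for $1<p<2$.

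Your coarea sketch does not close it either. Carrying it through gives $h_1\|f\|_{p,\deg}^p\le \sum_{\tau\sim\tau'}\bigl|\phi_p(f(\tau))-s(\tau,\tau')\phi_p(f(\tau'))\bigr|$; the natural H\"older step bounds the right side by $\bigl(\sum_\sigma G^{(p)}_\sigma\bigr)^{1/p}\|f\|_{p,\deg}^{p-1}$ (this is exactly Amghibech's argument) and yields $\lambda_1(\Delta_p(\Gamma_d,s))\ge c\,h_1^p$---but to return to $\sum_\sigma F^{(p)}_\sigma$ you again need $F^{(p)}_\sigma\ge m\,G^{(p)}_\sigma$. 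If instead you bypass $G^{(p)}_\sigma$ via $|\phi_p(a)-\phi_p(b)|\le p\max(|a|,|b|)^{p-1}|a-b|$ together with the near--flat expansion $F^{(p)}_\sigma\asymp(\max_\tau|g_\tau|)^{p-2}\sum_{\{\tau,\tau'\}}(g_\tau-g_{\tau'})^2$, then a Cauchy--Schwarz over simplices gives only $(d+2)^{p-1}-\lambda_n\ge c\,h_1^{2}$, not $h_1^{p}$. So as written neither your proposal nor the paper's proof delivers the exponent $p$ on the Cheeger side for $1<p<2$; getting it (if it holds) seems to require a genuinely new ingredient.
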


\begin{proof}
We need the following key claim.

\textbf{Claim}.  For any $p>1$, and for any integer $k\ge 2$, there exist $m_{p,k}>0$ (if $ p\ge 2$) and $M_{p,k} >0$ (if $1<p\le  2$) such that, for any $\mathbf{x}\in\R^k$,
\begin{align*}
m_{p,k}{\sum_{1\le i<j\le k}|x_i-x_j|^p}&\le {k^{p-1}\sum_{i=1}^k|x_i|^p-\left|\sum_{i=1}^kx_i\right|^p} \; \text{  if }p\ge 2  ,\\
M_{p,k}{\sum_{1\le i<j\le k}|x_i-x_j|^p}&\ge {k^{p-1}\sum_{i=1}^k|x_i|^p-\left|\sum_{i=1}^kx_i\right|^p}\; \text{  if }1<p\le 2 .
\end{align*}

\textbf{Proof}. It suffices to prove that 
$$m_{p,k}'\coloneqq \inf\limits_{x\text{ non-constant}}\frac{k^{p-1}\sum_{i=1}^k|x_i|^p-|\sum_{i=1}^kx_i|^p}{\sum_{i,j=1}^k|x_i-x_j|^p}>0$$
for $p\ge2$, and
$$M_{p,k}'\coloneqq \sup\limits_{x\text{ non-constant}}\frac{k^{p-1}\sum_{i=1}^k|x_i|^p-|\sum_{i=1}^kx_i|^p}{\sum_{i,j=1}^k|x_i-x_j|^p}<+\infty$$
for $1<p\le 2$. 
Clearly, $\sum_{i,j=1}^k|x_i-x_j|^p>0$ if and only if  $\mathbf{x}$ is non-constant. By H\"older's inequality, $k^{p-1}\sum_{i=1}^k|x_i|^p\ge|\sum_{i=1}^kx_i|^p$ and equality holds if and only if $\mathbf{x}$ is constant. Therefore,  $\sum_{i,j=1}^k|x_i-x_j|^p>0$ if and only if $k^{p-1}\sum_{i=1}^k|x_i|^p-|\sum_{i=1}^kx_i|^p>0$. Now, given $\delta>0$, for any vector $\mathbf{x}$ satisfying $\max\limits_i x_i-\min\limits_i x_i=\delta$, we have $\delta^p\le \sum_{i,j=1}^k|x_i-x_j|^p\le k^2 \delta^p$. 

Let $g(\mathbf{x},t,p)=k^{p-1}\sum_{i=1}^k|x_i+t|^p-|\sum_{i=1}^kx_i+kt|^p$, for $\mathbf{x}\bot\mathbf{1}$ with $\mathbf{x}\ne \mathbf{0}$, $t\in\R$ and $p\ge1$. 

Since $\mathbf{x}$ is non-constant and $p>1$, by H\"older's inequality, we have $g(\mathbf{x},t,p)>0$. Note that $\partial_t g(\mathbf{x},t,p)=pk^{p-1}\sum_{i=1}^k|x_i+t|^{p-1}\mathrm{sign}(x_i+t)-pk|kt|^{p-1}\mathrm{sign}(kt)$, where we used the assumption $\mathbf{x}\bot\mathbf{1}$, that is, $\sum_{i=1}^kx_i=0$. Fix $p>2$. If $t>k\delta$, by H\"older's inequality, $\partial_t g(\mathbf{x},t,p)>0$. Similarly, if $t<-k\delta$, 
then $\partial_t g(\mathbf{x},t,p)<0$. Therefore, $t\mapsto g(\mathbf{x},t,p)$ reaches its minimum at some point in $[-k\delta,k\delta]$, implying that 
$$\min\limits_{t\in\R}g(\mathbf{x},t,p)=\min\limits_{-k\delta\le t\le k\delta}g(\mathbf{x},t,p)$$ is a continuous function on the compact set $$\Big\{\mathbf{x}\in\R^k:\sum_{i=1}^k x_i=0,\max\limits_i x_i-\min\limits_i x_i=\delta\Big\}.$$ 
Hence, $\min\limits_{\mathbf{x}\bot \mathbf{1},\max\limits_i x_i-\min\limits_i x_i=\delta}\min\limits_{t\in\R}g(\mathbf{x},t,p)>0$. Thus, 
\begin{align*}
 \inf\limits_{x\text{ non-constant}}\frac{k^{p-1}\sum_{i=1}^k|x_i|^p-|\sum_{i=1}^kx_i|^p}{\sum_{i,j=1}^k|x_i-x_j|^p}&=\inf\limits_{\mathbf{x}\bot\mathbf{1},\max\limits_i x_i-\min\limits_i x_i=\delta}\min\limits_{t\in\R}\frac{g(\mathbf{x},t,p)}{\sum_{i,j=1}^k|x_i-x_j|^p}  
 \\&\ge \frac{1}{k^2\delta^p}\min\limits_{\mathbf{x}\bot\mathbf{1},\max\limits_i x_i-\min\limits_i x_i=\delta}\min\limits_{t\in\R}g(\mathbf{x},t,p)>0.
\end{align*}

Now, $g(\mathbf{x},t,2)=\sum_{\{i,j\}\subset\{1,\ldots,k\}}(x_i-x_j)^2$ and $g(\mathbf{x},t,1)\ge0$.

Furthermore, $\partial_p g(\mathbf{x},t,p)=\frac1k \sum_{i=1}^k|kx_i+kt|^p\ln|kx_i+kt|-|\sum_{i=1}^kx_i+kt|^p\ln|\sum_{i=1}^kx_i+kt|$. Since $s\mapsto s^p\ln s$ is a convex and increasing function on $s\in(1,+\infty)$, by Jensen's inequality, we have $\partial_p g(\mathbf{x},t,p)>0$ whenever $|t|>\delta+1/k$ and $p>1$. Therefore, 
$$g(\mathbf{x},t,1)< g(\mathbf{x},t,p)\le \sum_{\{i,j\}\subset\{1,\ldots,k\}}(x_i-x_j)^2$$
whenever $|t|>\delta+1/k$ and $1<p\le 2$. Consequently, \begin{align*}
 \sup\limits_{x\text{ non-constant}}\frac{k^{p-1}\sum_{i=1}^k|x_i|^p-|\sum_{i=1}^kx_i|^p}{\sum_{i,j=1}^k|x_i-x_j|^p}&=\sup\limits_{\mathbf{x}\bot\mathbf{1},\max\limits_i x_i-\min\limits_i x_i=\delta}\max\limits_{t\in\R}\frac{g(\mathbf{x},t,p)}{\sum_{i,j=1}^k|x_i-x_j|^p}  
 \\&\le \frac{1}{\delta^p}\max\limits_{\mathbf{x}\bot\mathbf{1},\max\limits_i x_i-\min\limits_i x_i=\delta}\max\limits_{t\in\R}g(\mathbf{x},t,p)<+\infty.
\end{align*}
The proof of the claim is completed. 

\vspace{0.3cm}
Now we apply the above claim to estimate the spectral gap of $\lambda_n(\Delta^{up}_{d,p})$ from $(d+2)^{p-1}$. Note that if $1<p\le 2$,
\begin{align*}
&(d+2)^{p-1}-\lambda_n(\Delta^{up}_{d,p})\\=~&(d+2)^{p-1}-\sup\limits_{f\ne0} \frac{\sum\limits_{\sigma\in \Sigma_{d+1}}\left|\sum_{\tau\in \Sigma_d,\tau\subset\sigma}\mathrm{sgn}([\tau],\partial[\sigma])f(\tau)\right|^p}{\sum_{\tau\in \Sigma_d} \deg\tau \cdot |f(\tau)|^p}
\\=~&\inf\limits_{f\ne0} \frac{(d+2)^{p-1}\sum_{\tau\in \Sigma_d} \deg\tau \cdot |f(\tau)|^p-\sum\limits_{\sigma\in \Sigma_{d+1}}\left|\sum_{\tau\in \Sigma_d,\tau\subset\sigma}\mathrm{sgn}([\tau],\partial[\sigma])f(\tau)\right|^p}{\sum_{\tau\in \Sigma_d} \deg\tau \cdot |f(\tau)|^p}
\\=~&\inf\limits_{f\ne0}\frac{(d+2)^{p-1}\sum\limits_{\sigma\in \Sigma_{d+1}}\sum\limits_{\tau\in \Sigma_d,\tau\subset \sigma}|f(\tau)|^p-\sum\limits_{\sigma\in \Sigma_{d+1}}\left|\sum_{\tau\in \Sigma_d,\tau\subset\sigma}\mathrm{sgn}([\tau],\partial[\sigma])f(\tau)\right|^p}{\sum_{\tau\in \Sigma_d} \deg\tau \cdot |f(\tau)|^p}
\\=~&\inf\limits_{f\ne0}\frac{\sum\limits_{\sigma\in \Sigma_{d+1}}\left((d+2)^{p-1}\sum\limits_{\tau\in \Sigma_d,\tau\subset \sigma}|f(\tau)|^p- \left|\sum_{\tau\in \Sigma_d,\tau\subset\sigma}\mathrm{sgn}([\tau],\partial[\sigma])f(\tau)\right|^p\right)}{\sum_{\tau\in \Sigma_d} \deg\tau \cdot |f(\tau)|^p}
\\\le~&\inf\limits_{f\ne0}\frac{\sum\limits_{\sigma\in \Sigma_{d+1}}M_{p,d+2}\sum\limits_{{\tau,\tau'\in \Sigma_d,\tau,\tau'\subset\sigma}}  |\mathrm{sgn}([\tau],\partial[\sigma])f(\tau)-\mathrm{sgn}([\tau'],\partial[\sigma])f(\tau') |^p}{\sum_{\tau\in \Sigma_d} \deg\tau \cdot |f(\tau)|^p}
\\=~&M_{p,d+2}(d+1)\inf\limits_{f\ne0}\frac{\sum\limits_{{\tau,\tau'\in \Sigma_d,\tau\sim^d\tau'}}  |f(\tau)-s([\tau'],[\tau])f(\tau') |^p}{\sum_{\tau\in \Sigma_d} \widetilde{\deg}\tau \cdot |f(\tau)|^p}
\\=~&M_{p,d+2}(d+1) \lambda_1
(\Delta_p(\Gamma_d,s))\le M_{p,d+2}(d+1)2^{p-1} h(\Gamma_d,s)=M_{p,d+2}2^{p-1} h_1(\Sigma_d)
\end{align*}
where  $\widetilde{\deg}\,\tau=(d+1)\deg\tau$ is the degree of $\tau$ in  $(\Gamma_d,s)$. Similarly, if $p\ge 2$,
\begin{align*}
(d+2)^{p-1}-\lambda_n(\Delta^{up}_{d,p})&\ge m_{p,d+2}(d+1)\lambda_1(\Delta_p(\Gamma_d,s))
\\&\ge m_{p,d+2}(d+1)2^{p-1}\frac{h^p(\Gamma_d,s)}{p^p}=m_{p,d+2} \frac{h^p}{p^p}\left(\frac{2}{d+1}\right)^{p-1}    
\end{align*}
where we used a Cheeger inequality for the  $p$-Laplacian on signed graphs from \cite{Amghibech,GLZ22}. 

Therefore,  we can always take $$c_{p,d}=
\frac{m_{p,d+2}2^{p-1}}{p^p(d+1)^{p-1}}\text{ for }p\ge 2\;\;\text{ and }\;
C_{p,d}=2^{p-1}M_{p,d+2}\text{ for }1<p\le 2.$$ While, for the case of $p=2$ (already treated in Theorem \ref{thm:anti-signed-Cheeger}), it follows from 
$$ k\sum_{i=1}^kx_i^2-\left(\sum_{i=1}^kx_i\right)^2=\sum_{1\le i<j\le k}(x_i-x_j)^2$$
that we can choose  $m_{2,k}=M_{2,k}=1$, for any $k\ge2$, and $c_{2,d}=\frac{1}{2(d+1)}$ and $C_{2,d}=2$ for any $d\ge0$. 
\end{proof}

\begin{remark}\label{rem:k-way-p}
In fact, we can further prove that  there exist   absolute constants $C_{p,d}'>0$ and $ c_{p,d}'>0$  such that for any simplicial complex, and for any $k\ge 1$, 
\begin{align*}
 (d+2)^{p-1}-\lambda_{n+1-k}'(\Delta^{up}_{d,p})&\le C_{p,d}'h_k(\Sigma_{d}),\;\text{ if }p\in (1,2],\\
(d+2)^{p-1}-\lambda_{n+1-k}'(\Delta^{up}_{d,p})&\ge c_{p,d}'\frac{ h_k(\Sigma_d)^p}{k^{3p}},\;\text{ if }p\in [2,+\infty),
\end{align*}
where \[\lambda_{n+1-k}'(\Delta^{up}_{d,p}):=\sup_{\gamma(S)\ge k}\inf_{f\in S} \frac{\sum\limits_{\sigma\in \Sigma_{d+1}}\left|\sum_{\tau\in \Sigma_d,\tau\subset\sigma}\mathrm{sgn}([\tau],\partial[\sigma])f(\tau)\right|^p}{\sum_{\tau\in \Sigma_d} \deg\tau \cdot |f(\tau)|^p}\]
indicates the $(n+1-k)$-th max-min eigenvalue. 
Clearly, $\lambda_n'(\Delta^{up}_{d,p})=\lambda_n(\Delta^{up}_{d,p})$ for any $p$.

For simplicity, and to avoid tedious processes, we just sketch the proof below. 
First, using the claim in the proof of Theorem \ref{thm:anti-signed-Cheeger-p}, we have
\begin{align*}
(d+2)^{p-1}-\lambda_{n+1-k}'(\Delta^{up}_{d,p})&\le (d+1)M_{p,d}\lambda_{k}(\Delta_p(\Gamma_d,s))    ,\;\text{ if }p\in (1,2],\\
(d+2)^{p-1}-\lambda_{n+1-k}'(\Delta^{up}_{d,p})&
\ge (d+1)m_{p,d}\lambda_{k}(\Delta_p(\Gamma_d,s)),\;\text{ if }p\in [2,+\infty),
\end{align*}
where $\Delta_p(\Gamma_d,s)$ represents the $p$-Laplacian on the signed graph $(\Gamma_d,s)$. 
By a slightly modified variant 
of Theorem 1.4 in \cite{Zhang21}, and by Theorem \ref{thm:signed-Cheeger}, we derive that for any $p\in[1,+\infty)$, \[\lambda_{k}(\Delta_p(\Gamma_d,s))\le 2^{p-1} \frac{h_k(\Sigma_d)}{d+1}
\]
 and, for any $p\ge 2$, 
\[\lambda_{k}(\Delta_p(\Gamma_d,s))\ge 2^{\frac p2-1}\left(\frac 2p\right)^p \lambda_{k}(\Delta_p(\Gamma_d,s))^{\frac p2}\ge  2^{\frac p2-1}\left(\frac 2p\right)^p \left(\frac{1}{Ck^6}\Big(\frac{h_k(\Sigma_d)}{d+1}\Big)^2\right)^{\frac p2}.\] 
The proof is then completed by combining the above  inequalities.
\end{remark}

\begin{remark}
Theorem \ref{thm:anti-signed-Cheeger} can be recovered by taking $p=2$ in Theorem \ref{thm:anti-signed-Cheeger-p} and Remark \ref{rem:k-way-p}. 
\end{remark}

The last result gives a nonlinear version of the main theorem in Section \ref{sec:gap-0}. 
We put
$$\lambda_{I_d}(\Delta^{up}_{d,p})=\min\limits_{x\bot \mathrm{Image}(B_d^\top)}\frac{\|B_{d+1}^\top\vec x\|_p^p}{\min\limits_{y\in \mathrm{Image}(B_d^\top)}\|\vec x+\vec y\|_{p,\deg}^p}$$ which indicates the first nontrivial eigenvalue of $\Delta^{up}_{d,p}$.

\begin{pro}\label{pro:rough-Cheeger-p}
Suppose that $\deg \tau>0$, $\forall \tau\in \Sigma_d$. Then,  for any $p\ge 1$,
$$\frac{h^p(\Sigma_d)}{|\Sigma_{d+1}|^{p-1}}\le \lambda_{I_d}(\Delta_{d,p}^{up})\le \vol(\Sigma_d)^{p-1}h(\Sigma_d).$$
\end{pro}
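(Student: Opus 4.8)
The plan is to imitate, for a general exponent $p\ge 1$, the elementary norm-comparison argument that proves the case $p=2$ (Proposition \ref{pro:rough-Cheeger}), replacing the comparison of $\ell^1$- with $\ell^2$-quantities by one of $\ell^1$- with $\ell^p$-quantities. Recall from the reformulation \eqref{eq:B-inf} and the definition (D3) that $h(\Sigma_d)=\lambda_{I_d}(\Delta^{up}_{d,1})$ is the quotient Rayleigh minimum
$$h(\Sigma_d)=\min_{0\ne[\mathbf{x}]}\frac{\|B_{d+1}^\top\mathbf{x}\|_1}{\|[\mathbf{x}]\|_{1,\deg}},\qquad \|[\mathbf{x}]\|_{1,\deg}:=\min_{\mathbf{z}\in\mathrm{Image}(B_d^\top)}\|\mathbf{x}+\mathbf{z}\|_{1,\deg},$$
the minimum running over the nonzero classes of $\R^n/\mathrm{Image}(B_d^\top)$, while $\lambda_{I_d}(\Delta^{up}_{d,p})$ is, by the definition given just above the proposition, the same expression with $\|B_{d+1}^\top\cdot\|_1$ replaced by $\|B_{d+1}^\top\cdot\|_p^p$ and $\|\cdot\|_{1,\deg}$ replaced by the quotient of $\|\cdot\|_{p,\deg}^p$. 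Hence it suffices to compare these two Rayleigh quotients classwise and then take minima.

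For the numerators, which live in $\R^{\Sigma_{d+1}}$, the standard inequalities between the $\ell^1$- and $\ell^p$-norms give $\|B_{d+1}^\top\mathbf{x}\|_p^p\le\|B_{d+1}^\top\mathbf{x}\|_1^p\le|\Sigma_{d+1}|^{p-1}\|B_{d+1}^\top\mathbf{x}\|_p^p$. For the denominators, regarding $\|\cdot\|_{1,\deg}$ and $\|\cdot\|_{p,\deg}^p$ as the $L^1$- and $L^p$-norms for the measure $\mu(\tau)=\deg\tau$ on $\Sigma_d$ (of total mass $\vol(\Sigma_d)$, every atom being $\ge 1$ since the degrees are positive integers), H\"older's inequality together with the crude bound $\max_\tau|x_\tau|\le\|\mathbf{x}\|_{1,\deg}/\min_\tau\deg\tau$ yields $(\min_\tau\deg\tau)^{p-1}\|\mathbf{x}\|_{p,\deg}^p\le\|\mathbf{x}\|_{1,\deg}^p\le\vol(\Sigma_d)^{p-1}\|\mathbf{x}\|_{p,\deg}^p$; taking the infimum over the representatives of a fixed class, the same two-sided estimate passes to the quotient norms. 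Combining the numerator and denominator estimates gives, classwise,
$$\frac{1}{\vol(\Sigma_d)^{p-1}}\,\frac{\|B_{d+1}^\top\mathbf{x}\|_p^p}{\|[\mathbf{x}]\|_{p,\deg}^p}\ \le\ \left(\frac{\|B_{d+1}^\top\mathbf{x}\|_1}{\|[\mathbf{x}]\|_{1,\deg}}\right)^{\!p}\ \le\ \frac{|\Sigma_{d+1}|^{p-1}}{(\min_\tau\deg\tau)^{p-1}}\,\frac{\|B_{d+1}^\top\mathbf{x}\|_p^p}{\|[\mathbf{x}]\|_{p,\deg}^p}.$$

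Minimizing over all nonzero classes (using that $t\mapsto t^p$ is increasing, so the minimum of the $p$-th power is the $p$-th power of the minimum) gives
$$\frac{\lambda_{I_d}(\Delta^{up}_{d,p})}{\vol(\Sigma_d)^{p-1}}\ \le\ h(\Sigma_d)^p\ \le\ \frac{|\Sigma_{d+1}|^{p-1}}{(\min_\tau\deg\tau)^{p-1}}\,\lambda_{I_d}(\Delta^{up}_{d,p}),$$
and rearranging, using $\min_\tau\deg\tau\ge 1$ on the left and $h(\Sigma_d)\le 1$ (hence $h(\Sigma_d)^p\le h(\Sigma_d)$, exactly as invoked in the proof of Proposition \ref{pro:rough-Cheeger}) on the right, produces the asserted inequality. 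The argument is entirely routine: the only points to watch are that the scalar norm-comparisons must be pushed through the quotient-norm infima — which is immediate since they are monotone and uniform in the representative — and the mild weakening of the right-hand side from $\vol(\Sigma_d)^{p-1}h(\Sigma_d)^p$ to $\vol(\Sigma_d)^{p-1}h(\Sigma_d)$ via $h(\Sigma_d)\le 1$. There is no genuine obstacle beyond this bookkeeping; alternatively one could run the same comparison directly on the min-max eigenvalues $\lambda_k$ from \eqref{plap1} and \eqref{plap1+} and then set $k=I_d$, mirroring \eqref{eq:m-lambda-k-up}.
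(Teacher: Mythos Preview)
Your proof is correct and follows essentially the same norm-comparison route that the paper intends (the paper's own proof is just ``very similar to Proposition~\ref{pro:rough-Cheeger}''). The only cosmetic difference is that you work directly with the quotient Rayleigh minimum---which is precisely the definition of $\lambda_{I_d}(\Delta^{up}_{d,p})$ given just above the proposition---whereas Proposition~\ref{pro:rough-Cheeger} phrases the comparison via the genus min-max characterizations \eqref{plap1}, \eqref{plap1+}; you correctly note this alternative at the end, and the two are equivalent here.
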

\begin{proof}
     The proof is easy and  very similar to Proposition \ref{pro:rough-Cheeger}.
\end{proof}

\begin{theorem}\label{thm:p-lap-Cheeger}
Let $M$ be an  orientable, compact, closed Riemannian manifold of dimension $(d+1)$. 
There exists a constant $C$ such that for all simplicial complexes $\Sigma$ that are combinatorially equivalent to some given uniform triangulations of $M$,
$$ \frac{h^p(\Sigma_d)}{C}\le \lambda_{I_{d}}(\Delta_{d,p}^{up}) \le C\cdot h^{p-1}(\Sigma_d). $$
\end{theorem}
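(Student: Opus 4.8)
\emph{Strategy.} The plan is to transport the whole $2$-Laplacian argument behind Theorem~\ref{thm:Cheeger-manifold-complex} to the $p$-Laplacian setting, replacing the up $2$-Laplacian with the \emph{down} $p^*$-Laplacian of the dual graph and invoking the nonlinear spectral duality of Proposition~\ref{pro:nonzero-p-Lap}. As a first reduction, there are only finitely many simplicial complexes on fewer than $N$ vertices, so Proposition~\ref{pro:rough-Cheeger-p} already yields a constant $C$ that works for every triangulation in the prescribed uniform family having fewer than $N$ top faces. Proposition~\ref{pro:rough-Cheeger-p} also gives $\lambda_{I_d}(\Delta^{up}_{d,p})=0\Longleftrightarrow h(\Sigma_d)=0\Longleftrightarrow \tilde H^d(M,\R)\ne0\Longleftrightarrow H_1(M)\ne0$, so the asserted inequality is vacuous when $h(\Sigma_d)=0$; hence I may assume $h(\Sigma_d)>0$, i.e.\ $\tilde H^d(M,\R)=0$ (equivalently $H_1(M)=0$ by Poincar\'e duality), and that $\Sigma$ is combinatorially equivalent to a $c$-uniform triangulation of $M$ with at least $N$ top faces.

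\emph{Reduction via duality.} Since $M$ is a closed manifold, every $d$-face lies in exactly two $(d+1)$-faces and every $(d+1)$-face has exactly $d+2$ facets; after fixing a coherent orientation, $B_{d+1}^\top B_{d+1}$ is literally the graph Laplacian of the $(d+2)$-regular dual graph $G$ on $\Sigma_{d+1}$, and more generally $\Delta^{down}_{d+1,p^*}$ is exactly the normalized $p^*$-Laplacian of $G$. I then apply Proposition~\ref{pro:nonzero-p-Lap} with the index identity $\lambda^{1/p}_{n-i}(L^{up}_{d,p})=\lambda^{1/p^*}_{m-i}(L^{down}_{d+1,p^*})$: because $\tilde H^d(M,\R)=0$ we have $\dim\ker\delta_d=I_d-1$, while $\dim\ker B_{d+1}=b_{d+1}(M)=1$ as $M$ is connected, orientable and closed, so the first nontrivial up-eigenvalue $\lambda_{I_d}$ is paired with the first nontrivial down-eigenvalue $\lambda_2$. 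Using the constant degrees ($2$ for $d$-faces, $d+2$ for $(d+1)$-faces) and $p/p^*=p-1$, this produces the exact identity
\[\lambda_{I_d}(\Delta^{up}_{d,p})=\frac{(d+2)^{p-1}}{2}\,\lambda_2\!\left(\Delta^{down}_{d+1,p^*}\right)^{p-1},\]
which for $p=2$ reduces to Claim~1 in the proof of Theorem~\ref{thm:Cheeger-manifold-complex}.

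\emph{Graph Cheeger inequality and the manifold comparison.} The graph $p^*$-Laplacian Cheeger inequality (\cite{Amghibech,GLZ22}) applied to $G$, whose Cheeger constant is $h_{down}(\Sigma_{d+1})$, gives constants $C_{p^*}\ge c_{p^*}>0$ with $c_{p^*}\,h_{down}(\Sigma_{d+1})^{p^*}\le \lambda_2(\Delta^{down}_{d+1,p^*})\le C_{p^*}\,h_{down}(\Sigma_{d+1})$. Raising to the power $p-1$ and using $p^*(p-1)=p$, the identity above becomes $\lambda_{I_d}(\Delta^{up}_{d,p})\gtrsim h_{down}(\Sigma_{d+1})^{p}$ and $\lambda_{I_d}(\Delta^{up}_{d,p})\lesssim h_{down}(\Sigma_{d+1})^{p-1}$, with implied constants depending only on $p$ and $d$. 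Finally I invoke Claim~2 in the proof of Theorem~\ref{thm:Cheeger-manifold-complex}, which concerns only the $1$-Laplacian/Cheeger constants and is therefore $p$-independent, to obtain $\tfrac1{C'}h_{down}(\Sigma_{d+1})\le h(\Sigma_d)\le C'\,h_{down}(\Sigma_{d+1})$ under $H_1(M)=0$ and $c$-uniformity. Substituting this into both estimates and collapsing all constants (depending on $p$, $d$, $c$ and $M$) into a single $C$, and combining with the bounded case, gives $\frac{h^p(\Sigma_d)}{C}\le \lambda_{I_d}(\Delta^{up}_{d,p})\le C\,h^{p-1}(\Sigma_d)$. (For $p=1$ there is nothing to prove: $\lambda_{I_d}(\Delta^{up}_{d,1})=h(\Sigma_d)$ by definition (D3) and $h(\Sigma_d)\le1$.)

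\emph{Main obstacle.} The delicate point is not a computation but the bookkeeping in the duality step: one must verify that the first nontrivial variational eigenvalue $\lambda_{I_d}(\Delta^{up}_{d,p})$ — defined just before the theorem through an orthogonality/min-min formula — really is the image under Proposition~\ref{pro:nonzero-p-Lap} of $\lambda_2$ of the down $p^*$-Laplacian of $G$; this forces one to count the trivial eigenvalues on both sides (where the hypothesis $\tilde H^d(M,\R)=0$ enters) and to track how the normalizing degrees $2$ and $d+2$ propagate through the nonlinear duality. Once this identity is secured, the asymmetric exponents $h^p$ versus $h^{p-1}$ on the two sides follow automatically from $p/p^*=p-1$ together with the two different exponents in the graph $p^*$-Cheeger inequality, and the genuinely deep ingredient — the manifold-to-graph approximation underlying $h(\Sigma_d)\sim h_{down}(\Sigma_{d+1})$ — is imported unchanged from the proof of Theorem~\ref{thm:Cheeger-manifold-complex}.
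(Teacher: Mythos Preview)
Your proposal is correct and follows essentially the same route as the paper: replace Claim~1 in the proof of Theorem~\ref{thm:Cheeger-manifold-complex} by the duality identity $\lambda_{I_d}(\Delta^{up}_{d,p})=\tfrac{(d+2)^{p-1}}{2}\lambda_2(\Delta^{down}_{d+1,p^*})^{p-1}$ (equivalently, the paper's $\big(2\lambda_{I_d}(\Delta_{d,p}^{up})\big)^{1/p}=\big((d+2)\lambda_{2}(\Delta_{d+1,p^*}^{down})\big)^{1/p^*}$), feed in the graph $p^*$-Laplacian Cheeger inequality for the dual graph $G$, and then invoke the $p$-independent Claim~2 unchanged. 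The only cosmetic difference is that the paper applies the graph $p$-Cheeger inequality first and then swaps $p\leftrightarrow p^*$, whereas you use the $p^*$-version directly; the index bookkeeping you flag as the ``main obstacle'' is handled in the paper simply by observing that Proposition~\ref{pro:nonzero-p-Lap} matches the smallest nonzero eigenvalues on both sides.
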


\begin{proof}
The proof is essentially the same to that of Theorem \ref{thm:Cheeger-manifold-complex}, with only a small difference at Claim 1 in the proof of Theorem \ref{thm:Cheeger-manifold-complex}. 
In fact, we only need to use the following claim instead of Claim 1. 

Claim: For the down Cheeger constant, for any $p>1$, we have
\[(d+2)^{p-1}\left(\frac{h_{down}}{p^*}\right)^{p}\le \lambda_{I_d}(\Delta_{d,p}^{up})\le (d+2)^{p-1}h_{down}^{p-1}\]
and in particular, when $p$ tends to $+\infty$, we have 
$\lim\limits_{p\to+\infty}\lambda_{I_d}(\Delta_{d,p}^{up})^{\frac1p}=(d+2)h_{down}$.

Proof: Since the down adjacency relation  induces a graph on $\Sigma_d$, we can directly use the Cheeger inequality for $p$-Laplacian on graphs to derive
\begin{equation}\label{eq:down-Cheeger}
 \frac{2^{p-1}h^p_{down}(\Sigma_{d+1})}{p^p}\le \lambda_{2}(\Delta_{d+1,p}^{down}) \le 2^{p-1}h_{down}(\Sigma_{d+1}) .   
\end{equation}

Since $|\Sigma|$ is a compact piecewise flat  manifold without boundary, and since the dimension of  $|\Sigma|$ is $d+1$, the normalized and unnormalized versions of $p$-Laplacian on $\Sigma$ satisfy 
$\lambda_{i}(L_{d+1,p}^{down})=(d+2)\lambda_{i}(\Delta_{d+1,p}^{down})$ and $\lambda_{i}(L_{d,p}^{up})=2\lambda_{i}(\Delta_{d,p}^{up})$ for any $i$.

Together with the spectral duality 
 $\big(\lambda_{I_d}(L_{d,p^*}^{up})\big)^{\frac{1}{p^*}}=\big(\lambda_{2}(L_{d+1,p}^{down})\big)^{\frac1p}$ derived by Proposition \ref{pro:nonzero-p-Lap}, 
we immediately obtain the duality equality
\begin{equation}\label{eq:dual-equality}\left(2\lambda_{I_d}(\Delta_{d,p^*}^{up})\right)^{\frac{1}{p^*}}=\left((d+2)\lambda_{2}(\Delta_{d+1,p}^{down})\right)^{\frac1p}.\end{equation} Then substituting the duality equality \eqref{eq:dual-equality}  into the above down  Cheeger inequality \eqref{eq:down-Cheeger}, we finally deduce that 
\[(d+2)^{p^*-1}\left(\frac{h_{down}}{p}\right)^{p^*}\le \lambda_{I_d}(\Delta_{d,p^*}^{up})\le (d+2)^{p^*-1}h_{down}^{p^*-1}\]
The proof of the claim is completed by exchanging the positions of $p$ and $p^*$.

Finally, combining the above claim with Claim 2 in the proof of Theorem \ref{thm:Cheeger-manifold-complex}, we derive the desired Cheeger-type inequality stated in Theorem \ref{thm:p-lap-Cheeger}. 
\end{proof}

 \section{Discussions and Remarks}

In the present paper, we have  established several higher-dimensional analogues of the Cheeger inequality from different perspectives.

Although these results appear to be formally similar to the classical one, there are several key differences in both the results and the approaches, as we shall now discuss. 

In Proposition 2.1, there are some factors like $|\Sigma_d|$ and $\vol(\Sigma_d)$, which don't appear in the usual Cheeger inequality. 
These factors are not sharp, but they indeed show that the first nontrivial eigenvalue of the up-Laplacian has the same vanishing condition as the Cheeger constant $h(\Sigma_d)$ we introduced. 
To the best of our knowledge,  $h=h(\Sigma_d)$ is the only geometric quantity that satisfies a Cheeger-type  inequality  of the form $ch^a\le \lambda_{I_d}(L_d^{up})\le Ch^b$. 
And more importantly, when we restrict to the $d$-th up-Laplacian on triangulations of  $(d+1)$-manifolds, the factors will be universal. We present the result in Theorem \ref{thm:Cheeger-manifold-complex}, which is  the main result of this paper. 

We  highlight  that the proof of Theorem \ref{thm:Cheeger-manifold-complex} relies heavily on the  approximation theory of Cheeger cuts from refined  triangulations to manifolds. Such an  approximation theory  was  recently  developed by Trillos, Slepcev, et al \cite{{TSBLB-16},TrillosSlepcev-16,TrillosSlepcev-15,TMT20}. 
However,  having this ingredient is still not enough; in fact, our proof further  relies on a  duality argument by considering the smallest  nonzero eigenvalues of both the 1-Laplacian on a triangulation and the $\infty$-Laplacian on its dual graph, respectively. 
 Such a nonlinear spectral duality is similar to the linear case on the relation between up- and down- Laplacians on simplicial complexes, but the nonlinear case requires more subtle techniques, which have  recently been discovered and established in the authors' recent work \cite{Jost/Zhang21c,TZ22+}. 

Based on the original idea of combining the modern approximation theory and our nonlinear spectral duality principle, we further obtain a $p$-Laplacian version of the Cheeger inequality on typical  simplicial complexes. 

\vspace{0.6cm}

~\\
J\"urgen Jost\\
Max Planck Institute for Mathematics in the Sciences, 
04103 Leipzig,
Germany~\\ Email address:
{\tt  jost@mis.mpg.de}
~\\

~\\Dong Zhang\\
 LMAM and School of Mathematical Sciences, 
        Peking University,  
      100871 Beijing, China
\\
Email address: 
{\tt dongzhang@math.pku.edu.cn}

\end{document}